\newtheorem{Def}{Definition}[section]
\newtheorem{lem}[Def]{Lemma}
\newtheorem{theo}[Def]{Theorem}
\newtheorem{pro}[Def]{Proposition}
\newtheorem{rem}[Def]{Remark}
\newtheorem{ex}[Def]{Example}
\newtheorem{assum}{Assumption}
\newtheorem{cor}[Def]{Corollary}
\definecolor{Green}{RGB}{0,128,0}
\newcommand{\LL}{\langle}
\newcommand{\RR}{\rangle}
\newcommand{\mcal}{\mathcal}
\newcommand{\mbb}{\mathbb}
\newcommand{\mbf}{\mathbf}
\newcommand{\ud}{\mathrm d}
\newcommand{\PD}{\partial}
\numberwithin{equation}{section}
\allowdisplaybreaks \allowdisplaybreaks[4]
\begin{document}

\title[Asymptotic error distribution for SPDE]{Asymptotic error distribution of accelerated exponential Euler method for parabolic  SPDEs}

\author{Jialin Hong}
\address{LSEC, ICMSEC, Academy of Mathematics and Systems Science, Chinese Academy of Sciences,    Beijing, 100190, China;
	School of Mathematical Sciences, University of Chinese Academy of Sciences, Beijing, 100049, China}
\email{hjl@lsec.cc.ac.cn}

\author{Diancong Jin}
\address{School of Mathematics and Statistics, Huazhong University of Science and Technology, Wuhan 430074, China;
	Hubei Key Laboratory of Engineering Modeling and Scientific Computing, Huazhong University of Science and Technology, Wuhan 430074, China}
\email{jindc@hust.edu.cn (Corresponding author)}

\author{Xu Wang}
\address{LSEC, ICMSEC, Academy of Mathematics and Systems Science, Chinese Academy of Sciences,    Beijing, 100190, China;
	School of Mathematical Sciences, University of Chinese Academy of Sciences, Beijing, 100049, China}
\email{wangxu@lsec.cc.ac.cn}

\author{Guanlin Yang}
\address{LSEC, ICMSEC, Academy of Mathematics and Systems Science, Chinese Academy of Sciences, Beijing, 100190, China;
	School of Mathematical Sciences, University of Chinese Academy of Sciences, Beijing, 100049, China}
\email{yangguanlin@lsec.cc.ac.cn}

\thanks{This work is supported by the National key R\&D Program of China (No.\ 2020YFA0713701), the National Natural Science Foundation of China (No. 12201228 and No. 12288201), the Fundamental Research Funds for the Central Universities (No. 3034011102), and the CAS Project for Young Scientists in Basic Research (YSBR-087).}

\keywords{stochastic partial differential equation, accelerated exponential Euler method, normalized error process, asymptotic error distribution}

\begin{abstract}
The asymptotic error distribution of numerical methods applied to stochastic ordinary differential equations has been well studied, which characterizes the evolution pattern of the error distribution in the small step-size regime. 
It is still open for stochastic partial differential equations whether the normalized error process of numerical methods admits a nontrivial limit distribution.
We answer this question by presenting the asymptotic error distribution of the temporal accelerated exponential Euler (AEE) method when applied to parabolic  stochastic partial differential equations.  In order to overcome the difficulty caused by the infinite-dimensional setting,  we establish a uniform approximation theorem for convergence in distribution.  Based on it, we derive the limit distribution of the normalized error process of the AEE method by studying the limit distribution of its certain appropriate finite-dimensional approximation process. As applications of our main result, the asymptotic error distribution of a fully discrete AEE method for the original equation and that of the AEE method for a stochastic ordinary differential equation are also obtained.
\end{abstract}

\maketitle

\textit{MSC 2020 subject classifications}:  
60B12, 60F17, 60H15, 60H35

\section{Introduction}

Concerning the numerical study for stochastic differential equations, the error analysis is crucial for assessing the accuracy and effectiveness of numerical methods. 
Most existing research focuses on the convergence analysis, particularly the strong or weak convergence
(cf. \cite{CHJ2023,HS2022,HW2019,Kloeden1992,Milstein2004,Zhang2017}).
Recognizing that the errors produced by stochastic numerical methods are stochastic processes, it is also necessary to conduct a thorough analysis of their probabilistic characteristics such as probabilistic limit theorems in the small step-size regime (cf.\ \cite{HLS2023}). This comprehensive examination is vital for a deeper insight into the behavior and performance of numerical methods within the stochastic framework.

The asymptotic error distribution of numerical methods, which is the limit distribution of the normalized error process as the step-size tends to zero, can be viewed as a kind of generalized central limit theorem. It characterizes the distribution pattern of the error process in the small step-size regime and provides the optimal convergence rate for stochastic numerical methods.
It was pioneered by \cite{Protter1991}, which gave a sufficient condition for the convergence in distribution of the error process of the Euler method when applied to globally Lipschitz stochastic ordinary differential equations (SODEs) driven by semimartingales, and was further studied by \cite{Protter1998AOP}.
For locally Lipschitz SODEs driven by Brownian motions, \cite{Protter2020SPA} provided the asymptotic error distribution of the Euler method. 
For stochastic integral equations driven by Brownian motions,  \cite{Fukasawa2023} and \cite{Nualart2023} gave the asymptotic error distribution of Euler-type methods. We also refer to \cite{HuAAP,Neuenkirch,HuBIT} for  asymptotic error distribution of numerical methods for SODEs driven by fractional Brownian motions. 
As is seen from the aforementioned references, the asymptotic error distribution of numerical methods for SODEs has been well studied. However, to the best of our knowledge, it remains unknown whether the normalized error process of a numerical method applied to stochastic partial differential equations (SPDEs) has a nonzero limit distribution. 

In this paper, we are devoted to filling the gap by studying the asymptotic error distribution of the temporal accelerated exponential Euler (AEE) method for parabolic SPDEs. 
Our main contribution is two-fold: (1) A uniform approximation theorem for convergence in distribution is established, which enables us to approximate the infinite-dimensional error process by its appropriate finite-dimensional approximation. (2) The asymptotic error distribution of the proposed method for parabolic SPDEs is presented based on the uniform approximation theorem. 

More precisely, we consider the following parabolic SPDE driven by a $Q$-Wiener process
  \begin{align}\label{SPDE}
  	\begin{cases}
  		\ud X(t)=AX(t)\ud t+F(X(t))\ud t+\ud W^Q(t), ~t\in(0,T],\\
  		X(0)=X_0\in H.
  	\end{cases}
 \end{align}
Here, $A$ is a negative definite operator which generates a strongly continuous semigroup $\{E(t)=e^{tA}\}_{t\ge 0}$ on the Hilbert space $H$, $F:H\to H$ is a nonlinear map, and $W^Q$ is an $H$-valued $Q$-Wiener process. Our main assumptions on $X_0$, $A$, $Q$ and $F$  are given in Assumptions \ref{assum1} and \ref{assum2}, under which \eqref{SPDE} admits a unique mild solution (cf.\ \cite{WangQi2015}) given by 
\begin{align}\label{mild}
	X(t)=E(t)X_0+\int_0^t E(t-s)F(X(s))\ud s+\int_0^t E(t-s)\ud W^Q(s),~ t\in[0,T].
\end{align}
The temporal AEE method was first proposed by \cite{Jentzen}:
\[
\overline{X}^m_{k+1}=e^{\tau A}\overline{X}^m_{k}+A^{-1}\left(E(\tau)-I\right)F(\overline{X}^m_{k})+\int_{t_k}^{t_{k+1}}E(t_{k+1}-s){\rm d}W^Q(s)
\] 
with the temporal step-size $\tau:=\frac{T}{m}$ for some $m\in\mbb N^+$ and $t_k=k\tau$ for $k=0,\ldots,m$.
Its time-continuous version is defined by
\begin{align}\label{AEE}
	X^m(t)=E(t)X_0+\int_0^t E(t-s)F(X^m(\kappa_m(s)))\ud s+\int_0^t E(t-s)\ud W^Q(s),~ t\in[0,T]
\end{align}
with $\kappa_m(s):=\lfloor\frac{s}{\tau}\rfloor\tau=\lfloor\frac{ms}{T}\rfloor\frac{T}{m}$ for $s\in[0,T]$, which satisfies $X^m(t_k)=\overline{X}^m_k$. Following the argument of \cite{WangQi2015}, where the convergence rate of $\{\overline{X}^m_k\}_{k\in\mbb N^+}$ is established, we show for any $t\in[0,T]$ that $X^m(t)$ converges to $X(t)$ in the mean-square sense of order $1$  for the trace class noise. To further illustrate that the first-order convergence rate is optimal and characterize the distribution pattern of the error, we define the normalized error process 
\begin{align}\label{eq:Um}
U^m(t):=m(X^m(t)-X(t)),~t\in[0,T] 
\end{align}
and study the limit distribution of $U^m(t)$ in $H$ as $m\to \infty$.

Different from the case of SODEs, the state space $H$ of \eqref{SPDE} is infinite-dimensional, which brings in some inherent difficulties.

\begin{itemize}
	\item \textit{Convergence in distribution for stochastic integrals driven by $Q$-Wiener processes.}  \\
	In the case of  SODEs driven by Brownian motions, the convergence theory in distribution for stochastic integrals driven by semimartingales plays a key role in deriving the asymptotic error distribution of numerical methods (cf.\ \cite{Fukasawa2023,Jacod1997,Protter1998AOP}). A frequently used tool is Jacod's theory of convergence in distribution of conditional  Gaussian martingales \cite{Jacod1997}, where the  limit distribution of conditional  Gaussian martingales can be provided by studying the convergence in probability of their variation processes. To the best of our knowledge, the convergence theory  in distribution of stochastic integrals driven by $Q$-Wiener processes has not been established. 
	
	\item \textit{Tightness of $\{U^m\}_{m\in\mbb N^+}$ in $\mbf C([0,T];H)$.}\\
	A basic argument in the case of SODEs for tackling the asymptotic error distribution is the tightness of the normalized error process $\{U^m\}_{m\in\mbb N^+}$ in $\mbf C([0,T];\mbb R^d)$. 
	It is usually implemented based on the Arzel\'a--Ascoli theorem and Kolmogorov continuity theorem by presenting the H\"older continuous estimate for $U^m$ in ${\mbf L^p(\Omega;\mathbb R^d)}$.
	For SPDEs, the tightness needs to be justified in $\mbf C([0,T];H)$. 
	It implies that the estimate for $U^m$ in ${\mbf L^p(\Omega;\dot{H}^\rho)}$ will be required instead with $\dot{H}^\rho:=Dom((-A)^{\rho})$ due to the infinite-dimensional setting, which is challenging for $\rho>0$.
\end{itemize}

In order to circumvent these difficulties, we establish a uniform approximation theorem for convergence in distribution (Theorem \ref{uniform approximation}) and reduce the analysis of $U^m(t)$ for any fixed $t\in[0,T]$ to that of its certain finite-dimensional approximation. 
More precisely, we first introduce an auxiliary process $\widetilde{U}^m(t)$ that has the same limit distribution as  $U^m(t)$ in $H$. 
Then a good approximation $\widetilde{U}^{m,n}(t)\in H_n:=\text{span}\{e_1,e_2,\ldots,e_n\}$, which could uniformly approximates $\widetilde{U}^m(t)$ as $n$ tends to infinity in the sense of Theorem \ref{uniform approximation}(A1), is constructed.
Here, $e_1,\ldots,e_n$ are the first $n$th eigenfunctions of $A$.
By giving the limit distribution of  $\widetilde{U}^{m,n}(t)$ with respect to $m$ in $H_n$ and then with respect to $n$ in $H$, we finally obtain that both $\widetilde{U}^m(t)$ and $U^m(t)$ converge in distribution to $U(t)$ in $H$, where $U$ is the solution of a linear SPDE (see Theorem \ref{maintheorem}). It immediately follows that the mean-square convergence rate one of the temporal AEE method for \eqref{SPDE} with trace class noise is optimal. 

As  two applications of Theorem \ref{maintheorem}, we also obtain the asymptotic error distribution for a fully discrete method based on the temporal AEE method and spatial spectral Galerkin method for \eqref{SPDE}, and that of the AEE method for SODEs with additive noise for the first time. 
We remark that the argument based on the uniform approximation theorem for convergence in distribution may also apply to other SPDEs driven by $Q$-Wiener processes.

The rest of the paper is organized as follows. Section \ref{Sec2} introduces the assumptions as well as the regularity estimates for $X$ and $X^m$. Section \ref{Sec3} presents the main result and the framework of its proof.  Then we give proofs of lemmas used to validate the main result in Section \ref{Sec4}, and two applications of the main result in Section \ref{Sec5}. 
Finally, some concluding remarks are provided in Section \ref{Sec6}.

\section{Preliminaries}\label{Sec2}
In this section, we give some basic properties of the exact solution and the numerical one, some of which are known in the literature. We begin with some notations.

For given Banach spaces  $(\mcal X,\|\cdot\|_{\mcal X})$ and $(\mcal Y,\|\cdot\|_{\mcal Y})$,  let $\mcal L(\mcal X,\mcal Y)$  be the space of bounded linear operators from $\mcal X$ to $\mcal Y$,  equipped with the usual operator norm $\|\cdot\|_{\mcal L(\mcal X,\mcal Y)}$. Especially, denote $\mcal L(\mcal X):=\mcal L(\mcal X,\mcal X)$ for short. 

Denote by $\mbf C([0,T];\mcal X)$ the space of $\mcal X$-valued continuous functions defined on $[0,T]$, endowed with the norm $\|f\|_{\mbf C([0,T];\mcal X)}:=\sup\limits_{t\in[0,T]}\|f(t)\|_{\mcal X}$. For $\alpha\in(0,1]$, denote by $\mbf C^\alpha([0,T];\mcal X)$ the space of all $\alpha$-H\"older continuous functions from $[0,T]$ to $\mcal X$ equipped with the norm $\|f\|_{\mbf C^\alpha([0,T];\mcal X)}:=\|f\|_{\mbf C([0,T];\mcal X)}+[f]_{\mbf C^\alpha([0,T];\mcal X)}$, where the semi-norm $[f]_{\mbf C^\alpha([0,T];\mcal X)}:=\sup\big\{\frac{\|f(t)-f(s)\|_{\mcal X}}{|t-s|^\alpha}:t,s\in[0,T],~s\neq t\big\}$.  

Let $\big(\Omega,\mcal F,\mbf P\big)$ be a completed probability space and $\mbf E[\cdot]$ denote the expectation operator with respect to the probability measure $\mbf P$. Let $\mbf L^p(\Omega;\mcal X)$, $p\ge1$, be the space of $p$th power integrable functions $f:\Omega\to \mcal X$, endowed with the usual norm $\|f\|_{\mbf L^p(\Omega;\mcal X)}:=\big(\mbf E\|f\|^p_{\mcal X}\big)^{1/p}$.  For convenience, the range will be omitted if $\mcal X=\mbb R$, e.g., we write $\mbf L^p(\Omega):=\mbf L^p(\Omega;\mbb R)$ for short. 

Denote by $|\cdot|$ the Euclidean norm of a vector or matrix and by $\overset{d}{\Rightarrow}$ the convergence in distribution for random variables. The notation $\epsilon\ll 1$ means that $\epsilon$ is a sufficiently small positive number.
In the sequel, we use $K_{a_1,a_2,\ldots,a_l}$  to denote  some generic constant depending on  parameters $a_1,a_2,\ldots,a_l$, which may vary for each appearance. 

\subsection{Assumptions}\label{Sec2.1}
Throughout  this paper, let $(H, \LL\cdot,\cdot\RR, \|\cdot\|)$ be a separable Hilbert space. Let $\mcal L_2(H)$ stand for the space of Hilbert--Schmidt operators $\Gamma:H\to H$, equipped with the   Hilbert--Schmidt norm $\|\Gamma\|_{\mcal L_2(H)}:=\big(\sum_{i=1}^{\infty}\|\Gamma \varphi_i\|^2\big)^{1/2}$, where $\{\varphi_i\}_{i\in\mbb N^+}$ is any orthonormal basis of $H$. If $S\in\mcal L(H)$ and $T\in\mcal L_2(H)$, then $ST\in\mcal L_2(H)$ and $\|ST\|_{\mcal L_2(H)}\le \|S\|_{\mcal L(H)}\|T\|_{\mcal L_2(H)}$. Without extra statement, we always suppose that $\{W^Q(t)\}_{t\in[0,T]}$ is a cylindrical $Q$-Wiener process on $\big(\Omega,\mcal F,\mbf P\big)$ with respect to a normal filtration $\{\mcal F_t\}_{t\in[0,T]}$, where $Q\in\mcal L(H)$ is a nonnegative and symmetric  operator with finite trace. Then, $W^Q$ has the following  expansion:
$$W^Q(t)=\sum_{i=1}^\infty Q^{\frac{1}{2}}h_i\beta_i(t),~t\in[0,T],$$
where $\{h_i\}_{i\in\mbb N^+}$ is an orthonormal basis of $H$ consisting of eigenvectors of $Q$ with corresponding eigenvalues  $\{q_k\}_{k\in\mbb N^+}$$\subseteq\mbb R$, and $\{\beta_i\}_{i\in\mbb N^+}$ is a family of independent real-valued standard Brownian motions defined on $\big(\Omega,\mcal F,\{\mcal F_t\}_{t\in[0,T]},\mbf P\big)$.

Let $(-A):Dom(A)\subseteq H\to H$ be a linear, densely defined, self-adjoint and positive definite operator, which is not necessarily bounded but with compact inverse (e.g., $A$ is the Laplace operator with homogeneous Dirichlet boundary condition). In this setting, $A$ is the infinitesimal generator of a $C_0$-semigroup of contractions $E(t)=e^{tA}$, $t\in[0,\infty)$ on $H$. In addition, there exists an increasing sequence of real numbers $\{\lambda_i\}_{i\in\mbb N^+}$ and an orthonormal basis $\{e_i\}_{i\in\mbb N^+}$ of $H$ such that $-Ae_i=\lambda_ie_i$ with $0<\lambda_1\le\lambda_2\le\cdots\le\lambda_n(\to\infty)$. For any $r\in\mbb R$, define the operator $(-A)^{\frac{r}{2}}$ by $(-A)^{\frac{r}{2}}x:=\sum_{i=1}^{\infty}\lambda_i^{\frac{r}{2}}x_ie_i$ for all 
$$x\in Dom((-A)^{\frac{r}{2}}):=\Big\{x=\sum_{i=1}^\infty x_ie_i: x_i\in\mbb R,~\|x\|_r^2:=\|(-A)^{\frac{r}{2}}x\|^2=\sum_{i=1}^\infty \lambda_i^rx_i^2<\infty \Big\}.$$
Denote $\dot{H}^r:=Dom((-A)^{\frac{r}{2}})$, which is a Hilbert space equipped with the inner product $\LL u,v\RR_{\dot{H}^r}:=\LL (-A)^{\frac{r}{2}}u,(-A)^{\frac{r}{2}}v\RR$ for $u,v\in \dot{H}^r$.  Especially, it holds that $H=\dot{H}^0$. We will frequently use the following facts (cf.\ \cite[Lemma 3.2]{Kruse2012}):
\begin{align}
		\|(-A)^rE(t)\|_{\mcal L(H)}&\le K_rt^{-r}, ~t>0, ~r\ge 0,\label{semigroup1}\\
		\|(-A)^{-\rho}(E(t)-I)\|_{\mcal L(H)}&\le K_\rho t^\rho,~t>0,~\rho\in[0,1] \label{semigroup2},
\end{align}
where both $K_r$ and $K_\rho$ are independent of $t$.

Next, we give the assumptions on $X_0$, $A$, $Q$ and $F$. For convenience, we always assume that $X_0$ is nonrandom. 	
\begin{assum}\label{assum1}
	Let $X_0\in \dot{H}^\beta$ and $A$ satisfy $\|(-A)^{\frac{\beta-1}{2}}Q^{\frac{1}{2}}\|_{\mcal L_2(H)}<\infty$ for some $\beta\in(1,2]$.
\end{assum}

\begin{assum}\label{assum2}
	The deterministic mapping $F:H\to \dot{H}^{-\eta}$ is twice Fr\'echlet differentiable for some $\eta\in[1,2)$. Furthermore, there exist $\delta\in[1,2)$, $\sigma\in[0,\beta)$ and $L>0$ such that
	\begin{align}
		&\|F(u)\|\le L(1+\|u\|),\quad \|\mcal D F(u)\|_{\mcal L(H)}\le L,\quad\forall ~u\in H, \label{Fgrow}\\
		& \|\mcal D F(u)v\|_{-\delta}\le L(1+\|u\|_1)\|v\|_{-1},\quad \forall~u\in\dot{H}^1,~v\in\dot{H}^{-1}, \label{F'}\\
		&\|\mcal D^2 F(u)(w_1,w_2)\|_{-\eta}\le L\|w_1\|\|w_2\|,\quad \forall~u,w_1,w_2\in H, \label{F''bound} \\
		&\|\mcal D^2 F(u)(w_1,w_2)-\mcal D^2 F(v)(w_1,w_2)\|_{-\eta}
		\le L\|u-v\|\|w_1\|\|w_2\|_\sigma,~\forall~ u,v,w_1\in H,~w_2\in\dot{H}^\sigma,		\label{F''continuity}
	\end{align}
	where $\mcal DF$ and $\mcal D^2 F$ denote the first and second order Fr\'echlet derivatives of $F$, respectively.
\end{assum}

It is worth mentioning that Assumption \ref{assum1} and \eqref{Fgrow}--\eqref{F''bound} in Assumption \ref{assum2} are used to derive the  mean-square convergence rate of the AEE method (cf.\ \cite{WangQi2015}).  We impose an additional condition  \eqref{F''continuity} to derive the limit distribution of  $U^m(t)$, $t\in[0,T]$.
Next, we give an example of $A$ and $F$ which satisfy Assumptions \ref{assum1} and \ref{assum2}.
\begin{ex}\label{ex1}
	Let  $H=\mbf L^2((0,1)^d)$ with $d=1,2,3$, and $A=\Delta=\sum_{i=1}^d\frac{\PD^2}{\PD x_i^2}$ be the Laplace operator with the homogeneous Dirichlet boundary condition such that $Dom(A)=H^2((0,1)^d)\cap H_0^1((0,1)^d)$. Let $F: H\to H$ be the Nemytskii operator associated with the function $f:\mbb R\to\mbb R$ through $F(u)(x)=f(u(x))$ for $u\in H$ and $x\in(0,1)^d$. Assume that $\sup_{x\in\mbb R}|f^{(i)}(x)|\le C$ for  $i=1,2,3$.
	
	By choosing a sufficiently smooth $Q$ such that $\|(-A)^{\frac{\beta-1}{2}}Q^{\frac{1}{2}}\|_{\mcal L_2(H)}<\infty$ for any $\beta\in(1,2]$, e.g., $Q=(-A)^{-\rho}$ with $\rho>0$ being sufficiently large, Assumption \ref{assum1} is satisfied. According to \cite[Example 2.2]{WangQi2015}, \eqref{Fgrow}--\eqref{F''bound} in Assumption \ref{assum2} are also satisfied provided $\delta\in[1,2)\cap(\frac{d}{2},2)$ and $\eta\in(\frac{d}{2},2)$. It then remains to verify \eqref{F''continuity}. In fact, note that $\mcal D^2 F(u)(w_1,w_2)(x)=f^{(2)}(u(x))w_1(x)w_2(x)$ for any $x\in(0,1)^d$. Letting $\sigma\in(\frac{d}{2},\beta)$ and using the Sobolev embedding $\dot{H}^\rho\hookrightarrow \mbf C([0,1]^d)$ for $\rho>\frac{d}{2}$, one has
	\begin{align*}
		&\;\|\mcal D^2 F(u)(w_1,w_2)-\mcal D^2 F(v)(w_1,w_2)\|_{-\eta}\\
		\le&\;\sup_{\|h\|\le 1}|\LL f^{(2)}(u)w_1w_2-f^{(2)}(v)w_1w_2, (-A)^{-\frac{\eta}{2}}h\RR|\\
		\le &\; \sup_{\|h\|\le 1}\|(f^{(2)}(u)-f^{(2)}(v))w_1\|_{\mbf L^1((0,1)^d)}\|w_2\|_{\mbf C([0,1]^d)}\|(-A)^{-\frac{\eta}{2}}h\|_{\mbf C([0,1]^d)}\\
		\le&\; \sup_{\|h\|\le 1}\|f^{(2)}(u)-f^{(2)}(v)\|\|w_1\|\|w_2\|_\sigma\|(-A)^{-\frac{\eta}{2}}h\|_{\eta}\\
		\le &\; K\|u-v\|\|w_1\|\|w_2\|_\sigma	
		\end{align*} 
	for $u,v,w_1\in H$ and  $w_2\in\dot{H}^\sigma$ due to the uniform boundedness of $f^{(3)}$.
\end{ex}

Under the above assumptions, one is able to gain the regularity estimates for the exact solution of \eqref{SPDE}.
We begin with the estimate on the stochastic convolution using the Burkholder--Davis--Gundy (BDG) inequality and \cite[Lemma 2.3]{WangQi2015}.
\begin{lem}\label{WQ}
	Let Assumption \ref{assum1} hold. For any $p\ge 1$ and $\gamma\in[0,\beta]$,  there exits a constant $K_T>0$ such that for $0\le s\le t\le T$, 
	\begin{align*}
		\Big\|\int_s^tE(t-r)\ud W^Q(r)\Big\|_{\mbf L^p(\Omega;\dot{H}^\gamma)}\le K_T(t-s)^{\min(\frac{\beta-\gamma}{2},\frac{1}{2})}\|(-A)^{\frac{\beta-1}{2}}Q^{\frac{1}{2}}\|_{\mcal L_2(H)}.
	\end{align*}
\end{lem}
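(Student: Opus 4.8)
The plan is to reduce every $\mbf L^p$ moment to the second one, evaluate that second moment by the It\^o isometry, and then close with an elementary scalar estimate that produces the sharp exponent $\min(\tfrac{\beta-\gamma}{2},\tfrac12)$. Write $Z(s,t):=\int_s^tE(t-r)\ud W^Q(r)$; since $r\mapsto E(t-r)$ is deterministic, $Z(s,t)$ is a centered, $\dot H^\gamma$-valued Gaussian random variable. First I would apply $(-A)^{\gamma/2}$ (which commutes with $E(\cdot)$) and use the BDG inequality together with the fact that the quadratic variation is deterministic:
\begin{equation*}
\mbf E\big\|Z(s,t)\big\|_{\dot H^\gamma}^p
=\mbf E\Big\|\int_s^t(-A)^{\gamma/2}E(t-r)\ud W^Q(r)\Big\|^p
\le K_p\Big(\int_s^t\big\|(-A)^{\gamma/2}E(t-r)Q^{1/2}\big\|_{\mcal L_2(H)}^2\ud r\Big)^{p/2}.
\end{equation*}
(Equivalently, Gaussian hypercontractivity reduces the $\mbf L^p$ norm to the $\mbf L^2$ norm directly.) It therefore suffices to bound the deterministic integral $I(s,t):=\int_s^t\|(-A)^{\gamma/2}E(t-r)Q^{1/2}\|_{\mcal L_2(H)}^2\ud r$.

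To evaluate $I(s,t)$, I would substitute $u=t-r$ and expand the Hilbert--Schmidt norm in the eigenbasis $\{e_j\}$ of $A$. Using $\|(-A)^{\gamma/2}E(u)f\|^2=\sum_j\lambda_j^\gamma e^{-2\lambda_j u}\LL f,e_j\RR^2$ and Parseval's identity to sum over an orthonormal basis gives $\|(-A)^{\gamma/2}E(u)Q^{1/2}\|_{\mcal L_2(H)}^2=\sum_j\lambda_j^\gamma e^{-2\lambda_j u}\|Q^{1/2}e_j\|^2$, whence, after integrating in $u$,
\begin{equation*}
I(s,t)=\tfrac12\sum_j\lambda_j^{\gamma-1}\|Q^{1/2}e_j\|^2\big(1-e^{-2\lambda_j(t-s)}\big),
\qquad
\big\|(-A)^{\frac{\beta-1}{2}}Q^{1/2}\big\|_{\mcal L_2(H)}^2=\sum_j\lambda_j^{\beta-1}\|Q^{1/2}e_j\|^2.
\end{equation*}
One could instead factor $(-A)^{\gamma/2}E(u)Q^{1/2}=(-A)^{\frac{\gamma-\beta+1}{2}}E(u)\,(-A)^{\frac{\beta-1}{2}}Q^{1/2}$ and combine $\|ST\|_{\mcal L_2}\le\|S\|_{\mcal L}\|T\|_{\mcal L_2}$ with \eqref{semigroup1} (this is essentially \cite[Lemma 2.3]{WangQi2015}), but that bound is lossy at $\gamma=\beta$, so I prefer the explicit series.

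Comparing the two sums reduces the whole estimate to the termwise scalar bound $\lambda^{-\theta}(1-e^{-2\lambda h})\le K_{\theta,\lambda_1,T}\,h^{\min(\theta,1)}$ for all $\lambda\ge\lambda_1>0$, where $\theta:=\beta-\gamma\ge0$ and $h:=t-s\in[0,T]$. Using $1-e^{-x}\le\min(1,x)$ and splitting into the regimes $2\lambda h\le1$ and $2\lambda h>1$, this follows from elementary monotonicity: for $\theta\le1$ both regimes yield $h^\theta$, whereas for $\theta>1$ the spectral gap $\lambda\ge\lambda_1$ controls the factor $\lambda^{1-\theta}$ and the bound $h\le T$ upgrades $h^\theta$ to $h$. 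Multiplying by $\lambda_j^{\beta-1}\|Q^{1/2}e_j\|^2$, summing in $j$, and inserting the result into Step~1 gives $I(s,t)\le K_T(t-s)^{\min(\beta-\gamma,1)}\|(-A)^{\frac{\beta-1}{2}}Q^{1/2}\|_{\mcal L_2(H)}^2$, and taking the $p$-th root yields the claim since $\tfrac12\min(\beta-\gamma,1)=\min(\tfrac{\beta-\gamma}{2},\tfrac12)$. The main obstacle is exactly this sharp exponent: it encodes the competition between low modes ($\lambda h\lesssim1$, where smoothing dominates) and high modes ($\lambda h\gtrsim1$, where dissipation dominates), and it is most delicate at the endpoint $\gamma=\beta$, where naive operator-norm control of $(-A)^{1/2}E(u)$ produces a non-integrable $u^{-1}$ singularity; the positivity of $\lambda_1$ and the finiteness of $T$ are what save the regime $\theta>1$.
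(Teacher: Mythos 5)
Your proof is correct, and its first step coincides with the paper's: the paper proves this lemma only by invoking the BDG inequality to reduce to the deterministic integral $\int_s^t\|(-A)^{\gamma/2}E(t-r)Q^{1/2}\|_{\mcal L_2(H)}^2\ud r$ and then citing \cite[Lemma 2.3]{WangQi2015} for the bound on that integral. Where you differ is that you prove the deterministic estimate from scratch by expanding in the eigenbasis of $A$, computing $I(s,t)=\tfrac12\sum_j\lambda_j^{\gamma-1}\|Q^{1/2}e_j\|^2(1-e^{-2\lambda_j(t-s)})$ exactly, and closing with the termwise bound $\lambda^{-\theta}(1-e^{-2\lambda h})\le K h^{\min(\theta,1)}$ ($\theta=\beta-\gamma$), which checks out in both regimes $2\lambda h\le 1$ and $2\lambda h>1$ and correctly uses the spectral gap $\lambda_1>0$ when $\theta>1$. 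This buys you a self-contained argument and, as you note, clean coverage of the endpoint $\gamma=\beta$ where a crude factorization through $\|(-A)^{1/2}E(u)\|_{\mcal L(H)}\le Ku^{-1/2}$ would give a non-integrable singularity (the cited lemma handles this too, but your explicit series makes the mechanism transparent). The only cosmetic point is that your constant depends on $p$, $\beta$, $\gamma$ and $\lambda_1$ as well as $T$, which is consistent with the paper's generic-constant convention.
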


We next recall the following regularity estimates of the solution $X(t)$ which have been introduced in \cite[Theorem 2.4]{WangQi2015} and  \cite[Corollary 5.2]{Kruse2012}.
\begin{lem}\label{Xregularity}
	Under Assumptions \ref{assum1} and \ref{assum2}, for any $p\ge 1$ and $\gamma\le\beta$,  there exits a constant $K_T>0$ such that 
	\begin{align*}
		\sup_{t\in[0,T]}\|X(t)\|_{\mbf L^p(\Omega;\dot{H}^\beta)}&\le K_T(1+\|X_0\|_{\dot{H}^\beta}),\\
		\|X(t)-X(s)\|_{\mbf L^p(\Omega;\dot{H}^\gamma)}&\le K_T|t-s|^{\min(\frac{1}{2},\frac{\beta-\gamma}{2})},~ t,s\in[0,T].
	\end{align*}
\end{lem}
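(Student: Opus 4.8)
The plan is to establish both estimates directly from the mild solution formula \eqref{mild}, splitting $X(t)$ into its three constituent terms: the semigroup term $E(t)X_0$, the deterministic (drift) convolution $\int_0^t E(t-s)F(X(s))\,\ud s$, and the stochastic convolution $\int_0^t E(t-s)\,\ud W^Q(s)$. For the uniform regularity bound, I would control each term in $\mbf L^p(\Omega;\dot{H}^\beta)$ separately. The stochastic convolution is handled immediately by Lemma \ref{WQ} with $\gamma=\beta$, which gives a bound by $K_T\|(-A)^{\frac{\beta-1}{2}}Q^{\frac{1}{2}}\|_{\mcal L_2(H)}$, finite under Assumption \ref{assum1}. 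The semigroup term satisfies $\|E(t)X_0\|_{\dot{H}^\beta}=\|(-A)^{\frac{\beta}{2}}E(t)X_0\|\le\|(-A)^{\frac{\beta}{2}}X_0\|=\|X_0\|_{\dot{H}^\beta}$ since $E(t)$ is a contraction commuting with $(-A)^{\frac{\beta}{2}}$.

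For the drift convolution I would use the smoothing property \eqref{semigroup1} together with the linear growth bound \eqref{Fgrow}. Writing $(-A)^{\frac{\beta}{2}}E(t-s)=(-A)^{\frac{\beta}{2}}E(t-s)$ and applying \eqref{semigroup1} with $r=\frac{\beta}{2}$ gives
\begin{align*}
	\Big\|\int_0^t E(t-s)F(X(s))\,\ud s\Big\|_{\dot{H}^\beta}
	\le \int_0^t K_{\beta}(t-s)^{-\frac{\beta}{2}}\,L\big(1+\|X(s)\|\big)\,\ud s,
\end{align*}
where the time-singularity $(t-s)^{-\frac{\beta}{2}}$ is integrable precisely because $\beta<2$. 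Taking $\mbf L^p(\Omega)$ norms, applying Minkowski's integral inequality, and invoking a Gronwall-type argument (after first bounding $\sup_t\|X(t)\|_{\mbf L^p(\Omega;H)}$ by a standard a priori estimate) closes the uniform bound. The Hölder estimate follows the same decomposition: for the difference $X(t)-X(s)$ I would split the stochastic convolution into $\int_s^t E(t-r)\,\ud W^Q(r)$, controlled by Lemma \ref{WQ} at level $\gamma$, plus $\int_0^s(E(t-s)-I)E(s-r)\,\ud W^Q(r)$, controlled by combining \eqref{semigroup2} with the already-established $\dot{H}^\beta$-regularity; the deterministic terms are handled analogously using \eqref{semigroup1}, \eqref{semigroup2}, and the uniform bound, yielding the exponent $\min(\tfrac12,\tfrac{\beta-\gamma}{2})$.

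The main obstacle is the interplay between the temporal Hölder exponent and the spatial smoothness index $\gamma$: the factor $(E(t-s)-I)$ can only absorb up to one power of $(-A)$ via \eqref{semigroup2}, so one must carefully balance how much regularity to "spend" moving the increment in time against the $\dot{H}^\beta$-regularity available from the uniform estimate. This is exactly what produces the $\min$ in the exponent and is where the constraints $\gamma\le\beta$ and $\beta\le 2$ become essential. Since this result is quoted from \cite[Theorem 2.4]{WangQi2015} and \cite[Corollary 5.2]{Kruse2012}, I expect the rigorous execution of these standard but delicate factorization estimates to be the technical heart of the argument.
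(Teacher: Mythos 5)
The paper does not actually prove this lemma: it is quoted from \cite[Theorem 2.4]{WangQi2015} and \cite[Corollary 5.2]{Kruse2012}, so your proposal is a reconstruction rather than something to be matched line by line against the text. Your decomposition into the three terms of \eqref{mild}, the treatment of $E(t)X_0$, the use of Lemma \ref{WQ} for the stochastic convolution, and the splitting of the increments for the H\"older estimate are all standard and sound. There is, however, one genuine gap: your bound on the drift convolution in $\dot{H}^{\beta}$ rests on the integrability of $(t-s)^{-\beta/2}$, and you yourself write that this holds ``precisely because $\beta<2$'' --- but Assumption \ref{assum1} allows $\beta=2$, and the lemma claims the full $\dot{H}^{2}$ bound in that case. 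At the endpoint the singularity $(t-s)^{-1}$ is not integrable, so the naive smoothing estimate \eqref{semigroup1} with $r=\beta/2=1$ fails. This is not cosmetic: the paper's own proof of the analogous statement for the numerical solution (Lemma \ref{Xm-regularity}) runs exactly your argument and, for that very reason, only reaches $\dot{H}^{2-\epsilon}$ when $\beta=2$.

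To close the gap at $\beta=2$ one needs the refinement used in \cite{Kruse2012}: write
\[
\int_0^t(-A)E(t-s)F(X(s))\,\ud s=\int_0^t(-A)E(t-s)\bigl(F(X(s))-F(X(t))\bigr)\,\ud s+(I-E(t))F(X(t)),
\]
bound the second term directly via \eqref{Fgrow}, and compensate the singularity in the first term by the Lipschitz property of $F$ together with the temporal estimate $\|X(t)-X(s)\|_{\mbf L^p(\Omega;H)}\le K|t-s|^{1/2}$, which is available without circularity since the $\gamma=0$ increment estimate only requires spatial regularity up to $\dot{H}^{1}$, already delivered by your direct argument. With this bootstrap in place, the remainder of your plan --- in particular balancing \eqref{semigroup2} against the uniform $\dot{H}^{\beta}$ bound to produce the exponent $\min(\tfrac12,\tfrac{\beta-\gamma}{2})$ --- goes through.
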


\subsection{Accelerated exponential Euler method}
Based on the regularity estimate of the exact solution, in this subsection, we give the convergence and regularity results of the numerical solution $X^m$ generated by the AEE method \eqref{AEE}.

Following the same arguments as in \cite{WangQi2015}, one has that $X^m(t)$ converges to $X(t)$ with mean-square order $1$.
\begin{lem}\label{converge}
Let Assumptions \ref{assum1} and \ref{assum2} hold. Then there is $K>0$ independent of $m$ such that
$$\sup_{t\in[0,T]}\|X^m(t)-X(t)\|_{\mbf L^2(\Omega;H)}\le Km^{-1}.$$
\end{lem}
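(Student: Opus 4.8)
The plan is to exploit the fact that the additive noise cancels in the error equation, reducing the estimate to a Bochner-type integral whose randomness enters only through the arguments of $F$. Subtracting \eqref{mild} from \eqref{AEE}, the stochastic convolution and the initial term cancel, leaving
\[
e^m(t):=X^m(t)-X(t)=\int_0^t E(t-s)\big[F(X^m(\kappa_m(s)))-F(X(s))\big]\ud s.
\]
I would then split $F(X^m(\kappa_m(s)))-F(X(s))=\big[F(X^m(\kappa_m(s)))-F(X(\kappa_m(s)))\big]+\big[F(X(\kappa_m(s)))-F(X(s))\big]$ and denote the corresponding integrals by $I_1(t)$ and $I_2(t)$. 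Using the global Lipschitz bound $\|\mcal D F\|_{\mcal L(H)}\le L$ from \eqref{Fgrow} together with $\|E(t-s)\|_{\mcal L(H)}\le 1$, the first part satisfies $\|I_1(t)\|_{\mbf L^2(\Omega;H)}\le L\int_0^t\|e^m(\kappa_m(s))\|_{\mbf L^2(\Omega;H)}\ud s$, which will be the feedback term for a concluding Gronwall argument. The entire difficulty is thus to prove that the consistency error obeys $\sup_{t\in[0,T]}\|I_2(t)\|_{\mbf L^2(\Omega;H)}\le Km^{-1}$.

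For $I_2$, I would use the mild formula to write $X(s)-X(\kappa_m(s))=D_1(s)+D_2(s)+D_3(s)$ with $D_1(s)=(E(s-\kappa_m(s))-I)X(\kappa_m(s))$, $D_2(s)=\int_{\kappa_m(s)}^sE(s-r)F(X(r))\ud r$, and $D_3(s)=\int_{\kappa_m(s)}^sE(s-r)\ud W^Q(r)$, and Taylor-expand $F(X(s))-F(X(\kappa_m(s)))$ to second order around $X(\kappa_m(s))$. The second-order remainder is controlled by \eqref{F''bound}: its $\dot H^{-\eta}$-norm is bounded by $\tfrac{L}{2}\|X(s)-X(\kappa_m(s))\|^2$, so inserting $(-A)^{\eta/2}E(t-s)$ (integrable in $s$ since $\eta/2<1$, by \eqref{semigroup1}) and using the $\tfrac12$-H\"older regularity of $X$ in $H$ from Lemma \ref{Xregularity} yields an $O(m^{-1})$ contribution. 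In the first-order term $\mcal D F(X(\kappa_m(s)))(D_1+D_2+D_3)$, the $D_1$ part is handled by the smoothing estimate \eqref{F'}: one has $\|D_1(s)\|_{-1}\le Km^{-1}\|X(\kappa_m(s))\|_1$ from \eqref{semigroup2} with the exponent capped at $1$, hence $\|\mcal D F(X(\kappa_m(s)))D_1(s)\|_{-\delta}=O(m^{-1})$, and the smoothing factor $(-A)^{\delta/2}E(t-s)$ (integrable since $\delta/2<1$) closes this contribution; the $D_2$ part is directly $O(m^{-1})$ because $\|D_2(s)\|\le Km^{-1}$ by \eqref{Fgrow}.

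The main obstacle is the first-order stochastic term $J(t):=\int_0^tE(t-s)\mcal D F(X(\kappa_m(s)))D_3(s)\ud s$, for which the naive pathwise Lipschitz bound only gives $O(m^{-1/2})$, since $\|D_3(s)\|_{\mbf L^2(\Omega;H)}=O(m^{-1/2})$ by Lemma \ref{WQ}; the missing half order must come from the temporal martingale structure, not from spatial smoothing. I would apply the stochastic Fubini theorem to recast $J(t)$ as a single It\^o integral $\int_0^t\Phi_r\ud W^Q(r)$ with $\Phi_r=\int_{r}^{(\kappa_m(r)+\tau)\wedge t}E(t-s)\mcal D F(X(\kappa_m(s)))E(s-r)\ud s$. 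The key structural point is that the constraint $\kappa_m(s)\le r\le s$ forces $s\in[r,\kappa_m(r)+\tau)$, so the inner $s$-integration runs over an interval of length at most $\tau$; this same constraint yields $\kappa_m(s)\le r$, making $X(\kappa_m(s))$ measurable with respect to $\mcal F_r$, so that $\Phi_r$ is adapted and the It\^o integral is well defined. The It\^o isometry then gives $\mbf E\|J(t)\|^2=\int_0^t\mbf E\|\Phi_rQ^{\frac12}\|_{\mcal L_2(H)}^2\ud r$, and since $\|\Phi_r\|_{\mcal L(H)}\le L\tau$ while $\|\Phi_rQ^{\frac12}\|_{\mcal L_2(H)}\le\|\Phi_r\|_{\mcal L(H)}\|Q^{\frac12}\|_{\mcal L_2(H)}$, the finiteness of $\tr(Q)=\|Q^{\frac12}\|_{\mcal L_2(H)}^2$ produces $\mbf E\|J(t)\|^2\le L^2\tau^2\,\tr(Q)\,T=O(m^{-2})$, i.e.\ $\|J(t)\|_{\mbf L^2(\Omega;H)}=O(m^{-1})$. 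This is precisely the step where the trace class assumption is indispensable. Collecting the four contributions gives $\sup_{t\in[0,T]}\|I_2(t)\|_{\mbf L^2(\Omega;H)}=O(m^{-1})$, and applying Gronwall's inequality to $g(t):=\sup_{r\le t}\|e^m(r)\|_{\mbf L^2(\Omega;H)}$, together with $\|e^m(\kappa_m(s))\|_{\mbf L^2(\Omega;H)}\le g(s)$, yields $\sup_{t\in[0,T]}\|e^m(t)\|_{\mbf L^2(\Omega;H)}\le Km^{-1}$.
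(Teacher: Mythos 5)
Your proof is correct, and it follows essentially the argument the paper relies on: the paper omits the proof of Lemma \ref{converge} and simply defers to \cite{WangQi2015}, whose strategy is exactly your decomposition of the consistency error via the mild-solution increment $X(s)-X(\kappa_m(s))$, with the deterministic pieces handled by the smoothing estimates \eqref{semigroup1}--\eqref{semigroup2}, \eqref{F'} and \eqref{F''bound}, and the extra half order for the stochastic piece recovered from the martingale structure of the local noise increments $O_m(s)$. Your execution of that last step via the stochastic Fubini theorem (turning $J(t)$ into a single It\^o integral with an adapted integrand supported on an interval of length $\tau$) is the same device the paper itself uses for $\widetilde V^m$ in the proof of Lemma \ref{Imnconverge}, and is equivalent to the block-orthogonality computation ($\mbf E\LL J_{k,t},J_{l,t}\RR=0$ for $k\neq l$) used in Lemma \ref{Utildemnbounded}.
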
  

Moreover, we also show the following regularity estimates of $X^m$, which will be used in the subsequent sections.
\begin{lem}\label{Xm-regularity}
Under Assumptions \ref{assum1} and \ref{assum2}, the following estimates for $X^m$ given in \eqref{AEE} hold.
\begin{itemize}
	\item [(i)] For any $\epsilon\ll 1$, 
	$$\sup_{m\ge 1}\sup_{t\in[0,T]}\|X^m(t)\|_{\mbf L^p(\Omega;\dot{H}^{\min(\beta,2-\epsilon)})}\le K(1+\|X_0\|_{\dot{H}^{\min(\beta,2-\epsilon)}}).$$
		\item [(ii)]  For any $\epsilon\ll 1$, $\gamma\le\min(\beta,2-\epsilon)$, $p\ge 1$, and $0\le s\le t\le T $,
	$$\sup_{m\ge 1}\|X^m(t)-X^m(s)\|_{\mbf L^p(\Omega;\dot{H}^\gamma)}\le K(t-s)^{\min(\frac{1}{2},\frac{\beta-\gamma}{2},\frac{2-\epsilon-\gamma}{2})}.$$	
\end{itemize}
\end{lem}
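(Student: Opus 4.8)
The plan is to exploit the fact that the AEE scheme \eqref{AEE} shares the exact semigroup and the exact stochastic convolution with the mild solution \eqref{mild}, so that only the drift is affected by the grid projection $\kappa_m$, and that the sole effect of $\kappa_m$ is to replace $X^m(s)$ by $X^m(\kappa_m(s))$ \emph{inside} $F$; since $\|F(\cdot)\|$ is controlled by the $H$-norm through \eqref{Fgrow} and $\kappa_m(s)\le s$, the projection is harmless once a time-uniform $H$-bound is available. For part (i) I would therefore proceed in two steps. First, I establish the uniform zeroth-order bound $\sup_{m}\sup_{t}\|X^m(t)\|_{\mbf L^p(\Omega;H)}\le K(1+\|X_0\|)$: applying Minkowski's integral inequality to \eqref{AEE}, using $\|E(t)\|_{\mcal L(H)}\le 1$, the linear growth $\|F(u)\|\le L(1+\|u\|)$ from \eqref{Fgrow}, and Lemma \ref{WQ} with $\gamma=0$ to control the stochastic convolution, I arrive at $\phi(t)\le K+L\int_0^t\phi(s)\,\ud s$ with $\phi(t):=\sup_{r\le t}\|X^m(r)\|_{\mbf L^p(\Omega;H)}$ (here $\kappa_m(s)\le s$ lets me replace $\|X^m(\kappa_m(s))\|$ by $\phi(s)$), and Gronwall's inequality closes the estimate uniformly in $m$. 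Second, with $\theta:=\min(\beta,2-\epsilon)$ I bootstrap to $\dot H^\theta$: the initial term obeys $\|E(t)X_0\|_{\dot H^\theta}\le\|X_0\|_{\dot H^\theta}$ by contractivity, the stochastic convolution is bounded by Lemma \ref{WQ} with $\gamma=\theta\le\beta$, and the drift is estimated by the smoothing bound \eqref{semigroup1}, giving $\int_0^t\|(-A)^{\theta/2}E(t-s)\|_{\mcal L(H)}\|F(X^m(\kappa_m(s)))\|\,\ud s\le K\int_0^t(t-s)^{-\theta/2}(1+\sup_{m,r}\|X^m(r)\|_{\mbf L^p(\Omega;H)})\,\ud s$, which is finite precisely because $\theta/2<1$. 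This integrability threshold is exactly what forces the ceiling $2-\epsilon$ (as opposed to $\beta$ alone) in the spatial regularity of the numerical solution.

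For part (ii) I decompose the increment $X^m(t)-X^m(s)$ into the initial, drift and stochastic convolution contributions, and split each of the latter two into a ``new'' piece over $[s,t]$ and an ``old'' piece over $[0,s]$ carrying the factor $E(t-s)-I$. The initial term is handled by \eqref{semigroup2}, writing $(-A)^{\gamma/2}(E(t-s)-I)X_0=(-A)^{-(\theta-\gamma)/2}(E(t-s)-I)(-A)^{\theta/2}X_0$, which yields the exponent $\min(1,(\theta-\gamma)/2)$. The stochastic convolution increment reproduces the temporal regularity of the exact stochastic convolution and is treated exactly as in the proof of Lemma \ref{Xregularity}: the ``new'' piece is bounded by Lemma \ref{WQ} applied on $[s,t]$, while for the ``old'' piece the key is to use the It\^o isometry (and BDG for general $p$), integrate in the time variable \emph{before} estimating the increment symbol, and then invoke $|e^{-\lambda(t-s)}-1|^2\le(\lambda(t-s))^{\beta-\gamma}$ together with Assumption \ref{assum1}; this delivers the sharp exponent $\min(\tfrac12,\tfrac{\beta-\gamma}{2})$ without loss.

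The genuinely new and most delicate contribution is the drift increment. Its ``new'' piece $\int_s^t(-A)^{\gamma/2}E(t-r)F(X^m(\kappa_m(r)))\,\ud r$ is bounded by $K\int_s^t(t-r)^{-\gamma/2}\,\ud r\sim(t-s)^{(2-\gamma)/2}$ using \eqref{semigroup1} and the uniform $H$-bound from part (i). For the ``old'' piece $\int_0^s(-A)^{\gamma/2}E(s-r)(E(t-s)-I)F(X^m(\kappa_m(r)))\,\ud r$ I factor $(-A)^{\gamma/2}E(s-r)(E(t-s)-I)=(-A)^{\gamma/2+\rho}E(s-r)\cdot(-A)^{-\rho}(E(t-s)-I)$ and apply \eqref{semigroup1} and \eqref{semigroup2}, which produces $(t-s)^{\min(\rho,1)}\int_0^s(s-r)^{-(\gamma/2+\rho)}\,\ud r$; the $r$-integral converges only for $\gamma/2+\rho<1$, so the optimal choice $\rho=1-\tfrac{\gamma}{2}-\tfrac{\epsilon}{2}$ yields precisely the exponent $(2-\epsilon-\gamma)/2$. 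This endpoint restriction is the main obstacle and is exactly the mechanism that introduces the arbitrarily small loss $\epsilon$. Collecting the exponents from all four pieces and taking their minimum gives $\min(\tfrac12,\tfrac{\beta-\gamma}{2},\tfrac{2-\epsilon-\gamma}{2})$, as claimed; throughout, $\gamma<0$ causes no difficulty since the overall rate is then capped at $\tfrac12$ by the stochastic convolution, and all other pieces are at least that regular.
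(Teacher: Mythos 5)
Your proof of part (i) is essentially identical to the paper's: a Gronwall bootstrap from the uniform $\mbf L^p(\Omega;H)$ bound to $\dot H^{\min(\beta,2-\epsilon)}$ via the smoothing estimate \eqref{semigroup1}, with the integrability threshold $\theta/2<1$ forcing the $2-\epsilon$ ceiling, exactly as in the paper. For part (ii) you take a genuinely different, and more laborious, route. The paper does not decompose the increment from time $0$; instead it uses the flow property of \eqref{AEE} to write $X^m(t)=E(t-s)X^m(s)+\int_s^tE(t-r)F(X^m(\kappa_m(r)))\,\ud r+\int_s^tE(t-r)\,\ud W^Q(r)$, so that \emph{all} of your ``old'' pieces (initial datum, drift over $[0,s]$, stochastic convolution over $[0,s]$) collapse into the single term $(-A)^{\gamma/2}(E(t-s)-I)X^m(s)$, which is then estimated in one line by \eqref{semigroup2} together with the spatial regularity $\sup_m\|X^m(s)\|_{\mbf L^p(\Omega;\dot H^{c_\epsilon})}<\infty$ from part (i), yielding the exponent $(c_\epsilon-\gamma)/2=\min(\frac{\beta-\gamma}{2},\frac{2-\epsilon-\gamma}{2})$ directly; the only remaining terms are the ``new'' drift and stochastic pieces over $[s,t]$, handled as you do. Your version instead pays for the $[0,s]$ drift piece with the $\rho$-splitting $(-A)^{\gamma/2+\rho}E(s-r)\cdot(-A)^{-\rho}(E(t-s)-I)$ and an endpoint choice $\rho=1-\frac{\gamma}{2}-\frac{\epsilon}{2}$, and for the $[0,s]$ stochastic piece with an It\^o-isometry computation \`a la Lemma \ref{Xregularity}; both are correct and produce the same exponents, but the paper's restart-at-$s$ decomposition shows more transparently that the $\epsilon$-loss in (ii) is inherited entirely from the spatial regularity ceiling in (i) rather than arising independently in the temporal estimate. (Both arguments, yours and the paper's, implicitly treat $\gamma\ge0$ and dispose of $\gamma<0$ by the trivial embedding, as you note.)
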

\begin{proof}
	It follows from the Minkowski inequality, Assumption \ref{assum2} \eqref{Fgrow} and Lemma \ref{WQ} that
	\begin{align*}
		\|X^m(t)\|_{\mbf L^p(\Omega;H)}&\le \|X_0\|+L\int_0^t(1+\|X^m(\kappa_m(s))\|_{\mbf L^p(\Omega;H)})\, \ud s+	\Big\|\int_0^tE(t-r)\ud W^Q(r)\Big\|_{\mbf L^p(\Omega;H)}\\
		&\le K(1+\|X_0\|)+K\int_0^t\|X^m(\kappa_m(s))\|_{\mbf L^p(\Omega;H)}\,\ud s,
	\end{align*} 
	where the fact $\|E(t)\|_{\mcal L(H)}\le 1$ is also used. Thus, we have
	\begin{align*}
		\sup_{r\in[0,t]}\|X^m(r)\|_{\mbf L^p(\Omega;H)}\le K(1+\|X_0\|)+K\int_0^t	\sup_{r\in[0,s]}\|X^m(r)\|_{\mbf L^p(\Omega;H)}\,\ud s.
	\end{align*}
	It along with the Gronwall inequality yields
	\begin{align}\label{sec2eq1}
		\sup_{t\in[0,T]}\|X^m(t)\|_{\mbf L^p(\Omega;H)}\le K(1+\|X_0\|).
	\end{align}  
	
	Similarly, $X^m$ in $\|\cdot\|_{\beta}$-norm reads
	$$\|X^m(t)\|_\beta\le \|X_0\|_\beta+K\int_0^t\|(-A)^{\frac{\beta}{2}}E(t-s)\|_{\mcal L(H)}(1+\|X^m(\kappa_m(s))\|)\,\ud s+K\Big\|\int_0^tE(t-s)\ud W^Q(s)\Big\|_\beta,$$
	which, together with estimates \eqref{semigroup1}, \eqref{sec2eq1} and Lemma \ref{WQ}, gives 
	\begin{align}\label{sec2eq2}
	\|X^m(t)\|_{\mbf L^p(\Omega;\dot{H}^\beta)}\le& \|X_0\|_\beta+K\int_0^t(t-s)^{-\frac{\beta}{2}}(1+\|X^m(\kappa_m(s))\|_{\mbf L^p(\Omega;H)})\, \ud s+K\notag\\
	\le& \|X_0\|_\beta+K(1+\|X_0\|)\int_0^t(t-s)^{-\frac\beta2}\ud s+K.
	\end{align}
	If $\beta\in(1,2)$, it holds immediately that
	\begin{align*}
		\|X^m(t)\|_{\mbf L^p(\Omega;\dot{H}^\beta)}
		\le K(1+\|X_0\|_\beta).
	\end{align*}
	If $\beta =2$, to ensure the well-posedness of the integral in \eqref{sec2eq2}, we need to consider the estimate of $X^m$ in $\dot H^{2-\epsilon}$ for $\epsilon\ll1$ by replacing $\beta$ with $2-\epsilon$ in \eqref{sec2eq2}, and get
	$$\|X^m(t)\|_{\mbf L^p(\Omega;\dot{H}^{2-\epsilon})}\le K(1+\|X_0\|_{2-\epsilon}),$$
	which finishes the proof of (i).

	We proceed to prove (ii). Let $\epsilon\ll 1$ and $c_\epsilon=\min(\beta,2-\epsilon)$.
	For any $\gamma\in[0,c_\epsilon]$, it holds that
	\begin{align*}
		&\;\|X^m(t)-X^m(s)\|_\gamma\\
		\le&\;\|(-A)^{\frac{\gamma}{2}}(E(t-s)-I)X^m(s)\|+\int_s^t\Big\|(-A)^{\frac{\gamma}{2}}E(t-r)F(X^m(\kappa_m(r)))\Big\|\,\ud r
		+\Big\|\int_s^tE(t-r)\ud W^Q(r)\Big\|_\gamma \\
		\le & \|(-A)^{-\frac{c_\epsilon-\gamma}{2}}(E(t-s)-I)\|_{\mcal L(H)}\|X^m(s)\|_{c_\epsilon}+L\int_s^t\|(-A)^{\frac{\gamma}{2}}E(t-r)\|_{\mcal L(H)}(1+\|X^m(\kappa_m(r))\|)\,\ud r\\
		&\; +\Big\|\int_s^tE(t-r)\ud W^Q(r)\Big\|_\gamma.
	\end{align*}
Together with \eqref{semigroup1}, \eqref{semigroup2}, the estimate of $X^m$ in (i) and Lemma \ref{WQ}, we arrive at
	\begin{align*}
		\|X^m(t)-X^m(s)\|_{\mbf L^p(\Omega;\dot{H}^\gamma)}&\le K(t-s)^{\frac{c_\epsilon-\gamma}{2}}+K\int_s^t(t-r)^{-\frac{\gamma}{2}}\,\ud r+K(t-s)^{\min(\frac{\beta-\gamma}{2},\frac{1}{2})}\\
		&\le K(t-s)^{\min(\frac{1}{2},\frac{\beta-\gamma}{2},\frac{2-\epsilon-\gamma}{2},\frac{2-\gamma}{2})},
	\end{align*}
	which finishes the proof due to the fact $\frac{2-\gamma}{2}\ge \frac{\beta-\gamma}{2}$. 
\end{proof}

\begin{rem}
According to the proof of Lemma \ref{Xm-regularity}, if  assume in addition $\|F(u)\|_{\zeta}\le K(1+\|u\|_\zeta)$ for some $\zeta\in(0,1]$, then conclusions of Lemma \ref{Xm-regularity} still hold with $\epsilon=0$.
\end{rem}

\section{Main result}\label{Sec3}
In this section, we present the main result and the basic framework of its proof. 

\subsection{Statement of main result} The following theorem gives the asymptotic error distribution of the temporal AEE method \eqref{AEE} for \eqref{SPDE}.
\begin{theo}\label{maintheorem}
	Let Assumptions \ref{assum1} and \ref{assum2} hold. For any $t\in[0,T]$, the normalized error process defined in \eqref{eq:Um} satisfies $U^{m}(t)\overset{d}{\Rightarrow}U(t)$ in $H$ as $m\to\infty$. Here, $U$ solves the following linear SPDE
	\begin{align}\label{U}
		U(t)
		=&\;\int_0^tE(t-s)\mcal D F(X(s))U(s)\ud s
		-\frac{T}{2}\int_0^tE(t-s)\mcal D F(X(s))AX(s)\ud s \nonumber\\
		&\; -\frac{T}{2}\int_0^tE(t-s)\mcal D F(X(s))F(X(s))\ud s-\frac{T}{2}\int_0^tE(t-s)\mcal D F(X(s))\ud W^{Q}(s)\nonumber\\
		&\;-\frac{\sqrt{3}T}{6}\int_0^tE(t-s)\mcal D F(X(s))\ud \widetilde{W}^{Q}(s)\notag\\
		&\: -\frac{T}{4}\int_0^tE(t-s)\sum_{k=1}^\infty \mcal D^2 F(X(s))(Q^{\frac{1}{2}}h_k,Q^{\frac{1}{2}}h_k)\ud s,
	\end{align}
	where $\widetilde{W}^{Q}(t)=\sum_{k=1}^\infty Q^{\frac{1}{2}}h_k\widetilde{\beta}_k(t)$ with $\{\widetilde{\beta}_k\}_{k\in\mbb N^+}$  being a family of independent standard Brownian motions and being independent of  $\{\beta_k\}_{k\in\mbb N^+}$.
\end{theo}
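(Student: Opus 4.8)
The plan is to exploit the fact that the stochastic convolution $\int_0^tE(t-s)\ud W^Q(s)$ is common to \eqref{mild} and \eqref{AEE} and therefore cancels in the error, so that
\begin{align*}
U^m(t)=m\int_0^t E(t-s)\big[F(X^m(\kappa_m(s)))-F(X(s))\big]\ud s.
\end{align*}
Setting $\Delta^m(s):=X^m(\kappa_m(s))-X(s)$ and Taylor expanding $F$ around $X(s)$ (the third-order remainder being controlled by \eqref{F''continuity}), I would split $U^m(t)$ into a linear part $m\int_0^tE(t-s)\mcal{D}F(X(s))\Delta^m(s)\ud s$ and a quadratic part $\frac{m}{2}\int_0^tE(t-s)\mcal{D}^2F(X(s))(\Delta^m(s),\Delta^m(s))\ud s$. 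Writing $\Delta^m(s)=[X^m(\kappa_m(s))-X(\kappa_m(s))]+[X(\kappa_m(s))-X(s)]$, the first bracket turns the linear part into the self-referential term $\int_0^tE(t-s)\mcal{D}F(X(s))U^m(\kappa_m(s))\ud s$, which in the limit yields the leading term of \eqref{U} and is absorbed by the Gronwall structure of the (linear, hence well-posed) limit equation. To realise the convergence in distribution in the infinite-dimensional space $H$ at a fixed $t$, I would avoid any tightness argument in $\mbf C([0,T];H)$; instead I would pass to the auxiliary process $\widetilde U^m(t)$ obtained by discarding asymptotically negligible remainders, build its truncation $\widetilde U^{m,n}(t)\in H_n$, and invoke Theorem \ref{uniform approximation}, reducing everything to the limit of $\widetilde U^{m,n}(t)$ in $H_n$ as $m\to\infty$ followed by $n\to\infty$.

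For the remaining contributions of the linear part I would use the mild identity $X(s)=E(s-\kappa_m(s))X(\kappa_m(s))+\int_{\kappa_m(s)}^sE(s-r)F(X(r))\ud r+\int_{\kappa_m(s)}^sE(s-r)\ud W^Q(r)$ to expand $X(\kappa_m(s))-X(s)$ into a semigroup term $-(E(s-\kappa_m(s))-I)X(\kappa_m(s))$, a drift term, and a stochastic term. The first two are handled by the averaging mechanism: since on each subinterval the mean value of $m(s-\kappa_m(s))$ equals $T/2$, one has $m\int_0^tG(s)(s-\kappa_m(s))\ud s\to\frac{T}{2}\int_0^tG(s)\ud s$ for continuous $G$. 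Combined with $(E(s-\kappa_m(s))-I)\approx(s-\kappa_m(s))A$ quantified by \eqref{semigroup2}, the $\dot H^\beta$-regularity of $X$ from Lemma \ref{Xregularity}, and $\int_{\kappa_m(s)}^sE(s-r)F(X(r))\ud r\approx(s-\kappa_m(s))F(X(\kappa_m(s)))$, these two terms converge to $-\frac{T}{2}\int_0^tE(t-s)\mcal{D}F(X(s))AX(s)\ud s$ and $-\frac{T}{2}\int_0^tE(t-s)\mcal{D}F(X(s))F(X(s))\ud s$, i.e.\ the second and third terms of \eqref{U}.

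The crux is the stochastic contribution $-m\int_0^tE(t-s)\mcal{D}F(X(s))\int_{\kappa_m(s)}^sE(s-r)\ud W^Q(r)\,\ud s$. Since $\mcal{D}F(X(s))$ is not $\mcal F_{\kappa_m(s)}$-measurable, I would split $\mcal{D}F(X(s))=\mcal{D}F(X(\kappa_m(s)))+[\mcal{D}F(X(s))-\mcal{D}F(X(\kappa_m(s)))]$. For the adapted piece, after truncation to $H_n$ the integrand becomes a conditionally Gaussian martingale in finitely many coordinates, so Jacod's theory \cite{Jacod1997} applies; using the integration-by-parts identity $\int_{t_k}^{t_{k+1}}(W^Q(s)-W^Q(t_k))\ud s=\int_{t_k}^{t_{k+1}}(t_{k+1}-s)\ud W^Q(s)$ and decomposing $\int_{t_k}^{t_{k+1}}(t_{k+1}-s)\ud\beta(s)$ in $L^2$ relative to the increment $\beta(t_{k+1})-\beta(t_k)$, the correlated part has variance $\tau^3/4$ and the orthogonal part variance $\tau^3/12$; rescaling by $m$ gives the limit $-\frac{T}{2}\int_0^tE(t-s)\mcal{D}F(X(s))\ud W^Q(s)-\frac{\sqrt3 T}{6}\int_0^tE(t-s)\mcal{D}F(X(s))\ud\widetilde W^Q(s)$, the orthogonal part forcing an independent $Q$-Wiener process $\widetilde W^Q$ on an enlarged space, consistent with $(\frac{T}{2})^2+(\frac{\sqrt3 T}{6})^2=\frac{T^2}{3}$; these are the fourth and fifth terms of \eqref{U}. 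The non-adapted piece is a subtle It\^o-type correction: since $\mcal{D}F(X(s))-\mcal{D}F(X(\kappa_m(s)))\approx\mcal{D}^2F(X(\kappa_m(s)))(X(s)-X(\kappa_m(s)),\cdot)$ with $X(s)-X(\kappa_m(s))\approx W^Q(s)-W^Q(\kappa_m(s))$, this term contracts two increments, and by $\mbf E[(W^Q(s)-W^Q(\kappa_m(s)))^{\otimes2}]=(s-\kappa_m(s))\sum_kQ^{\frac12}h_k\otimes Q^{\frac12}h_k$ it converges to $-\frac{T}{2}\int_0^tE(t-s)\sum_k\mcal{D}^2F(X(s))(Q^{\frac12}h_k,Q^{\frac12}h_k)\ud s$.

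Finally, the quadratic part, whose leading stochastic content is again $\mbf E[(W^Q(s)-W^Q(\kappa_m(s)))^{\otimes2}]$ contracted by $\mcal{D}^2F(X(s))$ (controlled through \eqref{F''bound}), converges to $+\frac{T}{4}\int_0^tE(t-s)\sum_k\mcal{D}^2F(X(s))(Q^{\frac12}h_k,Q^{\frac12}h_k)\ud s$; added to the $-\frac{T}{2}$ correction above it produces the sixth term $-\frac{T}{4}\int_0^tE(t-s)\sum_k\mcal{D}^2F(X(s))(Q^{\frac12}h_k,Q^{\frac12}h_k)\ud s$ of \eqref{U}. Assembling the $H_n$-limit gives a truncation of \eqref{U}; letting $n\to\infty$ and verifying condition (A1) of Theorem \ref{uniform approximation} through the uniform-in-$m$ tail estimate $\lim_{n\to\infty}\limsup_{m\to\infty}\mbf E\|\widetilde U^m(t)-\widetilde U^{m,n}(t)\|=0$ (which follows from Lemmas \ref{WQ}, \ref{Xregularity} and \ref{Xm-regularity} together with \eqref{semigroup1}--\eqref{semigroup2}) upgrades the convergence to $U^m(t)\overset{d}{\Rightarrow}U(t)$ in $H$. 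I expect the main obstacle to be the stochastic term: establishing a $Q$-Wiener analogue of Jacod's conditional Gaussian martingale convergence that simultaneously extracts the $W^Q$-aligned and the independent $\widetilde W^Q$ components, and correctly isolating the non-adapted It\^o correction responsible for the precise coefficient $-\frac{T}{4}$; a secondary difficulty is verifying (A1) so that the $H_n$-limits transfer to $H$.
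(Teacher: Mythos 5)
Your proposal is correct in substance and shares the paper's overall architecture---pass to an auxiliary process, truncate to $H_n$, invoke Theorem \ref{uniform approximation}, treat the martingale part with Jacod's theory and the deterministic weights with the averaging identity $m\int_0^tG(s)(s-\kappa_m(s))\,\ud s\to\frac{T}{2}\int_0^tG(s)\,\ud s$ (Proposition \ref{keyprop})---but your error decomposition is genuinely different. You Taylor-expand $F(X^m(\kappa_m(s)))-F(X(s))$ around the \emph{exact} solution $X(s)$ and then expand the exact solution's one-step increment $X(\kappa_m(s))-X(s)$; as a result the stochastic term carries the non-adapted weight $\mcal DF(X(s))$, and you must extract an It\^o-type correction $-\frac{T}{2}\sum_k\mcal D^2F(\cdot)(Q^{\frac12}h_k,Q^{\frac12}h_k)$ from the decorrelation $\mcal DF(X(s))-\mcal DF(X(\kappa_m(s)))\approx\mcal D^2F(\cdot)(O_m(s),\cdot)$, which then combines with the $+\frac{T}{4}$ from your quadratic term to produce the coefficient $-\frac{T}{4}$ in \eqref{U}. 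The paper instead writes $F(X^m(\kappa_m(s)))-F(X(s))=[F(X^m(s))-F(X(s))]-[F(X^m(s))-F(X^m(\kappa_m(s)))]$ and expands the second bracket around $X^m(\kappa_m(s))$ using the \emph{numerical} increment \eqref{Xms-XmKappa}; the weight $\mcal DF(X^m(\kappa_m(s)))$ multiplying $O_m(s)$ in $B^3_m$ is then $\mcal F_{\kappa_m(s)}$-measurable, no non-adapted correction arises, and the entire $-\frac{T}{4}$ term comes from the single quadratic term $B^4_m$. Your bookkeeping ($-\frac{T}{2}+\frac{T}{4}=-\frac{T}{4}$, and $(\frac{T}{2})^2+(\frac{\sqrt3 T}{6})^2=\frac{T^2}{3}$, consistent with \eqref{sec3eq6'}) lands on the same limit; the paper's arrangement buys a cleaner argument because the delicate step of showing that the fluctuation of $\mcal D^2F(\cdot)(O_m,O_m)$ around its conditional mean is negligible (the $Z_1^{m,n},Z_2^{m,n}$ estimates in Step~4 of Lemma \ref{Imnconverge}) need only be carried out once, whereas your route requires it both for the It\^o correction and for the quadratic term.

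Two steps in your sketch are understated. First, the self-referential term $\int_0^tE(t-s)\mcal DF(X(s))U^m(\kappa_m(s))\,\ud s$ is not simply ``absorbed by the Gronwall structure'': convergence in distribution does not commute with a fixed-point equation, and the paper must prove tightness of the tuple $(\widetilde U^{m,n},I_0^{m,n},\dots,X)$ in $\mbf C([0,T];H_n)^{\otimes 6}\times\cdots$ (Lemma \ref{Imntight}), identify each component of a subsequential limit, and use uniqueness of the weak solution of \eqref{Utildeinftyn}; so functional tightness in $\mbf C([0,T];H_n)$ (though not in $\mbf C([0,T];H)$) is still required. Second, the uniform-in-$m$ estimate behind condition (A1) is the quantitative core of the proof---it rests on the a priori bound of Lemma \ref{Utildemnbounded} and the term-by-term estimates of Lemma \ref{Utildemn-Utildem}---and does not follow merely from Lemmas \ref{WQ}, \ref{Xregularity} and \ref{Xm-regularity}. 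Neither point invalidates your plan, but both demand substantial work.
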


\subsection{A uniform approximation theorem for convergence in distribution} 
Before investigating the asymptotic error distribution of the temporal AEE method \eqref{AEE} given in Theorem \ref{maintheorem}, a criterion for determining the limit distribution of a family of infinite-dimensional random fields will be required, by studying the limit distribution of its proper approximation process. The criterion is established in the
following theorem, which plays the most crucial roles in the proof of Theorem \ref{maintheorem}. 

\begin{theo}\label{uniform approximation}
	Let $(\mcal X,\rho)$ be a metric space with the metric $\rho(\cdot,\cdot)$ and $Z^m, Z^{m,n}, Z^{\infty,n}$, $Z^{\infty,\infty}$ with $m,n\in\mbb N^+$ be  $\mcal X$-valued random variables defined on $(\Omega,{\mcal F},{\mbf P})$. Assume the following conditions hold:
	\begin{itemize}
		\item [(A1)] For any bounded Lipschitz continuous function  $f:\mcal X\to\mbb R$,
		$$\lim_{n\to\infty}\sup_{m\ge 1}\big|{\mbf E}f(Z^m)-{\mbf E}f(Z^{m,n})\big|=0.$$
		
		\item [(A2)] There exists $n_0\in\mbb N^+$ such that for any $n\ge n_0$, $Z^{m,n}\overset{d}{\Rightarrow}Z^{\infty,n}$ in $\mcal X$ as $m\to\infty$.
		
		\item [(A3)]  $Z^{\infty,n}\overset{d}{\Rightarrow}Z^{\infty,\infty}$ in $\mcal X$ as $n\to\infty$.
	\end{itemize}
	Then it holds that $Z^{m}\overset{d}{\Rightarrow}Z^{\infty,\infty}$ in $\mcal X$ as $m\to\infty$.
\end{theo}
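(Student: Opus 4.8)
The plan is to reduce convergence in distribution to the convergence of expectations of bounded Lipschitz test functions, and then to run a three-term telescoping estimate in which the uniformity built into (A1) is the load-bearing ingredient. Recall that on an arbitrary metric space $(\mcal X,\rho)$ the bounded Lipschitz continuous functions form a convergence-determining class: by the portmanteau theorem, $Z^m\overset{d}{\Rightarrow}Z^{\infty,\infty}$ is equivalent to $\mbf Ef(Z^m)\to\mbf Ef(Z^{\infty,\infty})$ for every bounded Lipschitz $f:\mcal X\to\mbb R$ (one recovers the closed-set formulation of weak convergence by approximating $\mbf 1_F$ from above by the bounded Lipschitz functions $x\mapsto(1-k\,\rho(x,F))^+$, $k\in\mbb N^+$). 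I would therefore fix an arbitrary bounded Lipschitz $f$ and an arbitrary $\epsilon>0$, and aim to show $\limsup_{m\to\infty}\big|\mbf Ef(Z^m)-\mbf Ef(Z^{\infty,\infty})\big|\le\epsilon$; since $f$ and $\epsilon$ are arbitrary, this yields the claim.

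For each $n\ge n_0$ I would insert the two intermediate laws and split
\begin{align*}
\big|\mbf Ef(Z^m)-\mbf Ef(Z^{\infty,\infty})\big|
\le&\;\underbrace{\big|\mbf Ef(Z^m)-\mbf Ef(Z^{m,n})\big|}_{=:\,I_{m,n}}
+\underbrace{\big|\mbf Ef(Z^{m,n})-\mbf Ef(Z^{\infty,n})\big|}_{=:\,II_{m,n}}\\
&\;+\underbrace{\big|\mbf Ef(Z^{\infty,n})-\mbf Ef(Z^{\infty,\infty})\big|}_{=:\,III_{n}}.
\end{align*}
The order in which the two limits are taken is the crux. First I would choose $n$, independently of $m$: by (A1) we have $\sup_{m\ge1}I_{m,n}\to0$ as $n\to\infty$, and since (A3) gives $Z^{\infty,n}\overset{d}{\Rightarrow}Z^{\infty,\infty}$, the bounded continuity of $f$ yields $III_n\to0$ as $n\to\infty$. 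Hence there is some $n_*\ge n_0$ with $\sup_{m\ge1}I_{m,n_*}<\epsilon/3$ and $III_{n_*}<\epsilon/3$ simultaneously. With this single $n_*$ now frozen, (A2) gives $Z^{m,n_*}\overset{d}{\Rightarrow}Z^{\infty,n_*}$ as $m\to\infty$, so $II_{m,n_*}\to0$. Taking $\limsup_{m\to\infty}$ in the displayed inequality with $n=n_*$ then leaves $\limsup_{m\to\infty}\big|\mbf Ef(Z^m)-\mbf Ef(Z^{\infty,\infty})\big|\le\epsilon/3+0+\epsilon/3<\epsilon$, as desired.

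The only genuinely delicate point---and the reason the theorem is nontrivial---is the uniformity in $m$ in (A1). Without it, the term $I_{m,n}$ would couple the two indices, and one could not separate the limits: there would be no way to freeze a single $n_*$ controlling $I_{m,n_*}$ for \emph{all} $m$ before sending $m\to\infty$ through (A2). Everything else is standard: the reduction to bounded Lipschitz functions via the portmanteau theorem is valid on any metric space and requires no separability, and the three-$\epsilon$ bookkeeping is routine once the quantifier order above is respected. In the application to Theorem \ref{maintheorem} the space is $\mcal X=H$, so these hypotheses are comfortably met.
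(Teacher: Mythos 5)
Your proposal is correct and is essentially the paper's own argument: the same three-term telescoping decomposition, with (A1) controlling the first term uniformly in $m$, (A2) handling the middle term as $m\to\infty$ for a frozen $n$, and (A3) the last term. The only cosmetic difference is bookkeeping order (you fix $n_*$ before sending $m\to\infty$, while the paper takes $\limsup_{m\to\infty}$ first and then lets $n\to\infty$); your added justification that bounded Lipschitz functions are convergence-determining on a general metric space is a correct supplement to what the paper leaves implicit.
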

\begin{proof}
	For any bounded Lipschitz continuous function  $f:\mcal X\to\mbb R$, $m\ge1$ and $n\ge n_0$,
	\begin{align*}
		&\;\big|{\mbf E} f(Z^m)-{\mbf E}f(Z^{\infty,\infty})\big|\\
		\le &\; \sup_{m\ge 1}\big|{\mbf E} f(Z^m)-{\mbf E}f(Z^{m,n})\big|+\big|{\mbf E} f(Z^{m,n})-{\mbf E}f(Z^{\infty,n})\big|+\big|{\mbf E} f(Z^{\infty,n})-{\mbf E}f(Z^{\infty,\infty})\big|.
	\end{align*}
	Letting $m\to\infty$ in the above formula and using (A2), we obtain for $n\ge n_0$ that
	$$\limsup_{m\to\infty}\big|{\mbf E} f(Z^m)-{\mbf E}f(Z^{\infty,\infty})\big|\le \sup_{m\ge 1}\big|{\mbf E} f(Z^m)-{\mbf E}f(Z^{m,n})\big|+\big|{\mbf E} f(Z^{\infty,n})-{\mbf E}f(Z^{\infty,\infty})\big|.$$	
	Then letting $n\to\infty$ and using (A1) and (A3), it follows that $\limsup\limits_{m\to\infty}\big|{\mbf E} f(Z^m)-{\mbf E}f(Z^{\infty,\infty})\big|=0$, which completes the proof.
\end{proof}
\begin{rem}\label{rem1}
	A sufficient condition for Theorem \ref{uniform approximation} (A1) is that there exists $p\ge 1$ such that 
	\[\lim\limits_{n\to\infty}\sup\limits_{m\ge1}{\mbf E}\big(\rho(Z^m,Z^{m,n})\big)^p=0.\] 
\end{rem}

In the following, Theorem \ref{uniform approximation} will be referred to as the uniform approximation theorem for convergence in distribution.

\subsection{Proof of Theorem \ref{maintheorem}}
Utilizing Theorem \ref{uniform approximation}, in this subsection, we present the proof of the main result, i.e., deriving the limit distribution of $U^m(t)$, within the framework outlined below: (1) Define an auxiliary process $\widetilde{U}^m(t)$ that shares the same limit distribution as $U^m(t)$.  (2) Construct a finite-dimensional approximation process $\widetilde{U}^{m,n}(t)\in H_n=\text{span}\{e_1,\ldots,e_n\}$, whose distribution can uniformly approximate the distribution of $\widetilde{U}^m(t)$ with respect to $m$ in the sense of Theorem \ref{uniform approximation} (A1). (3) Prove that $\widetilde{U}^m(t)\overset{d}{\Rightarrow}U(t)$ in $H$ according to Theorem \ref{uniform approximation}, by studying the limit distribution of $\widetilde{U}^{m,n}(t)$ iteratively as $m\to\infty$ and $n\to\infty$. The expressions for $\widetilde U^m(t)$ and $\widetilde{U}^{m,n}(t)$ will be specified in later texts of this subsection.

To be more specific, we next introduce  a series of intermediate lemmas to facilitate the execution of the aforementioned procedure. The proofs of these lemmas will be postponed to Section \ref{Sec4}.
We begin with the decomposition of $U^m$ according to expressions of $X$ and $X^m$ and the Taylor formula:
\begin{align}\label{Umt1}
	&\;U^m(t)=m(X^m(t)-X(t))\nonumber\\
	=&\;m\int_{0}^tE(t-s)(F(X^m(s))-F(X(s)))\ud s-m\int_0^tE(t-s)\big(F(X^m(s))-F(X^m(\kappa_m(s)))\big)\ud s\nonumber\\
	=&\;\int_0^tE(t-s)\mcal D F(X(s))U^m(s)\ud s+R^1_m(t)-m\int_0^tE(t-s)\mcal D F(X^m(\kappa_m(s)))(X^m(s)-X^m(\kappa_m(s)))\ud s\nonumber\\
	&\;-m\int_0^tE(t-s)\int_0^1(1-\lambda)\mcal D^2 F(\Theta_m(\lambda,s))(X^m(s)-X^m(\kappa_m(s)),X^m(s)-X^m(\kappa_m(s)))\ud \lambda \ud s,
\end{align}
where $\Theta_m(\lambda,s):=X^m(\kappa_m(s))+\lambda(X^m(s)-X^m(\kappa_m(s)))$ and
$$R^1_m(t):=m\int_0^tE(t-s)\int_0^1(1-\lambda)\mcal D^2 F(X(s)+\lambda(X^m(s)-X(s)))(X^m(s)-X(s),X^m(s)-X(s))\ud \lambda \ud s.$$
Further, we have
\begin{align}\label{Xms-XmKappa}
	&\;X^m(s)-X^m(\kappa_m(s))\nonumber\\
	=&\;(E(s-\kappa_m(s))-I)X^m(\kappa_m(s))+\int_{\kappa_m(s)}^sE(s-r)F(X^m(\kappa_m(r)))\ud r+O_m(s)
\end{align}
with $O_m(s):=\int_{\kappa_m(s)}^sE(s-r)\ud W^Q(r)$, $s\in[0,T]$.
Plugging \eqref{Xms-XmKappa} into \eqref{Umt1} gives
\begin{align}\label{Umt2}
	U^m(t)=\int_0^tE(t-s)\mcal D F(X(s))U^m(s)\ud s-\sum_{i=1}^4B^i_m(t)+R^1_m(t)-R^2_m(t), 
\end{align}
where
\begin{align*}
	B^1_m(t):=&\;m\int_0^tE(t-s)\mcal D F(X^m(\kappa_m(s)))(E(s-\kappa_m(s))-I)X^m(\kappa_m(s))\ud s,\\
	B^2_m(t):=&\;m\int_0^tE(t-s)\mcal D F(X^m(\kappa_m(s)))\int_{\kappa_m(s)}^sE(s-r)F(X^m(\kappa_m(r)))\ud r\ud s,\\
	B^3_m(t):=&\;m\int_0^tE(t-s)\mcal D F(X^m(\kappa_m(s)))O_m(s)\ud s,\\
	B^4_m(t):=&\;\frac{m}{2}\int_0^tE(t-s)\mcal D^2 F(X^m(\kappa_m(s)))(O_m(s),O_m(s))\ud s,\\
	R^2_m(t):=&\;m\int_0^tE(t-s)\int_0^1(1-\lambda)\mcal D^2 F(\Theta_m(\lambda,s))(X^m(s)-X^m(\kappa_m(s)),X^m(s)-X^m(\kappa_m(s)))\ud \lambda \ud s\\
	&\;-\frac{m}{2}\int_0^tE(t-s)\mcal D^2 F(X^m(\kappa_m(s)))(O_m(s),O_m(s))\ud s.
\end{align*}

Next, we will show that $R^1_m(t)$ and $R^2_m(t)$ are negligible when considering the limit distribution of $U^m(t)$. This means that $U^m(t)$ has the same limit distribution as $\widetilde{U}^m(t)$ if either of them converges in distribution, where  $\widetilde{U}^m(t)$ is called the auxiliary process and defined as the solution of following equation
\begin{align}\label{Utildem}
	\widetilde{U}^m(t)=\int_0^tE(t-s)\mcal D F(X(s))\widetilde{U}^m(s)\ud s-\sum_{i=1}^4B^i_m(t).
\end{align}

\begin{lem}\label{euqiv}
	Let Assumptions \ref{assum1} and \ref{assum2} hold. Then for any $t\in[0,T]$, $\lim\limits_{m\to\infty}\mbf E\|U^m(t)-\widetilde{U}^m(t)\|=0$.
\end{lem}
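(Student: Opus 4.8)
The plan is to set $V^m:=U^m-\widetilde U^m$ and exploit that $V^m$ solves a linear Volterra equation whose only inhomogeneity is the pair of remainders. Subtracting \eqref{Utildem} from \eqref{Umt2} makes the $\sum_i B^i_m$ terms cancel, leaving
$$V^m(t)=\int_0^tE(t-s)\mcal D F(X(s))V^m(s)\ud s+R^1_m(t)-R^2_m(t).$$
Since $\|E(t-s)\|_{\mcal L(H)}\le 1$ and $\|\mcal D F(X(s))\|_{\mcal L(H)}\le L$ by \eqref{Fgrow}, taking norms and expectations gives $\mbf E\|V^m(t)\|\le L\int_0^t\mbf E\|V^m(s)\|\ud s+\varepsilon_m$, where $\varepsilon_m:=\sup_{t\in[0,T]}\big(\mbf E\|R^1_m(t)\|+\mbf E\|R^2_m(t)\|\big)$. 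A Gronwall argument then yields $\mbf E\|V^m(t)\|\le \varepsilon_m e^{LT}$, so the whole lemma reduces to proving $\varepsilon_m\to 0$. (The a priori finiteness of $\sup_t\mbf E\|V^m(t)\|$ for each fixed $m$, needed to run Gronwall, follows from the regularity estimates of Section \ref{Sec2}.)

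For $R^1_m$ I would move the semigroup smoothing onto the second-derivative term. Since $\mcal D^2F$ takes values in $\dot H^{-\eta}$ with $\|\mcal D^2F(u)(w_1,w_2)\|_{-\eta}\le L\|w_1\|\|w_2\|$ by \eqref{F''bound}, and $\|(-A)^{\eta/2}E(t-s)\|_{\mcal L(H)}\le K(t-s)^{-\eta/2}$ by \eqref{semigroup1}, one obtains
$$\mbf E\|R^1_m(t)\|\le Km\int_0^t(t-s)^{-\eta/2}\,\mbf E\|X^m(s)-X(s)\|^2\ud s.$$
Lemma \ref{converge} bounds $\mbf E\|X^m(s)-X(s)\|^2\le Km^{-2}$, and since $\eta\in[1,2)$ the kernel $(t-s)^{-\eta/2}$ is integrable uniformly in $t\le T$; hence $\mbf E\|R^1_m(t)\|\le Km^{-1}\to 0$ uniformly in $t$.

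The substantive step is $R^2_m$, where the two integrals must be combined so that the leading-order diagonal contribution cancels. Writing $D_m(s):=X^m(s)-X^m(\kappa_m(s))=P_m(s)+O_m(s)$ with $P_m(s):=(E(s-\kappa_m(s))-I)X^m(\kappa_m(s))+\int_{\kappa_m(s)}^sE(s-r)F(X^m(\kappa_m(r)))\ud r$, and using $\int_0^1(1-\lambda)\ud\lambda=\tfrac12$, I would expand the bilinear form and regroup $R^2_m(t)=m\int_0^tE(t-s)\int_0^1(1-\lambda)\big[I+II+III\big]\ud\lambda\ud s$ with
$$I=\mcal D^2F(\Theta_m)(P_m,P_m),\qquad II=2\mcal D^2F(\Theta_m)(P_m,O_m),$$
$$III=\big(\mcal D^2F(\Theta_m)-\mcal D^2F(X^m(\kappa_m(s)))\big)(O_m,O_m),$$
the point being that the diagonal blocks $\mcal D^2F(X^m(\kappa_m(s)))(O_m,O_m)$ cancel exactly. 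From Lemma \ref{Xm-regularity} and \eqref{semigroup2} one has $\|P_m(s)\|_{\mbf L^p(\Omega;H)}\le K\tau^{\beta'/2}$ with $\beta':=\min(\beta,2-\epsilon)>1$, while Lemma \ref{WQ} gives $\|O_m(s)\|_{\mbf L^p(\Omega;H)}\le K\tau^{1/2}$ and $\|O_m(s)\|_{\mbf L^p(\Omega;\dot H^\sigma)}\le K\tau^{\min(\frac{\beta-\sigma}2,\frac12)}$. Applying $\|(-A)^{\eta/2}E(t-s)\|_{\mcal L(H)}\le K(t-s)^{-\eta/2}$, the bound \eqref{F''bound} to $I,II$, the Lipschitz bound \eqref{F''continuity} (with $u-v=\lambda D_m(s)$) to $III$, and Hölder's inequality, yields
$$\mbf E\|R^2_m(t)\|\le Km\big(\tau^{\beta'}+\tau^{(\beta'+1)/2}+\tau^{1+\min(\frac{\beta-\sigma}2,\frac12)}\big).$$
Each exponent here is strictly larger than $1$; since $\tau=T/m$, every term has the form $KT^{\theta}m^{1-\theta}$ with $\theta>1$ and therefore tends to $0$ uniformly in $t$, giving $\sup_t\mbf E\|R^2_m(t)\|\to 0$.

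The main obstacle is precisely the treatment of $III$: the two second-derivative integrals in $R^2_m$ are individually $O(1)$ after multiplication by $m$, so I must retain the exact cancellation of the diagonal $O_m\otimes O_m$ block and control only the change of base point $\mcal D^2F(\Theta_m)-\mcal D^2F(X^m(\kappa_m(s)))$ evaluated on $(O_m,O_m)$. This is exactly where the extra continuity hypothesis \eqref{F''continuity} is used, converting the base-point difference into the factor $\|D_m(s)\|\,\|O_m(s)\|\,\|O_m(s)\|_\sigma$, whose $\dot H^\sigma$-regularity of the stochastic convolution supplies the missing power of $\tau$. Everything else is routine semigroup smoothing combined with the moment bounds of Lemmas \ref{WQ}, \ref{converge} and \ref{Xm-regularity}.
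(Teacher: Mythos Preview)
Your proof is correct and follows essentially the same strategy as the paper's: bound $R^1_m$ via \eqref{F''bound}, \eqref{semigroup1} and Lemma \ref{converge}, bound $R^2_m$ by cancelling the dominant $O_m\otimes O_m$ contribution and invoking \eqref{F''continuity} for the base-point difference, then close with Gronwall.

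The only difference is cosmetic, in how $R^2_m$ is regrouped. The paper writes $R^2_m=R^{2,1}_m+R^{2,2}_m$ where $R^{2,1}_m$ carries the full base-point difference $\mcal D^2F(\Theta_m)-\mcal D^2F(X^m(\kappa_m(s)))$ acting on $(D_m,D_m)$ (bounded via \eqref{F''continuity} using $\|D_m\|^2\|D_m\|_\sigma$), and $R^{2,2}_m=\tfrac m2\int E(t-s)\mcal D^2F(X^m(\kappa_m(s)))(D_m+O_m,D_m-O_m)\ud s$ exploits that $D_m-O_m=P_m$ is higher order. Your decomposition instead isolates the base-point change only on the $(O_m,O_m)$ block and treats $(P_m,P_m)$, $(P_m,O_m)$ directly with \eqref{F''bound}. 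Both yield a positive power of $\tau$ after multiplication by $m$; the resulting exponents differ slightly but are equally sufficient. Your grouping is arguably a bit cleaner since \eqref{F''continuity} is applied only where genuinely needed, whereas the paper's grouping makes the algebraic cancellation $D_m^{\otimes 2}-O_m^{\otimes 2}$ more transparent.
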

Further, we construct a family of $H_n$-valued stochastic process and show that its distribution can approximate uniformly the distribution of $\widetilde{U}^m(t)$ in the sense of Theorem \ref{uniform approximation}(A1).

Define  the projection operator $P_n: \dot{H}^\gamma\to H_n$ by $P_nv=\sum_{k=1}^n\LL v,e_k\RR e_k$, $v\in\dot{H}^\gamma$, $\gamma\ge -2$.  Define $A_n\in\mcal L(H_n)$ by $A_n:=AP_n$. Then $A_n$ generates an analytic semigroup $E_n(t)=e^{tA_n}$, $t\ge 0$ in $H_n$. Further, 
 define the operator $Q_n\in\mcal L(H)$ by $Q_nv=\sum_{k=1}^n\LL v,h_k\RR Qh_k$ and 
 $W^{Q_n}(t):=\sum_{k=1}^nQ^{\frac{1}{2}}h_k\beta_k(t)$, $t\in[0,T]$. It is easy to see that $Q_n$ is a symmetric, nonnegative definite operator on $H$ with finite trace, and
 $W^{Q_n}$ is a $Q_n$-Wiener process on $H$.
Define the process $\widetilde{U}^{m,n}(t)\in H_n$, $t\in[0,T]$ as follows:  
\begin{align}\label{Utildemn}
	\widetilde{U}^{m,n}(t)=&\int_0^t E_n(t-s)P_n\mcal D F(X(s))\widetilde{U}^{m,n}(s)\ud s \nonumber\\
	&\; -m\int_0^t E_n(t-s)P_n\mcal D F(X^m(\kappa_m(s)))(E_n(s-\kappa_m(s))-P_n)P_nX^m(\kappa_m(s))\ud s\nonumber\\
	&\;-m\int_0^t E_n(t-s)P_n\mcal D F(X^m(\kappa_m(s)))\int_{\kappa_m(s)}^sE_n(s-r)P_nF(X^m(\kappa_m(s)))\ud r \ud s\nonumber\\
	&\;-m\int_0^t E_n(t-s)P_n\mcal D F(X^m(\kappa_m(s)))O_{m,n}(s)\ud s\nonumber\\
	&\;-\frac{m}{2}\int_0^t E_n(t-s)P_n\mcal D^2 F(X^m(\kappa_m(s)))(O_{m,n}(s),O_{m,n}(s))\ud s\nonumber\\
	=:&\; I_0^{m,n}(t)-\sum_{i=1}^4I_i^{m,n}(t),
\end{align}
where $O_{m,n}(s):=\int_{\kappa_m(s)}^sE_n(s-r)P_n\ud W^{Q_n}(r)$, $s\in[0,T]$.

Then we can show that $\widetilde{U}^{m,n}(t)$ and $\widetilde{U}^m(t)$ satisfy Theorem \ref{uniform approximation}(A1).

\begin{lem}\label{Utildemn-Utildem}
	Let Assumptions \ref{assum1} and \ref{assum2} hold. Then for any $t\in[0,T]$,
	$$\lim\limits_{n\to\infty}\sup_{m\ge 1}\mbf E\|\widetilde{U}^{m,n}(t)-\widetilde{U}^m(t)\|^2=0.$$
\end{lem}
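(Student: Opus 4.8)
The plan is to set $D^{m,n}(t):=\widetilde{U}^m(t)-\widetilde{U}^{m,n}(t)$, subtract the defining equations \eqref{Utildem} and \eqref{Utildemn}, and isolate a linear Volterra equation for $D^{m,n}$. Using $E_n(t-s)P_n=E(t-s)P_n$ (both operators being diagonal in $\{e_k\}$) and inserting $\widetilde{U}^m=\widetilde{U}^{m,n}+D^{m,n}$ in the linear term, one finds
\begin{align*}
D^{m,n}(t)=\int_0^t E(t-s)P_n\mcal D F(X(s))D^{m,n}(s)\,\ud s+G^{m,n}(t),
\end{align*}
where $G^{m,n}(t)$ collects the linear projection defect $\int_0^t(I-P_n)E(t-s)\mcal D F(X(s))\widetilde{U}^m(s)\,\ud s$ together with the four differences $B^i_m(t)-I^{m,n}_i(t)$, $i=1,\dots,4$. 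Since the kernel $E(t-s)P_n\mcal D F(X(s))$ has operator norm at most $L$ uniformly in $m,n,s$, Gronwall's inequality in $\mbf L^2(\Omega;H)$ yields $\sup_{t\in[0,T]}\|D^{m,n}(t)\|_{\mbf L^2(\Omega;H)}\le e^{LT}\sup_{t\in[0,T]}\|G^{m,n}(t)\|_{\mbf L^2(\Omega;H)}$. The whole problem thus reduces to showing $\sup_{m\ge1}\sup_{t\in[0,T]}\|G^{m,n}(t)\|_{\mbf L^2(\Omega;H)}\to0$ as $n\to\infty$.

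For the linear defect I would use $\|(I-P_n)v\|\le\lambda_{n+1}^{-\rho/2}\|v\|_\rho$ together with the smoothing estimate \eqref{semigroup1}: for $\rho\in(0,2)$,
\begin{align*}
\|(I-P_n)E(t-s)\mcal D F(X(s))\widetilde{U}^m(s)\|\le\lambda_{n+1}^{-\frac\rho2}K_\rho(t-s)^{-\frac\rho2}L\,\|\widetilde{U}^m(s)\|,
\end{align*}
so that after integrating $(t-s)^{-\rho/2}$ over $[0,t]$ and invoking the uniform bound $\sup_m\sup_s\|\widetilde{U}^m(s)\|_{\mbf L^2(\Omega;H)}<\infty$ (which follows from \eqref{Utildem} by Gronwall once the $B^i_m$ are bounded in $\mbf L^2(\Omega;H)$ uniformly in $m$), this term is $O(\lambda_{n+1}^{-\rho/2})\to0$. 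The deterministic-type differences $B^1_m-I^{m,n}_1$ and $B^2_m-I^{m,n}_2$ I would treat by a telescoping identity that replaces, one factor at a time, the outer semigroup by $E(t-s)P_n$, the derivative $\mcal D F$ by $P_n\mcal D F$, and the inner term by its $P_n$-projection; each summand then carries an explicit factor $(I-P_n)$, hence a power $\lambda_{n+1}^{-\rho/2}$, while the prefactor $m$ is absorbed by the $O(m^{-1})$ smallness of $E(s-\kappa_m(s))-I$ (via \eqref{semigroup2}) and of the short integral over $[\kappa_m(s),s]$. These terms therefore vanish uniformly in $m$.

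The genuine difficulty is the noise-linear term $B^3_m-I^{m,n}_3$, and this is the step I expect to be the main obstacle. A direct triangle-inequality estimate fails: since $\|O_m(s)\|_{\mbf L^2(\Omega;H)}\sim\tau^{1/2}$ by Lemma \ref{WQ}, bounding $m\int_0^t\|\cdots\|\,\|O_m(s)-O_{m,n}(s)\|\,\ud s$ pointwise produces a factor $m\,\tau^{1/2}=m^{1/2}$ that diverges. The key is that the increments $O_m(s)$ over distinct subintervals $[t_j,t_{j+1})$ are built from increments of $W^Q$ over disjoint time-intervals, hence are independent; consequently, when expanding $\mbf E\|B^3_m-I^{m,n}_3\|^2$ as a double time integral, all off-diagonal (distinct-subinterval) contributions vanish and only the $m$ diagonal blocks survive. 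Each diagonal block integrates a covariance of size $O(\tau)$ over a $\tau\times\tau$ square, i.e.\ $O(\tau^3)$; summing $m=T/\tau$ of them gives $O(\tau^2)$, so the prefactor $m^2$ restores the correct $O(1)$ scaling. (Equivalently, a stochastic Fubini theorem rewrites $B^3_m$ as a single stochastic integral against $\ud W^Q$ and the Itô isometry is applied.) In the difference, the same mechanism is used after inserting $(I-P_n)$ factors and replacing $W^Q$ by $W^{Q_n}$; the surviving diagonal covariance then carries $\|E(\cdot)(I-P_n)Q^{\frac12}\|_{\mcal L_2(H)}$ and the tail $\sum_{k>n}\|E(\cdot)Q^{\frac12}h_k\|^2$, both tending to $0$ by Assumption \ref{assum1}, giving $\sup_m\|B^3_m-I^{m,n}_3\|_{\mbf L^2(\Omega;H)}\to0$.

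Finally, the quadratic term $B^4_m-I^{m,n}_4$ I would split into its conditional-mean part and a centered fluctuation. The mean part is $O(1)$ by the scaling $m\,\mbf E\|O_m(s)\|^2\sim m\tau$ (it is precisely what produces the last drift term in \eqref{U}), and its $n$-dependence reduces to the tail $\sum_{k>n}\mcal D^2 F(X^m(\kappa_m(s)))(Q^{\frac12}h_k,Q^{\frac12}h_k)$ plus $(I-P_n)$ defects, controlled via \eqref{F''bound} and \eqref{F''continuity}; the centered part is handled by the same cross-subinterval orthogonality as for $B^3$, using fourth-moment bounds on $O_m$ from Lemma \ref{WQ}. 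Collecting all pieces gives $\sup_{m\ge1}\sup_t\|G^{m,n}(t)\|_{\mbf L^2(\Omega;H)}\to0$, and the conclusion follows from the Gronwall reduction; throughout, the decisive point is the orthogonality of martingale increments across subintervals, without which the estimate for $B^3_m-I^{m,n}_3$ would diverge in $m$.
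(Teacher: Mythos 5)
Your proposal follows essentially the same route as the paper: the same Volterra/Gronwall reduction to source terms given by the $(I-P_n)$ and $Q-Q_n$ defects of each $B^i_m$, the same uniform a priori $\mbf L^2$ bound on the auxiliary process, and---decisively---the same cross-subinterval orthogonality (equivalently, the second-moment/It\^o-isometry computation) for the stochastic term $B^3_m-I_3^{m,n}$, which you correctly single out as the step where a naive triangle inequality loses a factor $m^{1/2}$. The only deviation is in $B^4_m-I_4^{m,n}$, where the paper gets away with a direct estimate using fourth moments of $O_m$ and $O_m-O_{m,n}$ (since $m\,\|O_m\|_{\mbf L^4}\|O_m-O_{m,n}\|_{\mbf L^4}$ is already $O(1)$ in $m$ and small in $n$), so your mean/fluctuation split there is correct but unnecessary.
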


The following two lemmas give the iterated limit distribution of $\widetilde{U}^{m,n}(t)$ first with respect to $m$ then with respect to $n$. 
\begin{lem}\label{Utildemn-converge}
	Let Assumptions \ref{assum1} and \ref{assum2} hold. Then for any $t\in[0,T]$ and $n\ge 1$, $\widetilde{U}^{m,n}(t)\overset{d}{\Rightarrow}\widetilde{U}^{\infty,n}(t)$ in $H_n$ (thus also in $H$) as $m\to\infty$. Here, $\widetilde{U}^{\infty,n}$ solves the following equation
	\begin{align}\label{Utildeinftyn}
		&\;\widetilde{U}^{\infty,n}(t)\nonumber\\
		=&\;\int_0^tE_n(t-s)P_n\mcal D F(X(s))\widetilde{U}^{\infty,n}(s)\ud s
		-\frac{T}{2}\int_0^tE_n(t-s)P_n\mcal D F(X(s))A_nP_nX(s)\ud s \nonumber\\
		&\; -\frac{T}{2}\int_0^tE_n(t-s)P_n\mcal D F(X(s))P_nF(X(s))\ud s-\frac{T}{2}\int_0^tE_n(t-s)P_n\mcal D F(X(s))P_n\ud W^{Q_n}(s)\nonumber\\
		&\;-\frac{\sqrt{3}T}{6}\int_0^tE_n(t-s)P_n\mcal D F(X(s))P_n\ud \widetilde{W}^{Q_n}(s)\nonumber\\
		&\; -\frac{T}{4}\int_0^tE_n(t-s)P_n\sum_{k=1}^n\mcal D^2 F(X(s))(P_nQ^{\frac{1}{2}}h_k,P_nQ^{\frac{1}{2}}h_k)\ud s,
	\end{align}
	where $\widetilde{W}^{Q_n}(t)=\sum_{k=1}^nQ^{\frac{1}{2}}h_k\widetilde{\beta}_k(t)$, $t\in[0,T]$ with $(\widetilde{\beta}_1,\ldots, \widetilde{\beta}_n)$  being an $n$-dimensional standard Brownian motion  independent of  $({\beta}_1,\ldots,{\beta}_n)$.
\end{lem}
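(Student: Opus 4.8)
The plan is to fix $n$ and study the limit distribution of $\widetilde{U}^{m,n}(t)$ as $m\to\infty$ inside the finite-dimensional space $H_n$, where convergence in distribution is governed by the classical theory of conditional Gaussian martingales and stable convergence of stochastic integrals. Since $\widetilde{U}^{m,n}$ solves a linear integral equation of the form $\widetilde{U}^{m,n}(t)=\int_0^t E_n(t-s)P_n\mcal D F(X(s))\widetilde{U}^{m,n}(s)\ud s-\sum_{i=1}^4 I_i^{m,n}(t)$, and $\widetilde{U}^{\infty,n}$ solves the analogous equation with the same linear part and a driving term built from the limits of the $I_i^{m,n}$, the heart of the matter is to identify, for each fixed $t$, the joint limit distribution of the inhomogeneous forcing terms $I_1^{m,n},\dots,I_4^{m,n}$ (together with $X$, which is already $m$-independent). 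Once this joint convergence is established in $H_n$, I would pass to the limit in the linear Volterra equation: the solution map sending the forcing datum to the solution is continuous on $\mbf C([0,T];H_n)$ (indeed Lipschitz, by a Gronwall estimate using that $E_n$ is a bounded analytic semigroup on the fixed finite-dimensional $H_n$ and $\mcal DF(X(\cdot))$ is bounded via \eqref{Fgrow}), so the continuous mapping theorem transfers convergence of the forcing to convergence of $\widetilde{U}^{m,n}(t)$.

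The main work is therefore the term-by-term analysis of $I_1^{m,n},\dots,I_4^{m,n}$. First I would treat the \emph{drift} terms $I_1^{m,n}$ and $I_2^{m,n}$, which I expect to converge in probability (hence in distribution) to deterministic-type integrals. Using \eqref{semigroup2} one writes $E_n(s-\kappa_m(s))-P_n=(s-\kappa_m(s))A_nP_n+o$, and since $s-\kappa_m(s)$ oscillates in $[0,\tau]$ with $\tau=T/m$, the factor $m(s-\kappa_m(s))$ averages to $T/2$; this is where the coefficient $-\tfrac{T}{2}$ in front of the $A_nP_nX(s)$ term in \eqref{Utildeinftyn} comes from. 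An entirely analogous averaging of $m\int_{\kappa_m(s)}^s\!\ud r$ over each subinterval produces the factor $T/2$ in the $P_nF(X(s))$ term. To make this rigorous I would invoke the regularity estimates of Lemma \ref{Xregularity} and Lemma \ref{Xm-regularity} (to replace $X^m(\kappa_m(s))$ by $X(s)$ in the slowly-varying coefficients up to an $o(1)$ error in $\mbf L^2(\Omega;H_n)$) and a Riemann-sum / ergodic-type averaging lemma for the high-frequency factor $m(s-\kappa_m(s))-\tfrac{T}{2}$.

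The genuinely stochastic and most delicate terms are $I_3^{m,n}$ and $I_4^{m,n}$, which involve the increment integral $O_{m,n}(s)=\int_{\kappa_m(s)}^s E_n(s-r)P_n\ud W^{Q_n}(r)$. For $I_3^{m,n}$, which is linear in $O_{m,n}$, I would recognize $m\int_0^t E_n(t-s)P_n\mcal DF(X(s))O_{m,n}(s)\,\ud s$ as a sequence of conditional Gaussian (in fact martingale-type) functionals and compute the limit of its conditional covariation; the scaling $m\cdot\tau^{?}$ produces a nondegenerate Gaussian limit, and the crucial point is that this limit splits into a part still adapted to $\{\beta_k\}$ (giving the $-\tfrac{T}{2}\int E_n P_n\mcal DF(X)P_n\ud W^{Q_n}$ term) and an orthogonal, asymptotically independent Gaussian part, which is represented by the fresh Brownian motions $\widetilde{W}^{Q_n}$ with the coefficient $-\tfrac{\sqrt3\,T}{6}$. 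I anticipate this splitting---and the verification that the $\widetilde\beta_k$-part is asymptotically independent of the $\beta_k$-part and jointly Gaussian---to be the principal obstacle, since it is precisely the finite-dimensional analogue of the stable-convergence mechanism for Itô integrals and must be carried out with Jacod-type covariation computations on $H_n$. For the quadratic term $I_4^{m,n}$, I would use the identity $\mcal D^2F(X^m(\kappa_m(s)))(O_{m,n},O_{m,n})=\tfrac{T}{2}\sum_k \mcal D^2F(\cdots)(P_nQ^{1/2}h_k,P_nQ^{1/2}h_k)+(\text{mean-zero fluctuation})$, i.e. a law-of-large-numbers replacement of the random quadratic form $O_{m,n}\otimes O_{m,n}$ by its conditional expectation $\tfrac{T}{2m}\sum_k P_n Q^{1/2}h_k\otimes P_nQ^{1/2}h_k$ (the factor $\tfrac{T}{2m}$ coming from $\mbf E\,O_{m,n}(s)^{\otimes2}\approx (s-\kappa_m(s))\,P_nQ_nP_n$ and the averaging $m\int_0^t\!(s-\kappa_m(s))\ud s\to \tfrac{T}{2}t$); the factor $\tfrac12$ in $I_4^{m,n}$ combined with $\tfrac{T}{2}$ yields the coefficient $-\tfrac{T}{4}$ in \eqref{Utildeinftyn}. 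The fluctuation of this quadratic form I expect to vanish in $\mbf L^2(\Omega;H_n)$ after the $m$-scaling, so it contributes nothing to the limit. Collecting the limits of all four terms and applying the continuous-mapping argument to the linear equation completes the identification of $\widetilde{U}^{\infty,n}$ as the solution of \eqref{Utildeinftyn}.
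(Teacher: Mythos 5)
Your plan coincides with the paper's proof in every essential respect: the drift terms $I_1^{m,n},I_2^{m,n}$ are handled by the same high-frequency averaging argument (the factor $m(s-\kappa_m(s))$ averaging to $T/2$), the term $I_3^{m,n}$ by Jacod's theorem on conditional Gaussian martingales with precisely the $T/2$ versus $\sqrt{3}T/6$ splitting of the limiting covariation that you describe, and $I_4^{m,n}$ by discarding an $\mbf L^2$-vanishing martingale fluctuation of the quadratic form and averaging its compensator to produce the $T/4$ term. The only minor difference is in the final assembly: where you invoke the continuous mapping theorem for the solution map of the linear Volterra equation, the paper proves tightness of the tuple $(\widetilde U^{m,n},I_0^{m,n},\ldots,I_4^{m,n},W^{Q_n},\widetilde W^{Q_n},X)$ and identifies the limit of convergent subsequences via uniqueness of the weak solution of \eqref{Utildeinftyn} --- either route must upgrade the pointwise-in-$t$ convergence of $I_1,I_2,I_4$ to joint process-level convergence, which is exactly where those tightness estimates enter.
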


\begin{lem}\label{Utildeinftyn-converge}
	Let Assumptions \ref{assum1} and \ref{assum2} hold. Then for any $t\in[0,T]$, $\widetilde{U}^{\infty,n}(t)\overset{d}{\Rightarrow}U(t)$ in $H$ as $n\to\infty$. 
\end{lem}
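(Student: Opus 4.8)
\emph{Plan.} Both $\widetilde{U}^{\infty,n}$ and $U$ are mild solutions of \emph{linear} SPDEs whose drift operator is the common multiplication operator $\mcal D F(X(s))$ (projected by $P_n$ in the finite-dimensional case), and they can be realized on the \emph{same} probability space with the \emph{same} driving noises, by taking $W^{Q_n}(t)=\sum_{k=1}^n Q^{\frac12}h_k\beta_k(t)$ and $\widetilde{W}^{Q_n}(t)=\sum_{k=1}^n Q^{\frac12}h_k\widetilde{\beta}_k(t)$ to be the mode-truncations of $W^Q$ and $\widetilde{W}^Q$. Since convergence in $\mbf L^2(\Omega;H)$ implies convergence in probability and hence in distribution, it suffices to prove $\lim_{n\to\infty}\sup_{t\in[0,T]}\|\widetilde{U}^{\infty,n}(t)-U(t)\|_{\mbf L^2(\Omega;H)}=0$. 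Throughout I will use the identities $E_n(t-s)P_n=P_nE(t-s)=E(t-s)P_n$ (both operators being diagonal in $\{e_k\}$) and $A_nP_n=AP_n$.

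\emph{Difference equation and Gronwall reduction.} Set $D^n:=\widetilde{U}^{\infty,n}-U$. Subtracting \eqref{U} from \eqref{Utildeinftyn} and inserting $\pm P_n$ in the linear term gives
\[
D^n(t)=\int_0^tE(t-s)P_n\mcal D F(X(s))D^n(s)\,\ud s+\Phi_n(t),
\]
where $\Phi_n$ gathers the \emph{operator-mismatch} contribution $-\int_0^t(I-P_n)E(t-s)\mcal D F(X(s))U(s)\,\ud s$ coming from the linear term, together with the five differences $G_i^n(t)-G_i(t)$, $i=1,\dots,5$, of the inhomogeneous terms of \eqref{Utildeinftyn} and \eqref{U}. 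Because $\|E(t-s)P_n\|_{\mcal L(H)}\le1$ and $\|\mcal D F(X(s))\|_{\mcal L(H)}\le L$ by \eqref{Fgrow}, taking the $H$-norm, then the $\mbf L^2(\Omega)$-norm, and applying the Gronwall inequality yields $\sup_{t\in[0,T]}\|D^n(t)\|_{\mbf L^2(\Omega;H)}\le e^{LT}\sup_{t\in[0,T]}\|\Phi_n(t)\|_{\mbf L^2(\Omega;H)}$. Hence everything reduces to showing $\sup_t\|\Phi_n(t)\|_{\mbf L^2(\Omega;H)}\to0$.

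\emph{Vanishing of $\Phi_n$, term by term.} The tools are: the quantitative convergence $\|(I-P_n)v\|\le\lambda_{n+1}^{-\rho/2}\|v\|_\rho$ for $v\in\dot{H}^\rho$, $\rho\ge0$; the smoothing estimate \eqref{semigroup1}; the extension of $\mcal D F$ to negative-order spaces via \eqref{F'}; the bilinear bound \eqref{F''bound}; the regularity of $X$ from Lemma~\ref{Xregularity} and the a priori bound $\sup_t\|U(t)\|_{\mbf L^2(\Omega;H)}<\infty$; and the Hilbert--Schmidt convergence $\|(I-P_n)Q^{\frac12}\|_{\mcal L_2(H)}\to0$, which follows by dominated convergence from $\tr Q=\sum_k q_k<\infty$. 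For the operator-mismatch term and for $G_2^n-G_2$ one treats $\mcal D F(X(s))U(s)$, respectively $\mcal D F(X(s))P_nF(X(s))$, as elements of $H$, uses \eqref{semigroup1} to gain that $E(t-s)$ of them lies in $\dot{H}^\theta$ for small $\theta>0$, and then exploits the projection gain $\lambda_{n+1}^{-\theta/2}$ (with $\theta$ small enough for time-integrability); the residual piece $E(t-s)\mcal D F(X(s))(I-P_n)F(X(s))$ vanishes by dominated convergence. For the noise terms $G_3^n-G_3$ and $G_4^n-G_4$, writing $\Psi(s)=E(t-s)\mcal D F(X(s))$ and $\Psi_n(s)=P_n\Psi(s)P_n$, the It\^o isometry together with the independence of the $\beta_k$ (resp.\ $\widetilde{\beta}_k$) splits the mean-square error into a truncation tail $\sum_{k>n}\int_0^t\mbf E\|\Psi(s)Q^{\frac12}h_k\|^2\,\ud s$ and a coefficient-mismatch term bounded by $\int_0^t\mbf E\|(\Psi_n(s)-\Psi(s))Q^{\frac12}\|_{\mcal L_2(H)}^2\,\ud s$, both of which tend to $0$ by dominated convergence using $\tr Q<\infty$ and $\|(I-P_n)Q^{\frac12}\|_{\mcal L_2(H)}\to0$. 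For the second-order term $G_5^n-G_5$ one estimates in $\dot{H}^{-\eta}$ via \eqref{F''bound}, controls the tail $\sum_{k>n}$ by $L\sum_{k>n}q_k\to0$, the replacement of $Q^{\frac12}h_k$ by $P_nQ^{\frac12}h_k$ by $2L\sum_{k\le n}\|(I-P_n)Q^{\frac12}h_k\|\,\|Q^{\frac12}h_k\|\le 2L(\tr Q)^{1/2}\|(I-P_n)Q^{\frac12}\|_{\mcal L_2(H)}\to0$, and the outer $(I-P_n)E(t-s)$ by smoothing $\dot{H}^{-\eta}\to\dot{H}^\theta$ and the projection gain.

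\emph{Main obstacle.} The delicate term is $G_1^n-G_1$, which contains $AX(s)$: since $X(s)\in\dot{H}^\beta$ with $\beta\in(1,2]$ only, $AX(s)$ genuinely lives in the negative-order space $\dot{H}^{\beta-2}$ when $\beta<2$, so the difference cannot be split in $H$. After reducing it (using $A_nP_n=AP_n$ and the commutation of $A$ with $P_n$) to the two pieces $(I-P_n)E(t-s)\mcal D F(X(s))AP_nX(s)$ and $E(t-s)\mcal D F(X(s))(I-P_n)AX(s)$, one must work in negative norms. The extended derivative bound \eqref{F'} places $\mcal D F(X(s))AP_nX(s)$ in $\dot{H}^{-\delta}$ uniformly in $n$ (via $\|AP_nX(s)\|_{-1}=\|P_nX(s)\|_1\le\|X(s)\|_1$), while for the second piece $\|(I-P_n)AX(s)\|_{-1}^2\le\lambda_{n+1}^{1-\beta}\|X(s)\|_\beta^2\to0$. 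One then applies $E(t-s)$ to smooth $\dot{H}^{-\delta}$ back to $H$, balancing the smoothing exponent against the projection gain $\lambda_{n+1}^{-\theta/2}$ while keeping the singularity $(t-s)^{-(\theta+\delta)/2}$ time-integrable (possible since $\delta<2$). This balancing of negative-order regularity, semigroup smoothing, and projection decay is the crux; the remaining terms are comparatively routine given the tools above.
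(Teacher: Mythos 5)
Your proposal is correct and follows essentially the same route as the paper: reduce to $\mbf L^2(\Omega;H)$ convergence via a Gronwall argument, then kill the inhomogeneous mismatch terms one by one using the projection decay $\|(-A)^{-\rho}(I-P_n)\|_{\mcal L(H)}\le\lambda_{n+1}^{-\rho}$, the smoothing estimate \eqref{semigroup1}, the negative-order bound \eqref{F'} for the $AX$ term, the It\^o isometry for the stochastic terms, and the trace-class property of $Q$. Your identification of the $AX(s)$ term as the crux, and your handling of it in $\dot H^{-\delta}$ via $\|A P_n X(s)\|_{-1}\le\|X(s)\|_1$ and $\|(I-P_n)AX(s)\|_{-1}\le\lambda_{n+1}^{-\frac{\beta-1}{2}}\|X(s)\|_{\beta}$, matches the paper's treatment of $T_1^n$ exactly.
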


\begin{proof}[Proof of Theorem \ref{maintheorem}]
	It follows from Lemmas \ref{Utildemn-Utildem}--\ref{Utildeinftyn-converge}, Remark \ref{rem1} and Theorem  \ref{uniform approximation} that $\widetilde{U}^m\overset{d}{\Rightarrow}U(t)$ as $m\to\infty$. Further, Lemma \ref{euqiv} implies that $\|\widetilde{U}^m-U^m\|$ converges to $0$ in probability. Finally, the conclusion comes from  Slutzky’s theorem  (cf.\ \cite[Theorem 13.18]{Klenke}). 
\end{proof}

\section{Proofs of Lemmas \ref{euqiv}--\ref{Utildeinftyn-converge}} \label{Sec4}
This section is devoted to validating the lemmas in Section \ref{Sec3}. 
\subsection{Proof of Lemma \ref{euqiv}}
It follows from  \eqref{F''bound} that 
\begin{align*}
	\|R^1_m(t)\|\le Km\int_0^t\|(-A)^{\frac{\eta}{2}}E(t-s)\|_{\mcal L(H)}\|X^m(s)-X(s)\|^2\ud s,
\end{align*}
which along with \eqref{semigroup1} and Lemma \ref{converge} yields
\begin{align}\label{R1m}
	\mbf E\|R^1_m(t)\|\le Km\int_0^t(t-s)^{-\frac{\eta}{2}}\mbf E\|X^m(s)-X(s)\|^2\ud s\le  Km^{-1}.
\end{align}

Noting that for any $u\in H$, $\mcal D^2 F(u)(\cdot,\cdot)$ is a bilinear operator on $H\times H$,  we decompose $R^2_m(t)$ into $R^2_m(t)=R^{2,1}_m(t)+R^{2,2}_m(t)$ with
\begin{align*}
	R^{2,1}_m(t)=&\;m\int_0^tE(t-s)\int_0^1(1-\lambda)\big[\mcal D^2 F(\Theta_m(\lambda,s))(X^m(s)-X^m(\kappa_m(s)),X^m(s)-X^m(\kappa_m(s)))\\
	&-\mcal D^2 F(X^m(\kappa_m(s)))(X^m(s)-X^m(\kappa_m(s)),X^m(s)-X^m(\kappa_m(s)))\big]\ud \lambda \ud s,\\
	R^{2,2}_m(t)=&\;\frac{m}{2}\int_0^t E(t-s)\mcal D^2 F(X^m(\kappa_m(s)))\big(X^m(s)-X^m(\kappa_m(s))+O_m(s),X^m(s)-X^m(\kappa_m(s))-O_m(s)\big)\ud s.
\end{align*}
Recall that $\Theta_m(\lambda,s)=X^m(\kappa_m(s))+\lambda(X^m(s)-X^m(\kappa_m(s)))$ and $O_m(s)=\int_{\kappa_m(s)}^sE(s-r)\ud W^{Q}(r)$. By \eqref{semigroup1} and \eqref{F''continuity}, 
\begin{align*}
	\|R^{2,1}_m(t)\|\le&\; Km\int_0^t\|(-A)^{\frac{\eta}{2}}E(t-s)\|_{\mcal L(H)}\int_0^1(1-\lambda)\|\Theta_m(\lambda,s)-X^m(\kappa_m(s))\|    \\
	&\;\cdot\|X^m(s)-X^m(\kappa_m(s))\|\|X^m(s)-X^m(\kappa_m(s))\|_\sigma \ud \lambda \ud s\\
	\le &\; Km\int_0^t(t-s)^{-\frac{\eta}{2}}\|X^m(s)-X^m(\kappa_m(s))\|^2\|X^m(s)-X^m(\kappa_m(s))\|_\sigma \ud s.
\end{align*}
Applying the H\"older inequality and Lemma \ref{Xm-regularity}(ii), one has that for $\epsilon\ll 1$,
\begin{align*}
	\mbf E\|R^{2,1}_m(t)\|&\le Km\int_0^t (t-s)^{-\frac{\eta}{2}}\|X^m(s)-X^m(\kappa_m(s))\|^2_{\mbf L^4(\Omega;H)}\|X^m(s)-X^m(\kappa_m(s))\|_{\mbf L^2(\Omega;\dot{H}^\sigma)}\ud s\\
	&\le Km^{-\min(\frac{1}{2},\frac{\beta-\sigma}{2},\frac{2-\epsilon-\sigma}{2})}.
\end{align*}

We proceed to estimate $\mbf E\|R^{2,2}_m(t)\|$. From \eqref{F''bound} it follows  that
\begin{align*}
	\|R^{2,2}_m(t)\|\le&\; Km\int_0^t\|(-A)^{\frac{\eta}{2}}E(t-s)\|_{\mcal L(H)}\big(\|X^m(s)-X^m(\kappa_m(s))\|+\|O_m(s)\|\big)\\
	&\;\cdot\|X^m(s)-X^m(\kappa_m(s))-O_m(s)\|\ud s,
\end{align*}
which, combined with \eqref{semigroup1}, \eqref{Xms-XmKappa}, the H\"older inequality, the Minkowski inequality, Lemmas \ref{Xm-regularity} (ii) and \ref{WQ}, yields
\begin{align*}
	&\;\mbf E\|R^{2,2}_m(t)\|\le Km\int_0^t(t-s)^{-\frac{\eta}{2}}\big(\|X^m(s)-X^m(\kappa_m(s))\|_{\mbf L^2(\Omega;H)}+\big(\|O_m(s)\|_{\mbf L^2(\Omega;H)}\big)\\
	&\cdot \Big(\|(E(s-\kappa_m(s))-I)X^m(\kappa_m(s))\|_{\mbf L^2(\Omega;H)}+\Big\|\int_{\kappa_m(s)}^sE(s-r)F(X^m(\kappa_m(r)))\ud r\Big\|_{\mbf L^2(\Omega;H)}\Big)\ud s\\
	\le &\; Km^{\frac{1}{2}}\int_0^t(t-s)^{-\frac{\eta}{2}}\Big(\|(E(s-\kappa_m(s))-I)X^m(\kappa_m(s))\|_{\mbf L^2(\Omega;H)}\\
	&\;\qquad\quad+\Big\|\int_{\kappa_m(s)}^sE(s-r)F(X^m(\kappa_m(r)))\ud r\Big\|_{\mbf L^2(\Omega;H)}\Big)\ud s.
\end{align*}
By \eqref{semigroup2} and Lemma \ref{Xm-regularity}(i), it holds that for $\epsilon\ll 1$,
\begin{align*}
	&\;\|(E(s-\kappa_m(s))-I)X^m(\kappa_m(s))\|_{\mbf L^2(\Omega;H)}\\
	\le&\; \|(-A)^{-\min(\frac{\beta}{2},\frac{2-\epsilon}{2})}(E(s-\kappa_m(s))-I)\|_{\mcal L(H)}\|X^m(\kappa_m(s))\|_{\mbf L^2(\Omega;\dot{H}^{\min(\beta,2-\epsilon)})}
	\le Km^{-\min(\frac{\beta}{2},\frac{2-\epsilon}{2})}. 
\end{align*}
Moreover, $\|E(t)\|_{\mcal L(H)}\le 1$, \eqref{Fgrow} and Lemma \eqref{Xm-regularity}(i) give
\begin{align*}
	\Big\|\int_{\kappa_m(s)}^sE(s-r)F(X^m(\kappa_m(r)))\ud r\Big\|_{\mbf L^2(\Omega;H)}\le L\int_{\kappa_m(s)}^m(1+\|X^m(\kappa_m(r))\|_{\mbf L^2(\Omega;H)})\ud r\le Km^{-1}.
\end{align*}
Combining the above formulas, we obtain
$\mbf E\|R^{2,2}_m(t)\|\le Km^{-\min(\frac{\beta-1}{2},\frac{1-\epsilon}{2})}$.
Consequently, for any $t\in[0,T]$ and $\epsilon\ll 1$,
\begin{align}\label{R2m}
	\mbf E\|R^2_m(t)\|\le Km^{-\min(\frac{\beta-1}{2},\frac{1-\epsilon}{2},\frac{\beta-\sigma}{2},\frac{2-\epsilon-\sigma}{2})}.
\end{align}

Finally, it follows from \eqref{Umt2}, \eqref{Utildem}, \eqref{Fgrow}, \eqref{R1m} and \eqref{R2m} that
\begin{align*}
	\mbf E\|U^m(t)-\widetilde{U}^m(t)\|\le K\int_0^t\mbf E\|U^m(s)-\widetilde{U}(s)\|\ud s+Km^{-\min(\frac{\beta-1}{2},\frac{1-\epsilon}{2},\frac{\beta-\sigma}{2},\frac{2-\epsilon-\sigma}{2})},
\end{align*}
which proves Lemma \eqref{euqiv} by means of the Gronwall inequality.  \hfill $\square$

\subsection{Poof of Lemma \ref{Utildemn-Utildem}} 
In order to prove  Lemma \ref{Utildemn-Utildem}, we need the following uniform upper bound for $\|\widetilde{U}^{m,n}(t)\|_{\mbf L^2(\Omega;H)}$.

\begin{lem}\label{Utildemnbounded}
	Under Assumptions \ref{assum1} and \ref{assum2}, we have that there exist $K_{\delta,\eta,T}>0$ such that
	$$\sup_{t\in[0,T]}\sup_{m\ge 1}\sup_{n\ge 1}\|\widetilde{U}^{m,n}(t)\|_{\mbf L^2(\Omega;H)}\le K_{\delta,\eta,T}.$$
\end{lem}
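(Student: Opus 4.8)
The plan is to estimate the four forcing terms $I_1^{m,n},\dots,I_4^{m,n}$ in \eqref{Utildemn} separately in $\mbf L^2(\Omega;H)$, uniformly in $m,n$ and $t$, and to absorb the remaining term $I_0^{m,n}$ through a Gronwall argument. Throughout I would first record the Galerkin counterparts of \eqref{semigroup1}, \eqref{semigroup2} and Lemma \ref{WQ}, namely $\|(-A_n)^rE_n(t)\|_{\mcal L(H_n)}\le K_rt^{-r}$, $\|(-A_n)^{-\rho}(E_n(t)-P_n)\|_{\mcal L(H_n)}\le K_\rho t^\rho$ and $\|O_{m,n}(s)\|_{\mbf L^p(\Omega;H)}\le K(s-\kappa_m(s))^{1/2}$, all with constants independent of $n$; these hold because the eigenvalues of $A_n$ form a subset of those of $A$, because $P_n$ is an orthogonal projection (so $\|P_n\|_{\mcal L(\dot{H}^\gamma)}\le 1$), and because $\|(-A)^{\frac{\beta-1}{2}}Q_n^{\frac12}\|_{\mcal L_2(H)}\le\|(-A)^{\frac{\beta-1}{2}}Q^{\frac12}\|_{\mcal L_2(H)}$. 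For $I_0^{m,n}$ the bounds $\|E_n(t-s)P_n\|_{\mcal L(H)}\le 1$ and \eqref{Fgrow} give $\|I_0^{m,n}(t)\|_{\mbf L^2(\Omega;H)}\le L\int_0^t\|\widetilde U^{m,n}(s)\|_{\mbf L^2(\Omega;H)}\ud s$, which is precisely the Gronwall kernel.

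For the three deterministic forcing terms the guiding principle is that each carries a prefactor $m$ that must be compensated by a factor $s-\kappa_m(s)\le T/m$. In $I_1^{m,n}$ I would apply \eqref{F'} with $v=(E_n(s-\kappa_m(s))-P_n)P_nX^m(\kappa_m(s))$, estimate $\|v\|_{-1}\le\|(-A_n)^{-1}(E_n(s-\kappa_m(s))-P_n)\|_{\mcal L(H_n)}\|X^m(\kappa_m(s))\|_1\le K(s-\kappa_m(s))\|X^m(\kappa_m(s))\|_1$ (commuting $(-A_n)^{1/2}$ across the smoothing operator), and use $\|(-A_n)^{\delta/2}E_n(t-s)\|_{\mcal L(H_n)}\le K(t-s)^{-\delta/2}$ together with $\delta\in[1,2)$ to keep $\int_0^t(t-s)^{-\delta/2}\ud s$ finite; the surviving $m\,(s-\kappa_m(s))\le T$ and Lemma \ref{Xm-regularity}(i) (bounding $\|X^m(\kappa_m(s))\|_{\mbf L^4(\Omega;\dot{H}^1)}$, legitimate since $1\le\min(\beta,2-\epsilon)$) yield the uniform bound. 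In $I_2^{m,n}$ the inner integral satisfies $\|\int_{\kappa_m(s)}^sE_n(s-r)P_nF(X^m(\kappa_m(s)))\ud r\|\le(s-\kappa_m(s))L(1+\|X^m(\kappa_m(s))\|)$ by \eqref{Fgrow}, so again $m\,(s-\kappa_m(s))\le T$ and $\|\mcal D F\|_{\mcal L(H)}\le L$ close the estimate via Lemma \ref{Xm-regularity}(i). In $I_4^{m,n}$ I would invoke \eqref{F''bound} to get $\|\mcal D^2F(X^m(\kappa_m(s)))(O_{m,n}(s),O_{m,n}(s))\|_{-\eta}\le L\|O_{m,n}(s)\|^2$, smooth by $(t-s)^{-\eta/2}$ with $\eta\in[1,2)$, and use $\|O_{m,n}(s)\|_{\mbf L^4(\Omega;H)}^2\le K(s-\kappa_m(s))\le K\tau$ so that the prefactor $m$ combines with $\tau$ into $m\tau=T$.

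The genuine difficulty is $I_3^{m,n}$, where a naive Minkowski estimate fails: since $\|O_{m,n}(s)\|_{\mbf L^2(\Omega;H)}\sim(s-\kappa_m(s))^{1/2}$, bounding termwise would produce $m\cdot\tau^{1/2}=T^{1/2}m^{1/2}\to\infty$, so the gain must come from the probabilistic structure. I would split the $s$-integral along the grid, writing $I_3^{m,n}(t)=\sum_k\xi_k$ with $\xi_k:=m\int_{t_k}^{t_{k+1}\wedge t}E_n(t-s)P_n\mcal D F(X^m(t_k))O_{m,n}(s)\ud s$, and observe that for $j<k$ the factor $\mcal D F(X^m(t_k))$ is $\mcal F_{t_k}$-measurable while $\mbf E[O_{m,n}(s)\mid\mcal F_{t_k}]=0$ for $s\ge t_k$, whence $\mbf E[\xi_k\mid\mcal F_{t_k}]=0$ and $\mbf E\LL\xi_j,\xi_k\RR=0$. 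This orthogonality gives $\|I_3^{m,n}(t)\|_{\mbf L^2(\Omega;H)}^2=\sum_k\|\xi_k\|_{\mbf L^2(\Omega;H)}^2$, and a Cauchy--Schwarz in time together with $\mbf E\|O_{m,n}(s)\|^2\le K\tau$ yields $\|\xi_k\|_{\mbf L^2(\Omega;H)}^2\le m^2\tau\cdot L^2K\tau\cdot\tau=L^2KT^3/m$; summing the $m$ terms produces a bound independent of $m,n,t$. Collecting all five estimates gives $\|\widetilde U^{m,n}(t)\|_{\mbf L^2(\Omega;H)}\le C_{\delta,\eta,T}+L\int_0^t\|\widetilde U^{m,n}(s)\|_{\mbf L^2(\Omega;H)}\ud s$, and Gronwall's inequality finishes the proof. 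The \emph{main obstacle} is thus the orthogonality/martingale argument for $I_3^{m,n}$; everything else is a careful bookkeeping of the compensating powers of $s-\kappa_m(s)$ against the prefactor $m$, carried out uniformly in the Galerkin dimension $n$.
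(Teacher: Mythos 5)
Your proposal is correct and follows essentially the same route as the paper: the same term-by-term treatment of $I_1^{m,n},I_2^{m,n},I_4^{m,n}$ via \eqref{F'}, \eqref{Fgrow}, \eqref{F''bound} and the compensation of the prefactor $m$ by $s-\kappa_m(s)\le T/m$, the same orthogonality/martingale decomposition of $I_3^{m,n}$ along the grid (the paper's $J_{k,t}$ are exactly your $\xi_k/m$, with $\mbf E\LL J_{k,t},J_{l,t}\RR=0$ established from the $\mcal F_{t_k}$-measurability of $X^m(t_k)$ and the independence of the inner stochastic integral), and the same closing Gronwall argument. No gaps.
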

\begin{proof}
	By \eqref{F'}, \eqref{semigroup1}, \eqref{semigroup2}, $E_n(t)P_nu=E(t)P_nu$, $u\in H$ and $\|P_n\|_{\mcal L(H)}\le 1$, 
	\begin{align*}
		&\;\|I_1^{m,n}(t)\|\\
		\le&\; Km\int_0^t\|(-A)^{\frac{\delta}{2}}E(t-s)\|_{\mcal L(H)}(1+\|X^m(\kappa_m(s))\|_1)\|(-A)^{-1}(E(s-\kappa_m(s))-I)\|_{\mcal L(H)}\|X^m(\kappa_m(s))\|_1\ud s\\
		\le&\; K_T\int_0^t(t-s)^{-\frac{\delta}{2}}(1+\|X^m(\kappa_m(s))\|_1^2)\ud s.
	\end{align*}	
	Then Lemma \ref{Xm-regularity}(i) yields that for any $m,n\ge 1$ and $t\in[0,T]$, 
	\begin{align*}
		\|I_1^{m,n}(t)\|_{\mbf L^2(\Omega;H)}\le K_T\int_0^t(t-s)^{-\frac{\delta}{2}}\big(1+\|X^m(\kappa_m(s))\|_{\mbf L^4(\Omega;\dot{H}^1)}^2\big)\ud s\le K_{\delta,T}.
	\end{align*}
	Combining $\|E(t)\|_{\mcal L(H)}\le 1$, \eqref{Fgrow} and  Lemma \ref{Xm-regularity}(i), we have that for $m,n\ge 1$ and $t\in[0,T]$,
	\begin{align*}
		\|I_2^{m,n}(t)\|_{\mbf L^2(\Omega;H)}\le Km\int_0^t\int_{\kappa_m(s)}^s(1+\|X^m(\kappa_m(r))\|_{\mbf L^2(\Omega;H)})\ud r\ud s\le K_T.
	\end{align*}
	Further, denoting $t_k=k\tau$, $k=0,1,\ldots,m$, we write $I_3^{m,n}(t)$ as
	\begin{align*}
		I_3^{m,n}(t)=&m\sum_{k=0}^{\lfloor\frac{t}{\tau}\rfloor}\int_{t_k}^{t_{k+1}\wedge t}E_n(t-s)P_n\mcal D F(X^m(t_k))\int_{t_k}^sE_n(s-r)P_n\ud W^{Q_n}(r)\ud s\\
		=:&m\sum_{k=0}^{\lfloor\frac{t}{\tau}\rfloor}J_{k,t}.
	\end{align*}
	Since $X^m(t_k)$ is $\mcal F_{t_k}$-measurable and $\int_{t_k}^sE_n(s-r)\ud W^{Q_n}(r)$ is independent of  $\mcal F_{t_k}$, it follows from the property of conditional expectation that the  $\mbf E\LL J_{k,t},J_{l,t}\RR=0$ for $k\neq l$.
	By the BDG inequality, for $p>0$,
	\begin{align}\label{Omnbound}
		\mbf E\|O_{m,n}(s)\|^p\le K(p)\Big(\int_{\kappa_m(s)}^s\|E_n(s-r)P_nQ_n^{\frac{1}{2}}\|_{\mcal L_2(H)}^2\ud r\Big)^{\frac{p}{2}}\le K_{p,T}m^{-\frac{p}{2}}\|(-A)^{\frac{\beta-1}{2}}Q^{\frac{1}{2}}\|_{\mcal L_2(H)}^p,
	\end{align}
	where we used the fact
	\begin{align}\label{trQfinite}
		\|Q_n^{\frac{1}{2}}\|_{\mcal L_2(H)}\le \|Q^{\frac{1}{2}}\|_{\mcal L_2(H)}\le \|(-A)^{-\frac{\beta-1}{2}}\|_{\mcal L(H)}\|(-A)^{\frac{\beta-1}{2}}Q^{\frac{1}{2}}\|_{\mcal L_2(H)}<\infty.
	\end{align} 
	Therefore, the H\"older inequality, \eqref{Fgrow} and \eqref{Omnbound} lead to
	\begin{align*}
		\mbf E\|I^{m,n}_3(t)\|^2=&\;m^2\sum_{k=0}^{\lfloor\frac{t}{\tau}\rfloor}\mbf E\|J_{k,t}\|^2\le Km^2\sum_{k=0}^{\lfloor\frac{t}{\tau}\rfloor}\int_{t_k}^{t_{k+1}\wedge t}(t_{k+1}\wedge t-t_k)\mbf E\|O_{m,n}(s)\|^2\ud r\ud s\le K_T.
	\end{align*}
	It follows from \eqref{semigroup1}, \eqref{F''bound} and \eqref{Omnbound} that
	\begin{align*}
		\|I_4^{m,n}(t)\|_{\mbf L^2(\Omega;H)}\le Km\int_0^t\|(-A)^{\frac{\eta}{2}}E(t-s)\|_{\mcal L(H)}\|O_{m,n}(s)\|_{\mbf L^4(\Omega;H)}^2\ud s\le K_{\eta,T}.
	\end{align*}
	
	Combining  \eqref{Fgrow}, \eqref{Utildemn} and the above estimates for $\|I^{m,n}_i(t)\|_{\mbf L^2(\Omega;H)}$ for $i=1,2,3,4$, we conclude that  for any $m,n\ge1$ and $t\in[0,T]$,
	\begin{align*}
		\|\widetilde{U}^{m,n}(t)\|_{\mbf L^2(\Omega;H)}\le K\int_0^t	 \|\widetilde{U}^{m,n}(s)\|_{\mbf L^2(\Omega;H)}\ud s+K_{\delta,\eta,T}.
	\end{align*}
	This formula together with the Gronwall inequality finishes the proof.
\end{proof}

\vspace{5mm}
\textit{Proof of Lemma \ref{Utildemn-Utildem}.}
By \eqref{Utildemn}, \eqref{Utildem} and $E(t)P_n=E_n(t)P_n$, 
\begin{align}\label{sec3eq1}
	\widetilde{U}^{m,n}(t)-\widetilde{U}^m(t)=\int_0^tE(t-s)\mcal D F(X(s))(\widetilde{U}^{m,n}(s)-\widetilde{U}^m(s))\ud s+\sum_{i=0}^4S_i^{m,n}(t),~t\in[0,T],
\end{align}
where
\begin{align*}
	S_0^{m,n}(t):=&\;\int_0^tE(t-s)(P_n-I)\mcal D F(X(s))\widetilde{U}^{m,n}(s)\ud s,\\
	S_1^{m,n}(t):=&\;m\int_0^tE(t-s)(I-P_n)\mcal D F(X^m(\kappa_m(s)))(E(s-\kappa_m(s))-I)X^m(\kappa_m(s))\ud s\\
	&\;+m\int_0^tE(t-s)P_n\mcal D F(X^m(\kappa_m(s)))(I-P_n)(E(s-\kappa_m(s))-I)X^m(\kappa_m(s))\ud s,\\
	S_2^{m,n}(t):=&\; m\int_0^tE(t-s)(I-P_n)\mcal D F(X^m(\kappa_m(s)))\int_{\kappa_m(s)}^sE(s-r)F(X^m(\kappa_m(r)))\ud r\ud s\\
	&\;+m\int_0^tE(t-s)P_n\mcal D F(X^m(\kappa_m(s)))\int_{\kappa_m(s)}^s(I-P_n)E(s-r)F(X^m(\kappa_m(r)))\ud r\ud s,\\
	S_3^{m,n}(r)=&\;S_{3,1}^{m,n}(t)+S_{3,2}^{m,n}(t)\\
	:=&\;m\int_0^tE(t-s)(I-P_n)\mcal D F(X^m(\kappa_m(s)))O_m(s)\ud s\\
	&\;+m\int_0^tE(t-s)P_n\mcal D F(X^m(\kappa_m(s)))(O_m(s)-O_{m,n}(s))\ud s,\\
	S_4^{m,n}(t):=&\;\frac{m}{2}\int_0^tE(t-s)(I-P_n)\mcal D^2 F(X^m(\kappa_m(s)))(O_m(s),O_m(s))\ud s\\
	&\;+\frac{m}{2}\int_0^tE(t-s)P_n\mcal D^2 F(X^m(\kappa_m(s)))(O_m(s)+O_{m,n}(s),O_m(s)-O_{m,n}(s))\ud s.
\end{align*}

It follows from \eqref{semigroup1}--\eqref{Fgrow} and Lemma \ref{Utildemnbounded} that for any $m,n\ge1$, $t\in[0,T]$ and $\rho\in(0,1)$,
\begin{align*}
	\|S_0^{m,n}(t)\|_{\mbf L^2(\Omega;H)}\le L\int_0^t\|(-A)^{-\rho}(I-P_n)\|_{\mcal L(H)}\|(-A)^\rho E(t-s)\|_{\mcal L(H)}\|\widetilde{U}^{m,n}(s)\|_{\mbf L^2(\Omega;H)}\ud s \le K\lambda_{n+1}^{-\rho},
\end{align*}
where we used the fact $\|(-A)^{-\gamma}(I-P_n)\|_{\mcal L(H)}\le \lambda_{n+1}^{-\gamma}$ for $\gamma\ge 0$.
Let $\epsilon\ll 1$ and $\rho=\min(1-\frac{\delta}{2}-\epsilon,\frac{\beta-1}{2},\frac{1-\epsilon}{2})$. Then \eqref{semigroup1}, \eqref{semigroup2} and \eqref{F'} produce
\begin{align*}
	&\;\|S_1^{m,n}(t)\|\\
	\le&\; Lm\int_0^t\|(-A)^{-\rho}(I-P_n)\|_{\mcal L(H)}\|(-A)^{\rho+\frac{\delta}{2}} E(t-s)\|_{\mcal L(H)}(1+\|X^m(\kappa_m(s))\|_1)\\
	&\;\cdot\|(-A)^{-1}(E(s-\kappa_m(s))-I)\|_{\mcal L(H)}\|X^m(\kappa_m(s))\|_1\ud s+Lm\int_0^t\|(-A)^{\frac{\delta}{2}}E(t-s)\|_{\mcal L(H)}\\
	&\;\cdot(1+\|X^m(\kappa_m(s))\|_1)\|(-A)^{-\rho}(I-P_n)\|_{\mcal L(H)}\|(-A)^{-1}(E(s-\kappa_m(s))-I)\|_{\mcal L(H)}\|X^m(\kappa_m(s))\|_{1+2\rho}\ud s\\
	\le &\; K\lambda_{n+1}^{-\rho}\int_0^t(t-s)^{-(\rho+\frac{\delta}{2})}(1+\|X^m(\kappa_m(s))\|_1^2)\ud s\\
	&\;+K\lambda_{n+1}^{-\rho}\int_0^t(t-s)^{-\frac{\delta}{2}}(1+\|X^m(\kappa_m(s))\|_1)\|X^m(\kappa_m(s))\|_{\min(\beta,2-\epsilon)}\ud s.
\end{align*}
The above formula combined with the H\"older inequality and Lemma \ref{Xm-regularity}(i) yields
\begin{align*}
	\sup_{m\ge 1}\sup_{t\in[0,T]}\|S_1^{m,n}(t)\|_{\mbf L^2(\Omega\;H)}\le K\lambda_{n+1}^{-\rho}=K\lambda_{n+1}^{-\min(1-\frac{\delta}{2}-\epsilon,\frac{\beta-1}{2})}
\end{align*}
due to $1-\frac{\delta}{2}-\epsilon\le \frac{1-\epsilon}{2}$. 

Using \eqref{semigroup1}, \eqref{Fgrow} and \eqref{F'} gives
\begin{align*}
	&\;\|S_2^{m,n}(t)\|\\
	\le&\; L^2m\int_0^t\|(-A)^{-\frac{1}{2}}(I-P_n)\|_{\mcal L(H)}\|(-A)^{\frac{1}{2}}E(t-s)\|_{\mcal L(H)}\int_{\kappa_m(s)}^s(1+\|X^m(\kappa_m(r))\|)\ud r\ud s\\
	&\;+L^2m\int_0^t\|(-A)^{\frac{\delta}{2}}E(t-s)\|_{\mcal L(H)}(1+\|X^m(\kappa_m(s))\|_1)\int_{\kappa_m(s)}^s\|(-A)^{-\frac{1}{2}}(I-P_n)\|_{\mcal L(H)}(1+\|X^m(\kappa_m(r))\|)\ud r\ud s\\
	\le &\;Km\lambda_{n+1}^{-\frac{1}{2}}\int_0^t(t-s)^{-\frac{1}{2}}\int_{\kappa_m(s)}^s(1+\|X^m(\kappa_m(r))\|)\ud r\ud s\\
	&\;+Km\lambda_{n+1}^{-\frac{1}{2}}\int_0^t(t-s)^{-\frac{\delta}{2}}(1+\|X^m(\kappa_m(s))\|_1)\int_{\kappa_m(s)}^s(1+\|X^m(\kappa_m(r))\|)\ud r\ud s,
\end{align*}
which along with Lemma \ref{Xm-regularity} (i) and the H\"older inequality implies 
\[
\sup\limits_{m\ge 1}\sup\limits_{t\in[0,T]}\|S_2^{m,n}(t)\|_{\mbf L^2(\Omega;H)}\le K\lambda_{n+1}^{-\frac{1}{2}}.
\]

Similar to the treatment of $\mbf E\|I_3^{m,n}(t)\|^2$ in the proof of Lemma \ref{Utildemnbounded}, we have 
\begin{align*}
	\mbf E\|S_{3,1}^{m,n}(t)\|^2=m^2\sum_{k=0}^{\lfloor\frac{t}{\tau}\rfloor}\mbf E\Big\|\int_{t_k}^{t_{k+1}\wedge t}E(t-s)(I-P_n)\mcal D F(X^m(t_k))\int_{t_k}^sE(s-r)\ud W^Q(r)\ud s\Big\|^2.
\end{align*}
By the H\"older inequality, \eqref{semigroup1}, \eqref{Fgrow} and Lemma \ref{WQ}, we have that for $\rho\in(0,\frac{1}{2})$,
\begin{align*}
	\mbf E\|S_{3,1}^{m,n}(t)\|^2\le&\; K m^2\sum_{k=0}^{\lfloor\frac{t}{\tau}\rfloor}(t_{k+1}\wedge t-t_k)\int_{t_k}^{t_{k+1}\wedge t}\|(-A)^{-\rho}(I-P_n)\|_{\mcal L(H)}^2\|(-A)^\rho E(t-s)\|_{\mcal L(H)}^2\\
	&\;\cdot\mbf E\|\int_{t_k}^sE(s-r)\ud W^Q(r)\|^2\ud s\\
	\le &\; Km\lambda_{n+1}^{-2\rho}\sum_{k=0}^{\lfloor\frac{t}{\tau}\rfloor}\int_{t_k}^{t_{k+1}\wedge t}(t-s)^{-2\rho}(s-t_k)\ud s\le K(\rho,T)\lambda_{n+1}^{-2\rho}.
\end{align*}
We claim that for any $p\ge 1$, 
\begin{align}\label{Om-Omn}
	\|O_m(s)-O_{m,n}(s)\|_{\mbf L^p(\Omega;H)}\le Km^{-\frac{1}{2}}\Big(\lambda_{n+1}^{-\frac{\beta-1}{2}}+\big(\sum_{k=n+1}^\infty\|Q^{\frac{1}{2}}h_k\|^2\big)^{\frac{1}{2}}\Big).
\end{align}
In fact, $O_m(s)-O_{m,n}(s)=\int_{\kappa_m(s)}^s(I-P_n)E(s-r)\ud W^Q(r)+\int_{\kappa_m(s)}^sE(s-r)P_n\ud W^{Q-Q_n}(r)$ with $W^{Q-Q_n}$ being a $(Q-Q_n)$-Wiener process. Applying the BDG inequality and Lemma \ref{WQ}, we have that for $p\ge1$,
\begin{align*}
	\mbf E\|O_m(s)-O_{m,n}(s)\|^p\le&\; K_p\|(-A)^{-\frac{\beta-1}{2}}(I-P_n)\|_{\mcal L(H)}^p\mbf E\Big\|\int_{\kappa_m(s)}^sE(s-r)\ud W^Q(r)\Big\|^p_{\beta-1}\\
	&\;+K_p\Big(\int_{\kappa_m(s)}^s\|E(s-r)P_n(Q-Q_n)^{\frac{1}{2}}\|_{\mcal L_2(H)}^2\ud r\Big)^\frac{p}{2}\\
	\le &\; Km^{-\frac{p}{2}}\Big(\lambda_{n+1}^{-\frac{p(\beta-1)}{2}}+\big(\sum_{k=n+1}^\infty\|Q^{\frac{1}{2}}h_k\|^2\big)^{\frac{p}{2}}\Big),
\end{align*}
which implies the claim \eqref{Om-Omn}. Similar to the estimate for $\mbf E\|S_{3,1}^{m,n}\|^2$, it holds that
\begin{align*}
	\mbf E\|S_{3,2}^{m,n}(t)\|^2=m^2\sum_{k=0}^{\lfloor\frac{t}{\tau}\rfloor}\mbf E\Big\|\int_{t_k}^{t_{k+1}\wedge t}E(t-s)P_n\mcal D F(X^m(t_k))(O_m(s)-O_{m,n}(s))\ud s\Big\|^2.
\end{align*}
Accordingly, the H\"older inequality, \eqref{Fgrow} and  \eqref{Om-Omn} yield
\begin{align*}
	\mbf E\|S_{3,2}^{m,n}(t)\|^2\le K \Big(\lambda_{n+1}^{-(\beta-1)}+\sum_{k=n+1}^\infty\|Q^{\frac{1}{2}}h_k\|^2\Big).
\end{align*}
Noting that $\lim\limits_{n\to\infty}\sum\limits_{k=n+1}^\infty\|Q^{\frac{1}{2}}h_k\|^2=0$ due to \eqref{trQfinite}, we have
\begin{align*}
	\lim_{n\to\infty}\sup_{m\ge1}\sup_{t\in[0,T]}\|S_3^{m,n}(t)\|_{\mbf L^2(\Omega;H)}=0.
\end{align*}

Next, let us tackle  $S_4^{m,n}$. From \eqref{F''bound} we derive 
\begin{align*}
	\|S_4^{m,n}\|\le&\; Km\int_0^t\|(-A)^{-\rho}(I-P_n)\|_{\mcal L(H)}\|(-A)^{\rho+\frac{\eta}{2}}E(t-s)\|_{\mcal L(H)}\|O_m(s)\|^2\ud s\\
	&\;+Km\int_0^t\|(-A)^{\frac{\eta}{2}}E(t-s)\|_{\mcal L(H)}(\|O_m(s)\|+\|O_{m,n}(s)\|)\|O_{m}(s)-O_{m,n}(s)\|\ud s.
\end{align*}
The H\"older inequality, \eqref{semigroup1}, Lemma \ref{WQ}, \eqref{Omnbound} and \eqref{Om-Omn} give  that for $\rho\in(0,1-\frac{\eta}{2})$,
\begin{align*}
	\|S_4^{m,n}\|_{\mbf L^2(\Omega;H)}\le&\; Km \lambda_{n+1}^{-\rho}\int_0^t(t-s)^{-(\rho+\frac{\eta}{2})}\|O_m(s)\|_{\mbf L^4(\Omega;H)}^2\ud s\\
	&\; +km\int_0^t(t-s)^{-\frac{\eta}{2}}(\|O_m(s)\|_{\mbf L^4(\Omega;H)}+\|O_{m,n}(s)\|_{\mbf L^4(\Omega;H)})\|O_m(s)-O_{m,n}(s)\|_{\mbf L^4(\Omega;H)}\ud s\\
	\le&\; K\Big(\lambda_{n+1}^{-\rho}+\lambda_{n+1}^{-\frac{\beta-1}{2}}+\big(\sum_{k=n+1}^\infty\|Q^{\frac{1}{2}}h_k\|^2\big)^{\frac{1}{2}}\Big).
\end{align*}
In this way, one has  $\sup\limits_{m\ge 1}\sup\limits_{t\in[0,T]}\|S_4^{m,n}(t)\|_{\mbf L^2(\Omega;H)}\le K\lambda_{n+1}^{-\frac{1}{2}}$.

By \eqref{sec3eq1} and \eqref{Fgrow}, for any $m,n\ge1$,
\begin{align*}
	\|\widetilde{U}^{m,n}(t)-\widetilde{U}^m(t)\|_{\mbf L^2(\Omega;H)}\le K\int_0^t\|\widetilde{U}^{m,n}(s)-\widetilde{U}^m(s)\|_{\mbf L^2(\Omega;H)}\ud s+\sum_{i=0}^4\sup_{m\ge 1}\sup_{t\in[0,T]}\|S_i^{m,n}(t)\|_{\mbf L^2(\Omega;H)}.
\end{align*}
Then the Gronwall inequality yields  $$\sup\limits_{m\ge1}\sup_{t\in[0,T]}\|\widetilde{U}^{m,n}(t)-\widetilde{U}^m(t)\|_{\mbf L^2(\Omega;H)}\le K_T\sum_{i=0}^4\sup_{m\ge 1}\sup_{t\in[0,T]}\|S_i^{m,n}(t)\|_{\mbf L^2(\Omega;H)},$$
which combined with the previous estimates for $\|S_i^{m,n}(t)\|_{\mbf L^2(\Omega;H)}$, $i=0,\ldots,4$ completes the proof of Lemma \ref{Utildemn-Utildem}. \hfill $\square$

\subsection{Proof of Lemma \ref{Utildemn-converge}}
In this subsection, we prove Lemma \ref{Utildemn-converge}. For this end, we present the convergence of $I_i^{m,n}$ as $m\to\infty$ in Lemma \ref{Imnconverge}
and the tightness of some main terms of the error decomposition in Lemma \ref{Imntight}.

In the proof of Lemma \ref{Imnconverge}, we will use two useful propositions, where the first one is an adaptation of \cite[Lemma C.2]{Fukasawa2023}.
\begin{pro}\label{keyprop}
	Let $T_0,T$ be given positive numbers and $g\in\mbf L^q(0,1)$, $q\in(1,+\infty]$,  be nonnegative or nonpositive for almost every $t\in(0,1)$. Let $\{Y^m(t)\}_{t\in[0,T_0]}$, $m\in\mbb N^+$, and $\{Y(t)\}_{t\in[0,T_0]}$ be  $\mbb R^d$-valued  stochastic processes defined on $(\Omega,\mcal F,\mbf P)$ with $Y$ being  continuous almost surely. If $\lim\limits_{m\to\infty}\mbf E\int_0^{T_0}|Y^m(s)-Y(s)|^p\ud s=0$ with $p\ge 1$ satisfying  $\frac{1}{p}+\frac{1}{q}=1$, then 
	\begin{align*}
		\lim_{m\to\infty}\int_0^{T_0}Y^m(s)g\Big(\frac{ms}{T}-\big\lfloor\frac{ms}{T}\big\rfloor\Big)\,\ud s=\int_0^1g(r)\,\ud r\int_0^{T_0}Y(s)\ud s\quad\text{in}~\mbf L^p(\Omega;\mbb R^d).
	\end{align*}
\end{pro}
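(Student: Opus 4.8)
The plan is to split the target difference into an \emph{error-replacement} part and a \emph{periodic-averaging} part and to show that each tends to $0$ in $\mbf L^p(\Omega;\mbb R^d)$. Writing $\{x\}:=x-\lfloor x\rfloor$ and $\bar g:=\int_0^1 g(r)\,\ud r$, I would decompose
\begin{align*}
\int_0^{T_0}Y^m(s)g(\{ms/T\})\,\ud s-\bar g\int_0^{T_0}Y(s)\,\ud s
=&\;\int_0^{T_0}\big(Y^m(s)-Y(s)\big)g(\{ms/T\})\,\ud s\\
&\;+\Big(\int_0^{T_0}Y(s)g(\{ms/T\})\,\ud s-\bar g\int_0^{T_0}Y(s)\,\ud s\Big).
\end{align*}
The single estimate underpinning both terms is a uniform bound on the oscillating factor: the substitution $r=ms/T$ together with the $1$-periodicity of $|g(\{\cdot\})|^q$ gives, for every $m\ge1$,
\begin{align*}
\int_0^{T_0}\big|g(\{ms/T\})\big|^q\,\ud s=\frac{T}{m}\int_0^{mT_0/T}|g(\{r\})|^q\,\ud r\le (T_0+T)\,\|g\|_{\mbf L^q(0,1)}^q=:C_g^q
\end{align*}
(when $q=\infty$, i.e.\ $p=1$, one replaces the $\mbf L^q$-factor by $\|g\|_{\mbf L^\infty(0,1)}$ throughout).

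For the first term I would apply H\"older's inequality in $s$ (exponents $p,q$) pathwise and then take expectations, obtaining
\begin{align*}
\mbf E\Big|\int_0^{T_0}\big(Y^m(s)-Y(s)\big)g(\{ms/T\})\,\ud s\Big|^p
\le C_g^p\,\mbf E\int_0^{T_0}|Y^m(s)-Y(s)|^p\,\ud s\longrightarrow 0
\end{align*}
by hypothesis. This step uses only $g\in\mbf L^q(0,1)$ and the assumed $\mbf L^p$-convergence of $Y^m$ to $Y$.

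The heart of the argument is the periodic-averaging term, which I would first settle pathwise. Fixing $\omega$ in the full-measure set on which $Y(\cdot,\omega)$ is continuous (hence uniformly continuous and bounded on $[0,T_0]$, with modulus of continuity $\omega_Y$), I partition $[0,T_0]$ into the periods $[t_k,t_{k+1}]$ with $t_k=kT/m$, $k=0,\dots,N_m-1$, $N_m=\lfloor mT_0/T\rfloor$, plus a remainder of length $<T/m$. On each period the substitution $s=t_k+Tr/m$ yields $\int_{t_k}^{t_{k+1}}Y(s)g(\{ms/T\})\,\ud s=\tfrac{T}{m}\int_0^1 Y(t_k+Tr/m)g(r)\,\ud r$; replacing $Y(t_k+Tr/m)$ by $Y(t_k)$ costs at most $\tfrac{T}{m}\,\omega_Y(T/m)\,\|g\|_{\mbf L^1(0,1)}$ per period, so the total replacement error is $\le T_0\,\omega_Y(T/m)\,\|g\|_{\mbf L^1(0,1)}\to0$, while the main contributions sum to $\bar g\sum_{k}\tfrac{T}{m}Y(t_k)\to\bar g\int_0^{T_0}Y(s)\,\ud s$ as a Riemann sum; the remainder interval contributes at most $\|Y(\cdot,\omega)\|_\infty\,\tfrac{T}{m}\|g\|_{\mbf L^1(0,1)}\to0$. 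This gives almost sure convergence of the periodic-averaging term to $0$.

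To upgrade this to $\mbf L^p(\Omega)$ convergence I would invoke dominated convergence: by H\"older and the uniform bound $C_g$, the periodic-averaging term is dominated in absolute value by $(C_g+|\bar g|T_0^{1/q})\big(\int_0^{T_0}|Y(s)|^p\,\ud s\big)^{1/p}$, whose $p$-th power is integrable over $\Omega$ since $Y\in\mbf L^p(\Omega\times[0,T_0])$ (a consequence of the hypothesis). Combining the two parts by the triangle inequality in $\mbf L^p(\Omega;\mbb R^d)$ then yields the claim. The main obstacle is precisely this periodic-averaging step together with its uniformization: one must transfer the elementary averaging to the random, merely continuous integrand $Y$ and control it in $\mbf L^p(\Omega)$ uniformly in $m$. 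The sign hypothesis on $g$ is what lets one read the oscillating factor as a signed-definite measure $g(\{ms/T\})\,\ud s$ converging weakly to $\bar g\,\ud s$, which is the conceptual content behind the pathwise limit, while the quantitative $\mbf L^q$-bound above does the analytic work.
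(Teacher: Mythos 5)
Your argument is correct in substance and follows the same skeleton as the paper's proof: the decomposition into a replacement term $\int_0^{T_0}(Y^m-Y)\,g(\{ms/T\})\,\ud s$ handled by H\"older with the uniform bound $\sup_{m\ge1}\int_0^{T_0}|g(\{ms/T\})|^q\,\ud s\le K_{T,T_0}$, plus a periodic-averaging term. The difference is that the paper outsources both halves to Fukasawa--Ugai (it quotes their Eq.\ (34) for the deterministic averaging of continuous integrands and then says ``the remained proof can be completed following the same argument as the proof of their Lemma C.1''), whereas you make the whole thing self-contained: your period-by-period substitution $s=t_k+Tr/m$, the modulus-of-continuity estimate, and the Riemann-sum limit replace the citation, and your dominated-convergence step replaces the unquoted end of their Lemma C.1. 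A small bonus of your route is that the sign condition on $g$ is never actually used (you only need $g\in\mbf L^q(0,1)\subseteq\mbf L^1(0,1)$), so your version is marginally more general; your closing remark correctly identifies the sign hypothesis as conceptual rather than load-bearing for this argument.

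One point needs repair. You justify the domination $(C_g+|\bar g|T_0^{1/q})\bigl(\int_0^{T_0}|Y(s)|^p\,\ud s\bigr)^{1/p}$ by asserting that $Y\in\mbf L^p(\Omega\times[0,T_0])$ is ``a consequence of the hypothesis.'' It is not: $\lim_m\mbf E\int_0^{T_0}|Y^m-Y|^p\,\ud s=0$ only controls the differences (take $Y^m=Y$ with $\mbf E\int_0^{T_0}|Y|^p\,\ud s=\infty$ to see that the hypothesis can hold while your dominating function is not integrable, and indeed the conclusion can then fail, e.g.\ for $Y(s)=Z\phi(s)$ with $\mbf E|Z|=\infty$). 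The integrability $\mbf E\int_0^{T_0}|Y(s)|^p\,\ud s<\infty$ is an implicit standing assumption of the proposition --- it holds in every application in the paper, and the paper's own proof buries the same requirement inside the citation --- so you should simply state it as an additional hypothesis rather than derive it. With that amendment your proof is complete.
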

\begin{proof}
	We only prove the conclusion for the case $p,q>1$, and the proof for the case $p=1$ and $q=+\infty$ is similar. It has been shown in \cite[Eq.\ (34)]{Fukasawa2023} that $\lim\limits_{m\to+\infty}\int_0^{t}f(s')g(ms'-\lfloor ms'\rfloor)\ud s'=\int_0^1g(r)\ud r\int_0^{t}f(s')\ud s'$ for any $f\in\mbf C([0,t];\mbb R^d)$. Then the change of variable $s'=\frac{s}{T}$ yields for any $f\in\mbf C([0,T_0];\mbb R^d)$ that
	\begin{align*}
		&\;\lim_{m\to\infty}\int_0^{T_0}f(s)g\Big(\frac{ms}{T}-\big\lfloor\frac{ms}{T}\big\rfloor\Big)\ud s=	\lim_{m\to\infty}T\int_0^{\frac{T_0}{T}}f(s'T)g(ms'-\lfloor ms'\rfloor)\ud s'\\
		=&\;T\int_0^1g(r)\ud r\int_0^{\frac{T_0}{T}}f(s'T)\ud s'=\int_0^1g(r)\ud r\int_0^{T_0}f(s)\ud s.
	\end{align*}
	A direct computation leads to 
	\begin{align*}
		\int_0^{T_0}\Big|g\Big(\frac{ms}{T}-\big\lfloor\frac{ms}{T}\big\rfloor\Big)\Big|^q{\rm d}s=&\sum_{j=0}^{\lfloor\frac{T_0m}{T}\rfloor-1}\int_{\frac{jT}{m}}^{\frac{(j+1)T}{m}}\Big|g(\frac{ms}{T}-j)\Big|^q\ud s+\int_{\lfloor\frac{T_0m}{T}\rfloor\frac{T}{m}}^{T_0}\Big|g(\frac{ms}{T}-\lfloor\frac{T_0m}{T}\rfloor)\Big|^q\ud s\\
		=&\;\frac{T}{m}\sum_{j=0}^{\lfloor\frac{T_0m}{T}\rfloor-1}\int_0^1|g(r)|^q\ud r+\frac{T}{m}\int_{0}^{\frac{mT_0}{T}-\lfloor\frac{mT_0}{T}\rfloor}|g(r)|^q\ud r\\
		\le & \big(\frac{T}{m}\lfloor\frac{mT_0}{T}\rfloor+\frac{T}{m}\big)\int_0^1|g(r)|^q\ud r\le K_{T,T_0}<\infty.
	\end{align*}
	The remained proof can be completed following the same argument as the proof of \cite[Lemma C.1]{Fukasawa2023}.
\end{proof}

\begin{pro}\label{propA2}
	Define the operator $\mcal K_n$ by $\mcal K_n(Z)(t):=Z(t)+A_n\int_{0}^{t}E_n(t-s)Z(s)\ud s$ for $Z\in\mbf C([0,T];H_n)$. Let $\{Y(t)\}_{t\in[0,T]}$ be an $H_n$-valued continuous semimartingale with $Y(0)=0$. Then it holds almost surely that  $\int_0^tE_n(t-s)\ud Y(s)=\mcal K_n(Y)(t)$, $t\in[0,T]$.
\end{pro}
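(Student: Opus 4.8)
The plan is to reduce the statement to the classical integration-by-parts (product) formula for semimartingales, which is legitimate because the kernel $E_n(t-\cdot)$ is deterministic and smooth and $H_n$ is finite-dimensional. Since $A_n=AP_n\in\mcal L(H_n)$ is bounded on the finite-dimensional space $H_n$, the semigroup $E_n(t)=e^{tA_n}$ is just the matrix exponential; in particular $E_n(0)=I$ on $H_n$, and for each fixed $t$ the map $s\mapsto E_n(t-s)$ is $C^1$ on $[0,t]$ with $\frac{\ud}{\ud s}E_n(t-s)=-A_nE_n(t-s)$.

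First I would fix $t\in[0,T]$ and expand everything in the basis $e_1,\dots,e_n$, writing $Y(s)=\sum_{j=1}^n Y^j(s)e_j$ with each $Y^j$ a real continuous semimartingale with $Y^j(0)=0$, and representing $E_n(t-s)$ by its matrix entries $M_{ij}(s):=\LL E_n(t-s)e_j,e_i\RR$. Then $\int_0^tE_n(t-s)\ud Y(s)$ has $i$-th coordinate $\sum_{j}\int_0^t M_{ij}(s)\ud Y^j(s)$. Each $M_{ij}$ is deterministic, continuous and of finite variation on $[0,t]$, so its quadratic covariation with $Y^j$ vanishes and the It\^o product rule reduces to the Riemann--Stieltjes integration by parts
\[
\int_0^t M_{ij}(s)\ud Y^j(s)=M_{ij}(t)Y^j(t)-M_{ij}(0)Y^j(0)-\int_0^t M_{ij}'(s)Y^j(s)\ud s.
\]
Reassembling these scalar identities into $H_n$ and using $M(t)=E_n(0)=I$, $Y(0)=0$ and $M'(s)=-A_nE_n(t-s)$, the boundary term at $s=t$ yields $Y(t)$, the boundary term at $s=0$ drops out, and the drift term becomes $A_n\int_0^tE_n(t-s)Y(s)\ud s$; hence for the fixed $t$,
\[
\int_0^tE_n(t-s)\ud Y(s)=Y(t)+A_n\int_0^tE_n(t-s)Y(s)\ud s=\mcal K_n(Y)(t)\quad\text{a.s.}
\]

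To promote this from a fixed-$t$ statement to the indistinguishability asserted in the proposition, I would exhibit both sides as continuous processes in $t$. The right-hand side $\mcal K_n(Y)$ is continuous by construction. For the left-hand side I would introduce the genuine (time-independent-integrand) stochastic integral $N(t):=\int_0^tE_n(-s)\ud Y(s)$, which is a continuous $H_n$-valued semimartingale; pulling the bounded deterministic operator $E_n(t)$ out of the integral gives $\int_0^tE_n(t-s)\ud Y(s)=E_n(t)N(t)$ for each fixed $t$ a.s., and the latter is manifestly continuous in $t$. Two a.s.-continuous processes coinciding at every fixed $t$ a.s. are indistinguishable, which proves the claim for all $t\in[0,T]$ simultaneously on one event of full probability. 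The only genuinely delicate points are the vanishing of the covariation $[M_{ij},Y^j]\equiv0$ --- precisely where the deterministic, finite-variation nature of $E_n(t-\cdot)$ enters --- together with this final continuity upgrade; all remaining manipulations are routine finite-dimensional semimartingale calculus.
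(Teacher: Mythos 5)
Your proposal is correct and follows essentially the same route as the paper, which simply invokes It\^o integration by parts to write $\int_0^tE_n(t-s)\ud Y(s)=\big[E_n(t-s)Y(s)\big]\big|_{s=0}^{s=t}-\int_0^tY(s)\ud E_n(t-s)$ and then uses $Y(0)=0$ and $\frac{\ud}{\ud s}E_n(t-s)=-A_nE_n(t-s)$. Your coordinatewise justification of the vanishing covariation and the final continuity upgrade to indistinguishability are careful elaborations of details the paper leaves implicit, not a different argument.
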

\begin{proof}
The It\^o integration by parts yields
	\begin{align*}
		\int_0^tE_n(t-s)\ud Y(s)&=\big[E_n(t-s)Y(s)\big]\big|_{s=0}^{s=t}-\int_0^tY(s)d E_n(t-s)\\
		&=Y(t)+A_n\int_0^tE_n(t-s)Y(s)\ud s=\mcal K_n(Y)(t)
	\end{align*}
	utilizing the initial condition $Y(0)=0$.
\end{proof}

\begin{lem}\label{Imnconverge}
	Let Assumptions \ref{assum1} and \ref{assum2} hold and $n\ge1$ be fixed. Then for any $t\in[0,T]$, $I_i^{m,n}(t)$ converges to $I_i^{\infty,n}(t)$ in $\mbf L^1(\Omega;H)$ as $m\to\infty$ for $i=1,2,4$. Moreover, $I_3^{m,n}\overset{d}{\Rightarrow}I_3^{\infty,n}$ in $\mbf C([0,T];H_n)$ as $m\to\infty$. Here, $I_i^{\infty,n}$, $i=1,2,3,4$ are defined as
	\begin{align*}
		I_1^{\infty,n}(t):=&\;\frac{T}{2}\int_0^tE(t-s)P_n\mcal D F(X(s))A_nX(s)\ud s, \\
		I_2^{\infty,n}(t):=&\;\frac{T}{2}\int_0^tE(t-s)P_n\mcal D F(X(s))P_nF(X(s))\ud s,\\
		I_3^{\infty,n}(t):=&\;\frac{T}{2}\int_0^tE(t-s)P_n\mcal D F(X(s))P_n\ud W^{Q_n}(s)+\frac{\sqrt{3}T}{6}\int_0^tE(t-s)P_n\mcal D F(X(s))P_n\ud \widetilde{W}^{Q_n}(s),\\
		I_4^{\infty,n}(t):=&\; \frac{T}{4}\int_0^tE(t-s)P_n\sum_{k=1}^{n}\mcal D^2 F(X(s))(P_nQ^{\frac{1}{2}}h_k,P_nQ^{\frac{1}{2}}h_k),
	\end{align*}
	where $\widetilde{W}^{Q_n}$ is the one in Lemma \ref{Utildemn-converge}.
\end{lem}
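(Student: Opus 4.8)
The plan is to treat the four terms $I_1^{m,n},\dots,I_4^{m,n}$ separately, exploiting throughout that $H_n$ is finite-dimensional so that $A_n=AP_n$, $E_n(t)$ and $P_n$ are bounded on $H_n$ with norms depending only on $n$ (in particular $\|A_n\|_{\mcal L(H_n)}=\lambda_n$). The common mechanism behind $I_1^{m,n}$, $I_2^{m,n}$ and $I_4^{m,n}$ is that each carries a factor of the mesh increment $s-\kappa_m(s)$, whose rescaling $m(s-\kappa_m(s))=T\big(\tfrac{ms}{T}-\lfloor\tfrac{ms}{T}\rfloor\big)$ is precisely $T$ times the fractional-part weight in Proposition \ref{keyprop}; applying that proposition with $g(r)=r$ (so $q=+\infty$, $p=1$) extracts the constant $\int_0^1 r\,\ud r=\tfrac12$. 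I will use repeatedly that $X^m(\kappa_m(s))\to X(s)$ in $\mbf L^2(\Omega;H)$ uniformly in $s$, which follows from Lemma \ref{converge}, the H\"older estimate in Lemma \ref{Xregularity} and $|s-\kappa_m(s)|\le T/m$, together with the growth and continuity of $\mcal D F$ and $\mcal D^2 F$ from \eqref{Fgrow}--\eqref{F''continuity}.

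For $I_1^{m,n}$ I would linearize the semigroup increment via $E_n(h)-P_n=\int_0^h A_nE_n(u)\,\ud u$ on $H_n$, so that with $h=s-\kappa_m(s)\le T/m$ one has $m(E_n(h)-P_n)=mh\,A_n+\mcal R_m(s)$ with $\|\mcal R_m(s)\|_{\mcal L(H_n)}\le K_n\,m h^2\le K_nT^2/m$. Setting $Y_1^m(s):=E_n(t-s)P_n\mcal D F(X^m(\kappa_m(s)))A_nP_nX^m(\kappa_m(s))$, this reduces $I_1^{m,n}(t)$ to $T\int_0^t\big(\tfrac{ms}{T}-\lfloor\tfrac{ms}{T}\rfloor\big)Y_1^m(s)\,\ud s$ plus a remainder vanishing in $\mbf L^1$; since $Y_1^m\to Y_1$ in the integrated $\mbf L^1$ sense, Proposition \ref{keyprop} yields $\tfrac T2\int_0^tE(t-s)P_n\mcal D F(X(s))A_nX(s)\,\ud s=I_1^{\infty,n}(t)$, using $A_nP_n=A_n$ and $E_n(t-s)P_n=E(t-s)P_n$. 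The term $I_2^{m,n}$ is identical in structure: because $F(X^m(\kappa_m(s)))$ is constant in the inner variable $r$, the inner integral equals $\big(\int_0^{s-\kappa_m(s)}E_n(u)\,\ud u\big)P_nF(X^m(\kappa_m(s)))=(s-\kappa_m(s))P_nF(X^m(\kappa_m(s)))+o(m^{-1})$, and Proposition \ref{keyprop} with $g(r)=r$ delivers $I_2^{\infty,n}(t)$.

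For $I_4^{m,n}$ I would condition on $\mcal F_{\kappa_m(s)}$ and split the quadratic form into its conditional mean and a fluctuation. Using the independence of $O_{m,n}(s)$ from $\mcal F_{\kappa_m(s)}$, the It\^o isometry and $W^{Q_n}=\sum_{k=1}^nQ^{\frac12}h_k\beta_k$, the conditional mean equals $\sum_{k=1}^n\int_{\kappa_m(s)}^s\mcal D^2 F(X^m(\kappa_m(s)))(E_n(s-r)P_nQ^{\frac12}h_k,E_n(s-r)P_nQ^{\frac12}h_k)\,\ud r$, which by $E_n(s-r)\to P_n$ is $(s-\kappa_m(s))\sum_{k=1}^n\mcal D^2 F(X^m(\kappa_m(s)))(P_nQ^{\frac12}h_k,P_nQ^{\frac12}h_k)+o(m^{-1})$; multiplying by $m/2$ produces the factor $\tfrac m2(s-\kappa_m(s))=\tfrac T2\big(\tfrac{ms}{T}-\lfloor\tfrac{ms}{T}\rfloor\big)$, and Proposition \ref{keyprop} with $g(r)=r$ supplies the extra $\tfrac12$, giving the constant $\tfrac T4$ and hence $I_4^{\infty,n}(t)$. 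The fluctuation part is shown to vanish in $\mbf L^2(\Omega;H)$: written as $\tfrac m2\sum_k\int_{t_k}^{t_{k+1}\wedge t}E_n(t-s)P_n[\mcal D^2 F(X^m(t_k))(O_{m,n}(s),O_{m,n}(s))-\text{cond.\ mean}]\,\ud s$, its summands are conditionally centered and orthogonal across distinct cells, so the variances add and, using $\mbf E\|O_{m,n}\|^4\le Km^{-2}$ from \eqref{Omnbound}, scale like $m^2\cdot m\cdot m^{-4}=m^{-1}\to0$.

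The principal obstacle is $I_3^{m,n}$, which requires genuine convergence in distribution and produces the additional, independent Wiener process $\widetilde W^{Q_n}$. I would first apply the stochastic Fubini theorem on each cell to write $m\int_{t_k}^{t_{k+1}\wedge t}O_{m,n}(s)\,\ud s=m\int_{t_k}^{t_{k+1}\wedge t}\big(\int_r^{t_{k+1}\wedge t}E_n(s-r)\,\ud s\big)P_n\,\ud W^{Q_n}(r)$, and replace $E_n$ by $P_n$ at the cost of an $\mbf L^2$ error controlled through \eqref{semigroup2}, reducing the kernel to $(t_{k+1}-r)$. The decisive step is the orthogonal decomposition $(t_{k+1}-r)=\tfrac\tau2+[(t_{k+1}-r)-\tfrac\tau2]$ in $\mbf L^2(t_k,t_{k+1})$: the constant part $\tfrac\tau2$ pairs with the increment $\Delta\beta_{k}$ and contributes $\tfrac T2\int_0^tE(t-s)P_n\mcal D F(X(s))P_n\,\ud W^{Q_n}(s)$, while the mean-zero part has per-cell variance $\int_0^\tau(u-\tfrac\tau2)^2\,\ud u=\tfrac{\tau^3}{12}$ and, being orthogonal to and hence (by joint Gaussianity) independent of the driving increments, converges to $\tfrac{\sqrt3 T}{6}\int_0^tE(t-s)P_n\mcal D F(X(s))P_n\,\ud\widetilde W^{Q_n}(s)$ with a $Q_n$-Wiener process $\widetilde W^{Q_n}$ independent of $W^{Q_n}$, since $(\tfrac{\sqrt3 T}{6})^2=\tfrac{T^2}{12}$. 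The joint convergence in distribution in $\mbf C([0,T];H_n)$ I would obtain from a finite-dimensional martingale central limit theorem in the spirit of Jacod's criterion for conditional Gaussian martingales \cite{Jacod1997}, verifying the limits of the predictable quadratic (co)variations via Proposition \ref{keyprop} and establishing tightness by a Kolmogorov moment bound in $H_n$; the finite-dimensionality of $H_n$ is exactly what renders both the limit theorem and the tightness tractable.
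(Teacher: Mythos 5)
Your proposal follows essentially the same route as the paper: Taylor-expanding the semigroup increment and invoking Proposition \ref{keyprop} with the fractional-part weight for $I_1^{m,n}$ and $I_2^{m,n}$; splitting the quadratic form in $I_4^{m,n}$ into its It\^o/bracket part (your ``conditional mean'') plus conditionally centered, cross-cell-orthogonal fluctuations with the same $m^2\cdot m\cdot m^{-4}$ variance count; and, for $I_3^{m,n}$, applying the stochastic Fubini theorem and Jacod's stable limit theorem for conditional Gaussian martingales. Your treatment of $I_3^{m,n}$ is a mild reorganization of the paper's: you orthogonally decompose the kernel $m(t_{k+1}-r)=\tfrac T2+m\bigl[(t_{k+1}-r)-\tfrac\tau2\bigr]$ up front, which makes the coefficients $\tfrac T2$ and $\tfrac{\sqrt3T}6$ visible before the limit theorem, whereas the paper computes the brackets $\LL V^{m,i},V^{m,j}\RR_t\to\tfrac{T^2}{3}\int(\cdots)$ and $\LL V^{m,i},\beta_j\RR_t\to\tfrac T2\int(\cdots)$ for the undecomposed martingale and then reads off $u^{i,k},v^{i,k}$ from Jacod's representation via $\tfrac{T^2}4+\tfrac{T^2}{12}=\tfrac{T^2}3$; the two are equivalent.

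The one point your sketch leaves unaddressed is the passage from the driving martingale to $I_3^{m,n}(t)=\int_0^tE_n(t-s)\,\ud\widetilde V^m(s)$, which is a stochastic convolution and not itself a martingale, so a martingale CLT cannot be applied to it directly and its finite-dimensional distributions are not immediately controlled by the brackets you compute. The paper resolves this by first proving $\widetilde V^m\overset{d}{\Rightarrow}\widetilde V$ in $\mbf C([0,T];H_n)$ and then transferring the convergence through the deterministic integration-by-parts identity $\int_0^tE_n(t-s)\,\ud Y(s)=Y(t)+A_n\int_0^tE_n(t-s)Y(s)\,\ud s$ (Proposition \ref{propA2}), whose right-hand side is a continuous map on $\mbf C([0,T];H_n)$, so the continuous mapping theorem applies. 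You should make this reduction explicit; it is a standard finite-dimensional step, so the gap is presentational rather than fatal.
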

\begin{proof}
	In this proof, we always let $n\ge1$ be fixed. We divide the proof into four steps.\\
	\textbf{Step 1: Convergence of $I_1^{m,n}(t)$ in $\mbf L^1(\Omega;H_n)$.}\\
	Let $I_n$ be the identity operator on $H_n$. Using the fact $m(s-\kappa_m(s))=T\big(\frac{ms}{T}-\big\lfloor\frac{ms}{T}\big\rfloor\big)$, we have
	\begin{align*}
		I_1^{m,n}(t)=&\;m\int_0^tE(t-s)P_n\mcal D F(X^m(\kappa_m(s)))\big(e^{(s-\kappa_m(s))A_n}-I_n\big)P_nX^m(\kappa_m(s))\ud s\\
		=&\; T\int_0^tE(t-s)P_n\mcal D F(X^m(\kappa_m(s)))A_nX^m(\kappa_m(s))\big(\frac{ms}{T}-\big\lfloor\frac{ms}{T}\big\rfloor\big)\ud s+\widetilde{R}_1^{m,n}(t),
	\end{align*}
	with
	\begin{align*}
		\widetilde{R}_1^{m,n}(t)=m\int_0^tE(t-s)P_n\mcal D F(X^m(\kappa_m(s)))\big(e^{(s-\kappa_m(s))A_n}-I_n-(s-\kappa_m(s))A_n\big)P_nX^m(\kappa_m(s))\ud s.
	\end{align*}
	By the fact $\|A_n\|_{\mcal L(H_n)}=\lambda_n$ and the Taylor formula, one has
	\begin{align}\label{exptAn}
		\|e^{tA_n}-I_n-tA_n\|_{\mcal L(H_n)}\le K_nt^2,
	\end{align}
	which together with \eqref{Fgrow} and Lemma \ref{Xm-regularity}(i) leads to
	\begin{align}\label{sec3eq2}
		\mbf E\|\widetilde{R}_1^{m,n}(t)\|\le K(n)m^{-1}\int_0^t \mbf E\|X^m(\kappa_m(s))\|\ud s\le K_nm^{-1}.
	\end{align}
	
	Based on \eqref{sec3eq2} and Proposition \ref{keyprop}, it follows that $\lim\limits_{m\to\infty}\mbf E\|I_1^{m,n}(t)-I_1^{\infty,n}(t)\|=0$ once we show
	\begin{align}\label{sec3eq3}
		\lim_{m\to\infty}\mbf E\int_0^t\|E(t-s)P_n\mcal D F(X^m(\kappa_m(s)))A_nX^m(\kappa_m(s))-E(t-s)P_n\mcal D F(X(s))A_nX(s)\|\ud s=0.
	\end{align}
	It should be noted that  Proposition \ref{keyprop} also applies to $H_n$-valued stochastic processes, since $H_n$ is isometric to $\mathbb R^n$.
	Next, we justify \eqref{sec3eq3}. It follows from the Taylor formula,  $\|P_n(-A)^{\rho}\|_{\mcal L(H)}= \lambda_n^\rho$, $\rho\ge0$,  \eqref{Fgrow}, \eqref{F''bound}, the H\"older inequality, Lemmas \ref{converge} and \ref{Xm-regularity} that
	\begin{align*}
		&\;\mbf E\int_0^t\|E(t-s)P_n\mcal D F(X^m(\kappa_m(s)))A_nX^m(\kappa_m(s))-E(t-s)P_n\mcal D F(X(s))A_nX(s)\|\ud s\\
		\le &\; \mbf E\int_0^t\|E(t-s)P_n\int_0^1\mcal D^2 F\big(X(s)+\lambda(X^m(\kappa_m(s))-X(s))\big)(X^m(\kappa_m(s))-X(s),A_nX^m(\kappa_m(s)))\|\ud \lambda \ud s\\
		&\;+\mbf E\int_0^t\|E(t-s)P_n\mcal D F(X(s))A_n(X^m(\kappa_m(s))-X(s))\|\ud s\\
		\le&\; K_{n,\eta}\int_0^t\|X^m(\kappa_m(s)-X(s)\|_{\mbf L^2(\Omega;H)}\|X^m(\kappa_m(s))\|_{\mbf L^2(\Omega;H)}\ud s+K(n)\int_0^t\mbf E\|X^m(\kappa_m(s)-X(s)\| \ud s\\
		\le &\; K_{n,\eta}m^{-\frac{1}{2}}.
	\end{align*}
	Thus, we prove \eqref{sec3eq3}, which implies $\lim\limits_{m\to\infty}\mbf E\|I_1^{m,n}(t)-I_1^{\infty,n}(t)\|=0$, for any $t\in[0,T]$.
	
	\bigskip
	\textbf{Step 2: Convergence of $I_2^{m,n}(t)$ in $\mbf L^1(\Omega;H_n)$.}\\
	Note that $\int_{\kappa_m(s)}^sE_n(s-r)\ud r=A_n^{-1}(e^{(s-\kappa_m(s))A_n}-I_n)$, where $A_n^{-1}$ is the inverse of $A_n$ restricted on $H_n$. By \eqref{exptAn} and \eqref{Fgrow}, we have
	\begin{align*}
		I_2^{m,n}(t)=T\int_0^tE(t-s)P_n\mcal D F(X^m(\kappa_m(s)))P_nF(X^m(\kappa_m(s)))\big(\frac{ms}{T}-\big\lfloor\frac{ms}{T}\big\rfloor\big)\ud s+\widetilde{R}_2^{m,n}(t),
	\end{align*} 
	with $\|\widetilde{R}_2^{m,n}(t)\|\le K(n)m^{-1}\int_0^t(1+\|X^m(\kappa_m(s))\|)\ud s$. Further, Lemma \ref{Xm-regularity}(i) yields $\lim\limits_{m\to\infty}\mbf E|\widetilde{R}_2^{m,n}(t)\|=0$. 
	Following the similar argument for \eqref{sec3eq3}, we have
	\begin{align*}
		\lim_{m\to\infty}\mbf E\int_0^t\|E(t-s)P_n\mcal D F(X^m(\kappa_m(s)))P_nF(X^m(\kappa_m(s)))-E(t-s)P_n\mcal D F(X(s))P_nF(X(s))\|\ud s=0.
	\end{align*}
	Thus, we can apply Proposition \ref{keyprop} to getting
	$\lim\limits_{m\to\infty}\mbf E\|I_2^{m,n}(t)-I_2^{\infty,n}(t)\|=0$.
	
	\bigskip
	\textbf{Step 3: Convergence of $I_3^{m,n}$ in distribution.}\\
	Denote $\widetilde{V}^m(t):=m\int_0^tP_n\mcal D F(X^m(\kappa_m(s)))\int_{\kappa_m(s)}^sE_n(s-r)P_n\ud W^{Q_n}(r)\ud s$, $t\in[0,T]$. Then we rewire $I_3^{m,n}(t)$ as $I_3^{m,n}(t)=\int_0^tE_n(t-s)d \widetilde{V}^m(s)$.
	Recalling that $W^{Q_n}(t)=\sum_{k=1}^nQ^{\frac{1}{2}}h_k\beta_k(t)$ and using the stochastic Fubini theorem, we obtain
	\begin{align*}
		\widetilde{V}^m(t)&=m\sum_{k=1}^n\int_0^t\int_{\kappa_m(s)}^sP_n\mcal D F(X^m(\kappa_m(s)))e^{(s-r)A_n}P_nQ^{\frac{1}{2}}h_k\ud \beta_k(r)\ud s\\
		&=m\sum_{k=1}^n\int_0^t\int_{r}^{\kappa_m(r)+\frac{T}{m}}P_n\mcal D F(X^m(\kappa_m(s)))e^{(s-r)A_n}P_nQ^{\frac{1}{2}}h_k\,\ud s\ud \beta_k(r)\\
		&=m\sum_{k=1}^n\int_0^tP_n\mcal D F(X^m(\kappa_m(r)))A_n^{-1}\big(e^{(\kappa_m(r)+\frac{T}{m}-r)A_n}-I_n\big)P_nQ^{\frac{1}{2}}h_k\ud\beta_k(r),
	\end{align*}
	where we used the fact $\kappa_m(s)=\kappa_m(r)$ for $s\in(r,\kappa_m(r)+\frac{T}{m})$.
	
	Next, we will use \cite[Theorem 4-1]{Jacod1997} to give the convergence of $\widetilde{V}^m$ in distribution on $\mbf C([0,T];H_n)$. For this end, we define the process $V^m(t)=(V^{m,1}(t),\ldots,V^{m,n}(t))\in\mbb R^n$, $t\in[0,T]$ by  $V^{m,i}(t)=\LL \widetilde{V}^m(t),e_i\RR$, $i=1,\ldots,n$, i.e.,
	\begin{align}\label{Vmi}
		V^{m,i}(t)=m\sum_{k=1}^n\int_0^t \LL P_n\mcal D F(X^m(\kappa_m(r)))A_n^{-1}\big(e^{(\kappa_m(r)+\frac{T}{m}-r)A_n}-I_n\big)P_nQ^{\frac{1}{2}}h_k,e_i\RR\ud\beta_k(r).
	\end{align}
	Hereafter, we denote $\LL X,Y\RR_t$, $t\in[0,T]$ the  variation process between the real-valued semimartingales $\{X(t)\}_{t\in[0,T]}$ and $\{Y(t)\}_{t\in[0,T]}$. Then, it is easy to compute
	\begin{align*}
		\LL V^{m,i},V^{m,j}\RR_ t=&\;m^2\sum_{k=1}^{n}\int_0^t\big[\LL P_n\mcal D F(X^m(\kappa_m(r)))A_n^{-1}\big(e^{(\kappa_m(r)+\frac{T}{m}-r)A_n}-I_n\big)P_nQ^{\frac{1}{2}}h_k,e_i\RR\\
		&\;\cdot \LL P_n\mcal D F(X^m(\kappa_m(r)))A_n^{-1}\big(e^{(\kappa_m(r)+\frac{T}{m}-r)A_n}-I_n\big)P_nQ^{\frac{1}{2}}h_k,e_j\RR\big]\,\ud r.
	\end{align*}
	Using \eqref{exptAn} and \eqref{Fgrow}, we have
	\begin{align*}
		&\;\LL V^{m,i},V^{m,j}\RR_t\nonumber\\
		=&\;m^2\sum_{k=1}^{n}\int_0^t\big[\LL P_n\mcal D F(X^m(\kappa_m(r)))(\kappa_m(r)+\frac{T}{m}-r)P_nQ^{\frac{1}{2}}h_k,e_i\RR\nonumber\\
		&\;\cdot\LL P_n\mcal D F(X^m(\kappa_m(r)))(\kappa_m(r)+\frac{T}{m}-r)P_nQ^{\frac{1}{2}}h_k,e_j\RR\big]\ud r+\widetilde{R}_3^{m,n}(t)\nonumber\\
		=&\;T^2\sum_{k=1}^n\int_0^t\big[\LL P_n\mcal D F(X^m(\kappa_m(r)))P_nQ^{\frac{1}{2}}h_k,e_i\RR\LL P_n\mcal D F(X^m(\kappa_m(r)))P_nQ^{\frac{1}{2}}h_k,e_j\RR\big]\Big(1-\big(\frac{mr}{T}-\big\lfloor\frac{mr}{T}\big\rfloor\big)\Big)^2\,\ud r\nonumber\\
		&\;+\widetilde{R}_3^{m,n}(t)
	\end{align*}
	with $\|\widetilde{R}_3^{m,n}(t)\|\le K_{n}m^{-1}$ almost surely.
	
	It follows from the Taylor formula, \eqref{F''bound}, Lemma \ref{converge}, Lemma \ref{Xm-regularity}(ii) and $\|P_n(-A)^{\rho}\|_{\mcal L(H,H_n)}\le K_{n,\rho}$, $\rho\ge0$ that for any $k,l=1,\ldots,n$, 
	\begin{align*}
		&\;\mbf E\int_0^t\big|\LL P_n\mcal D F(X^m(\kappa_m(r)))P_nQ^{\frac{1}{2}}h_k,e_l\RR-\LL P_n\mcal D F(X(r))P_nQ^{\frac{1}{2}}h_k,e_l\RR\big|\ud r\nonumber\\
		=&\;\mbf E\int_0^t\Big|\Big\LL P_n\int_0^1\mcal D^2 F\big(X(r)+\lambda(X^m(\kappa_m(r))-X(r))\big)(X^m(\kappa_m(r))-X(r),P_nQ^{\frac{1}{2}}h_k)\ud \lambda,e_l\Big\RR \Big|\ud r\nonumber\\
		\le &\; K_n\int_0^t\|P_n(-A)^{\frac{\eta}{2}}\|_{\mcal L(H,H_n)}\mbf E\|X^m(\kappa_m(r))-X(r)\|\|P_nQ^{\frac{1}{2}}h_k
		\|\ud r\nonumber\\
		\le &\; K_{n,\eta}\int_0^t(\mbf E\|X^m(\kappa_m(r))-X^m(r)\|+\mbf E\|X^m(r)-X(r)\|)\ud r \nonumber\\
		\le &\; K_nm^{-1}.
	\end{align*}
	Combining the above formula, the H\"older inequality and the fact $|\LL P_n\mcal D F(X^m(\kappa_m(r)))P_nQ^{\frac{1}{2}}h_k,e_l\RR|\le K_n$, we arrive at
	\begin{align*}
		\mbf E\int_0^t&\Big|\LL P_n\mcal D F(X^m(\kappa_m(r)))P_nQ^{\frac{1}{2}}h_k,e_i\RR \LL P_n\mcal D F(X^m(\kappa_m(r)))P_nQ^{\frac{1}{2}}h_k,e_j\RR\\
		&-\LL P_n\mcal D F(X(r))P_nQ^{\frac{1}{2}}h_k,e_i\RR\LL P_n\mcal D F(X(r))P_nQ^{\frac{1}{2}}h_k,e_j\RR\Big|\ud r=0,~k,i,j=1,\ldots,n.
	\end{align*}
	Accordingly, we can use $\|\widetilde{R}_3^{m,n}(t)\|\le K_nm^{-1}$ and Proposition \ref{keyprop} to conclude that for any $t\in[0,T]$, 
	\begin{align}\label{sec3eq4}
		\LL V^{m,i}, V^{m,j}\RR_t\xrightarrow[\text{in}~\mbf L^1(\Omega)]{m\to\infty}\frac{T^2}{3}\sum_{k=1}^{n}\int_0^t\LL P_n\mcal D F(X(r))P_nQ^{\frac{1}{2}}h_k,e_i\RR\LL P_n\mcal D F(X(r))P_nQ^{\frac{1}{2}}h_k,e_j\RR \ud r.
	\end{align}
	Further, by \eqref{Vmi} and \eqref{exptAn},
	\begin{align*}
		\LL V^{m,i},\beta_j\RR_t=&\;m\sum_{k=1}^n\int_0^t \LL P_n\mcal D F(X^m(\kappa_m(r)))A_n^{-1}\big(e^{(\kappa_m(r)+\frac{T}{m}-r)A_n}-I_n\big)P_nQ^{\frac{1}{2}}h_j,e_i\RR\,\ud r\\
		=&\; T\int_0^t\LL P_n\mcal D F(X^m(\kappa_m(r)))P_nQ^{\frac{1}{2}}h_j,e_i\RR \big(1-\big(\frac{mr}{T}-\big\lfloor\frac{mr}{T}\big\rfloor\big)\big)\ud r+\mcal O(m^{-1}).
	\end{align*}
	Similar to the proof of  \eqref{sec3eq4},  we have
	\begin{align}\label{sec3eq5}
		\LL V^{m,i},\beta^j\RR_t\xrightarrow[\text{in}~\mbf L^1(\Omega)]{m\to\infty}\frac{T}{2}\int_0^t\LL P_n\mcal D F(X(r))P_nQ^{\frac{1}{2}}h_j,e_i\RR \ud r.
	\end{align}
	
	By \cite[Theorem 4-1]{Jacod1997}, \eqref{sec3eq4} and \eqref{sec3eq5}, one has $V^m\overset{d}{\Rightarrow} V$ in $\mbf C([0,T];\mbb R^n)$ as $m\to\infty$, where $V(t)=(V^1(t),\ldots,V^n(t))$, $t\in[0,T]$ is a $(\beta_1,\ldots,\beta_n)$-biased $\mcal F$-conditional Gaussian martingale on some extension of $(\Omega,\mcal F,\mbf P)$ and satisfies
	\begin{align}
		\LL V^i,\beta_j\RR_t&=\frac{T}{2}\int_0^t\LL P_n\mcal D F(X(r))P_nQ^{\frac{1}{2}}h_j,e_i\RR \ud r \label{sec3eq6},\\
		\LL V^i,V^j\RR_t&=\frac{T^2}{3}\sum_{k=1}^{n}\int_0^t\LL P_n\mcal D F(X(r))P_nQ^{\frac{1}{2}}h_k,e_i\RR\LL P_n\mcal D F(X(r))P_nQ^{\frac{1}{2}}h_k,e_j\RR \ud r.\label{sec3eq6'}
	\end{align}
	Then from \cite[Proposition 1-4]{Jacod1997} it follows  that $V^i$ can take the form of 
	\begin{align*}
		V^i(t)=\sum_{k=1}^n\int_0^t u^{i,k}(r)\ud\beta_k(r)+\sum_{k=1}^n\int_0^t v^{i,k}(r)\ud\widetilde{\beta}_k(r),
	\end{align*}
	where $(\widetilde{\beta}_1,\ldots,\widetilde{\beta}_n)$ is an $n$-dimensional Brownian motion independent of $(\beta_1,\ldots,\beta_n)$. Thus, it holds that
	\begin{align}\label{sec3eq7}
		\LL V^i,\beta_j\RR_t&=\int_0^t u^{i,j}(r)\ud r, ~	\LL V^i,V^j\RR_t=\sum_{k=1}^n\int_0^t u^{i,k}(r)u^{j,k}(r)\ud r+\sum_{k=1}^n\int_0^t v^{i,k}(r)v^{j,k}(r)\ud r.
	\end{align}
	Comparing \eqref{sec3eq6}--\eqref{sec3eq6'} and \eqref{sec3eq7}, we can take
	\begin{align*}
		u^{i,k}(r)=\frac{T}{2}\LL P_n\mcal D F(X(r))P_nQ^{\frac{1}{2}}h_k,e_i\RR,\quad v^{i,k}(r)=\frac{\sqrt{3}T}{6}\LL P_n\mcal D F(X(r))P_nQ^{\frac{1}{2}}h_k,e_i\RR.
	\end{align*}
	Since $\widetilde{V}^m$ is isometric to $V^m$ and $V^m\overset{d}{\Rightarrow} V$ in $\mbf C([0,T];\mbb R^n)$, we have $\widetilde{V}^m\overset{d}{\Rightarrow} \widetilde{V}$ in $\mbf C([0,T]; H_n)$ with
	\begin{align*}
		\widetilde{V}(t)&=\sum_{i=1}^n V^{i}(t)e_i
		=\frac{T}{2}\sum_{k=1}^n\int_0^t P_n\mcal D F(X(r))P_nQ^{\frac{1}{2}}h_k\ud \beta_k(r)+\frac{\sqrt{3}T}{6}\sum_{k=1}^n\int_0^t P_n\mcal D F(X(r))P_nQ^{\frac{1}{2}}h_k\ud \widetilde\beta_k(r)\\
		&=\frac{T}{2}\int_0^t P_n\mcal D F(X(r))P_n\ud W^{Q_n}(r)+\frac{\sqrt{3}T}{6}\int_0^t P_n\mcal D F(X(r))P_n\ud \widetilde{W}^{Q_n}(r).
	\end{align*}
	
	Further, Proposition \ref{propA2} yields
	\begin{align*}
		I_3^{m,n}(t)=\int_0^tE_n(t-s)d\widetilde{V}^m(s)=\mcal K_n(\widetilde{V}^m)(t).
	\end{align*}
	It is straightforward to verify that $\mcal K_n$ is continuous from $\mbf C([0,T];H_n)$ to itself. Thus, the continuous mapping theorem and $\widetilde{V}^m\overset{d}{\Rightarrow}\widetilde{V}$ in  $\mbf C([0,T];H_n)$ as $m\to\infty$ give $I_3^{m,n}\overset{d}{\Rightarrow}\mcal K_n(\widetilde{V})$ in  $\mbf C([0,T];H_n)$ as $m\to\infty$. Again applying Proposition \ref{propA2}, we have  $\mcal K_n(\widetilde{V})(t)=\int_0^t E_n(t-s)d\widetilde{V}(s)=I_3^{\infty,n}(t)$, $t\in[0,T]$. 
	
	\bigskip
	\textbf{Step 4: Convergence of $I_4 ^{m,n}(t)$ in $\mbf L^1(\Omega;H_n)$.}\\
	Recall that 
	\begin{align*}
		I_4^{m,n}(t)=\frac{m}{2}\int_0^tE(t-s)P_n\mcal D^2 F(X^m(\kappa_m(s)))\Big(\int_{\kappa_m(s)}^sE_n(s-r)P_n\ud W^{Q_n}(r),\int_{\kappa_m(s)}^sE_n(s-r)P_n\ud W^{Q_n}(r)\Big)\ud s.
	\end{align*}
	Plugging the expression of $W^{Q_n}$ and using the bilinearity of $\mcal D^2 F(X^m(\kappa_m(s)))$ yield
	\begin{align*}
		I_4^{m,n}(t)=&\;\frac{m}{2}\sum_{k=1}^n\sum_{l=1}^{n}\int_0^tE(t-s)P_n\mcal D^2 F(X^m(\kappa_m(s)))\Big(\int_{\kappa_m(s)}^sE_n(s-r)P_nQ^{\frac{1}{2}}h_k\ud \beta_k(r),\\
		&\;\qquad\qquad\int_{\kappa_m(s)}^sE_n(s-r)P_nQ^{\frac{1}{2}}h_l\ud \beta_l(r)\Big)\ud s.
	\end{align*}
	Noting that $E_n(s-r)P_nQ^{\frac{1}{2}}h_k=\sum_{i=1}^ne^{-(s-r)\lambda_i}\LL P_nQ^{\frac{1}{2}}h_k,e_i\RR e_i$, we have
	\begin{align*}
		I_4^{m,n}(t)=&\;\frac{m}{2}\sum_{k,l,i,j=1}^n\LL P_nQ^{\frac{1}{2}}h_k,e_i\RR\LL P_nQ^{\frac{1}{2}}h_l,e_j\RR\int_0^t\Big(E(t-s)P_n\mcal D^2 F(X^m(\kappa_m(s)))(e_i,e_j)e^{-(\lambda_i+\lambda_j)s}\\
		&\;\qquad\int_{\kappa_m(s)}^se^{\lambda_ir}\ud\beta_k(r)\int_{\kappa_m(s)}^se^{\lambda_jr}\ud\beta_l(r) \Big)\ud s.
	\end{align*}
	It follows from the It\^o integration by parts that for any $s\in(\kappa_m(s),\kappa_m(s)+\frac{T}{m})$, 
	\begin{align*}
		&\;\int_{\kappa_m(s)}^se^{\lambda_ir}\ud\beta_k(r)\int_{\kappa_m(s)}^se^{\lambda_jr}\ud\beta_l(r)\\
		=&\;\int_{\kappa_m(s)}^s\Big(\int_{\kappa_m(s)}^re^{\lambda_i\sigma}\ud\beta_k(\sigma)\Big)e^{\lambda_j r}\ud\beta_l(r)+\int_{\kappa_m(s)}^s\Big(\int_{\kappa_m(s)}^re^{\lambda_j\sigma}\ud\beta_l(\sigma)\Big)e^{\lambda_i r}\ud\beta_k(r)+\int_{\kappa_m(s)}^se^{(\lambda_i+\lambda_j)r}\delta_{kl}\ud r,
	\end{align*}
	where $\delta_{kl}=1$ if $k=l$ and $\delta_{kl}=0$ if $k\neq l$. Denote
	\begin{gather*}
		A_m^{k,l,i,j}(s)=mP_n\mcal D^2 F(X^m(\kappa_m(s)))(e_i,e_j)e^{-(\lambda_i+\lambda_j)s}\int_{\kappa_m(s)}^s\Big(\int_{\kappa_m(s)}^re^{\lambda_i\sigma}\ud\beta_k(\sigma)\Big)e^{\lambda_j r}\ud\beta_l(r),\\
		B_m^{k,l,i,j}(s)=mP_n\mcal D^2 F(X^m(\kappa_m(s)))(e_i,e_j)e^{-(\lambda_i+\lambda_j)s}\int_{\kappa_m(s)}^s\Big(\int_{\kappa_m(s)}^re^{\lambda_j\sigma}\ud\beta_l(\sigma)\Big)e^{\lambda_i r}\ud\beta_k(r).
	\end{gather*}
	Then we have $I_4^{m,n}(t)=\sum_{i=1}^3Z_1^{m,n}(t)$, where
	\begin{align*}
		Z_1^{m,n}(t)=&\;\frac{1}{2}\sum_{k,l,i,j=1}^n\LL P_nQ^{\frac{1}{2}}h_k,e_i\RR\LL P_nQ^{\frac{1}{2}}h_l,e_j\RR\int_0^tE_n(t-s)A_m^{k,l,i,j}(s)\ud s,\\
		Z_2^{m,n}(t)=&\;\frac{1}{2}\sum_{k,l,i,j=1}^n\LL P_nQ^{\frac{1}{2}}h_k,e_i\RR\LL P_nQ^{\frac{1}{2}}h_l,e_j\RR\int_0^tE_n(t-s)B_m^{l,k,j,i}(s)\ud s,\\
		Z_3^{m,n}(t)=&\;\frac{m}{2}\sum_{k,i,j=1}^{n}\LL P_nQ^{\frac{1}{2}}h_k,e_i\RR\LL P_nQ^{\frac{1}{2}}h_k,e_j\RR\\
		&\;\cdot\int_0^tE_n(t-s)P_n\mcal D^2 F(X^m(\kappa_m(s)))(e_i,e_j)\frac{1}{\lambda_i+\lambda_j}(1-e^{-(\lambda_i+\lambda_j)(s-\kappa_m(s))})\ud s.
	\end{align*}
	
	For any $k,l,i,j\in\{1,\ldots,n\}$, when $v<\kappa_m(s)$, it holds that
	\begin{align*}
		&\;\mbf E\Big(\big\LL E_n(t-s)A_m^{k,l,i,j}(s),E_n(t-v)A_m^{k,l,i,j}(v)\big\RR\big|\mcal F_{\kappa_m(s)}\Big)\\
		=&\; \Big\LL E_n(t-v)A_m^{k,l,i,j}(v),E_n(t-s)mP_n\mcal D^2 F(X^m(\kappa_ms)))(e_i,e_j)e^{-(\lambda_i+\lambda_j)s}\\
		&\;\cdot\mbf E\Big[\int_{\kappa_m(s)}^s\Big(\int_{\kappa_m(s)}^re^{\lambda_i\sigma}\ud\beta_k(\sigma)\Big)e^{\lambda_j r}\ud\beta_l(r)\Big]\Big\RR=0.
	\end{align*}
	Thus $\mbf E\big\LL E_n(t-s)A_m^{k,l,i,j}(s),E_n(t-v)A_m^{k,l,i,j}(v)\big\RR=0$ for $v<\kappa_m(s)$, which gives
	\begin{align}\label{sec3eq8}
		&\;\mbf E\Big\|\int_0^tE_n(t-s)A_m^{k,l,i,j}(s)\ud s\Big\|^2\nonumber\\
		=&\;\mbf E\int_0^t\int_0^t\big\LL E_n(t-s)A_m^{k,l,i,j}(s),E_n(t-v)A_m^{k,l,i,j}(v)\big\RR dv \ud s\nonumber\\
		=&\; 2\mbf E\int_0^t\int_0^s\big\LL E_n(t-s)A_m^{k,l,i,j}(s),E_n(t-v)A_m^{k,l,i,j}(v)\big\RR dv \ud s\nonumber\\
		=&\; 2\int_0^t\int_{\kappa_m(s)}^s\mbf E\big\LL E_n(t-s)A_m^{k,l,i,j}(s),E_n(t-v)A_m^{k,l,i,j}(v)\big\RR dv \ud s.
	\end{align}
	By the It\^o isometry, \eqref{F''bound}, $\|P_n(-A)^{\frac{\eta}{2}}\|_{\mcal L(H,H_n)}\le K_n$, for any $k,l,i,j\in\{1,\ldots,n\}$ and $s\in[0,T]$,
	\begin{align*}
		\mbf E\|A^{k,l,i,j}_m(s)\|^2\le Km^2e^{-2(\lambda_i+\lambda_j)s}\|P_n(-A)^{\frac{\eta}{2}}\|_{\mcal L(H,H_n)}^2\int_{\kappa_m(s)}^s\int_{\kappa_m(s)}^re^{2\lambda_i\sigma}d\sigma e^{2\lambda_j r}\ud r\le K_n.
	\end{align*}
	The above formula, \eqref{sec3eq8} and the H\"older inequality yield 
	$$\mbf E\Big\|\int_0^tE_n(t-s)A_m^{k,l,i,j}(s)\ud s\Big\|^2\le K_n\int_0^t\int_{\kappa_m(s)}^sdv\ud s\le K_nm^{-1}\to 0, ~\forall ~n\ge 1,$$
	which implies $Z_1^{m,n}(t)\xrightarrow[\text{in}~\mbf L^2(\Omega;H_n)]{m\to\infty}0$ for any $n\ge 1$ and $t\in[0,T]$, and similarly
	$Z_2^{m,n}(t)\xrightarrow[\text{in}~\mbf L^2(\Omega;H_n)]{m\to\infty}0$.
	
	Note that $\frac{1}{\lambda_i+\lambda_j}(1-e^{-(\lambda_i+\lambda_j)(s-\kappa_m(s))})=(s-\kappa_m(s))+\mcal O(m^{-2})$, which together with \eqref{F''bound} further leads to
	\begin{align*}
		&\;m\int_0^tE(t-s)P_n\mcal D^2 F(X^m(\kappa_m(s)))(e_i,e_j)\frac{1}{\lambda_i+\lambda_j}(1-e^{-(\lambda_i+\lambda_j)(s-\kappa_m(s))})\ud s\\
		=&\; T\int_0^t E(t-s)P_n\mcal D^2 F(X^m(\kappa_m(s)))(e_i,e_j)\big(\frac{ms}{T}-\big\lfloor\frac{ms}{T}\big\rfloor\big)\ud s+\mcal O(m^{-1}).
	\end{align*}
	In addition, from \eqref{F''continuity}, Lemma \ref{Xm-regularity}(i) and Lemma \ref{converge} we conclude that for any $i,j=1,\ldots,n$,
	\begin{align*}
		&\;\mbf E\int_0^t\big\|E(t-s)P_n\mcal D^2 F(X^m(\kappa_m(s)))(e_i,e_j)-E(t-s)P_n\mcal D^2 F(X(s))(e_i,e_j)\big\|\ud s\\
		\le &\; \|P_n(-A)^{\frac{\eta}{2}}\|_{\mcal L(H,H_n)}\int_0^t\mbf E\|X^m(\kappa_m(s))-X(s)\|\|e_i\|\|e_j\|_{\sigma}\ud s\le K_{n,\sigma,\eta}m^{-\frac{1}{2}}.
	\end{align*}
	Then we apply Proposition \ref{keyprop} to getting
	\begin{align*}
		&\;m\int_0^tE(t-s)P_n\mcal D^2 F(X^m(\kappa_m(s)))(e_i,e_j)\frac{1}{\lambda_i+\lambda_j}(1-e^{-(\lambda_i+\lambda_j)(s-\kappa_m(s))})\ud s\\
		&\;\xrightarrow[\text{in}~\mbf L^1(\Omega;H_n)]{m\to\infty}\frac{T}{2}\int_0^tE_n(t-s)P_n\mcal D^2 F(X(s))(e_i,e_j)\ud s.
	\end{align*}
	Combining this above relation and $Z_i^{m,n}(t)\xrightarrow[\text{in}~\mbf L^2(\Omega;H_n)]{m\to\infty}0$, $i=1,2$, we obtain that $I_4^{m,n}(t)$, in $\mbf L^1(\Omega;H_n)$,  converges to 
	\begin{align*}
		&\;\frac{T}{4}\sum_{k,i,j=1}^{n}\LL P_nQ^{\frac{1}{2}}h_k,e_i\RR\LL P_nQ^{\frac{1}{2}}h_k,e_j\RR\int_0^tE_n(t-s)P_n\mcal D^2 F(X(s))(e_i,e_j)\ud s\\
		=&\;\frac{T}{4}\int_{0}^{t}E(t-s)\sum_{k=1}^nP_n\mcal D^2 F(X(s))\big(\sum_{i=1}^{n}\LL P_nQ^{\frac{1}{2}}h_k,e_i\RR e_i,\sum_{j=1}^{n}\LL P_nQ^{\frac{1}{2}}h_k,e_j\RR e_j\big)\ud s\\
		=&\; I_4^{\infty,n}(t),
	\end{align*}  
	which completes the proof.
\end{proof}

\begin{lem}\label{Imntight}
	Let Assumptions \ref{assum1} and \ref{assum2} hold. Denote $\mcal H_n={\rm{span}}\{h_1,\ldots,h_k\}$. Then for any $n\in\mbb N^+$, $\big\{\big(\widetilde{U}^{m,n},I_0^{m,n},I_1^{m,n},I_2^{m,n},I_3^{m,n},I_4^{m,n},W^{Q_n},\widetilde{W}^{Q_n},X\big)\big\}_{m\ge1}$  is tight in $\mbf C([0,T];H_n)^{\otimes 6}\times \mbf C([0,T];\mcal H_n)^{\otimes 2}\times \mbf C([0,T];H)$.
\end{lem}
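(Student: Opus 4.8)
The plan is to reduce the joint tightness to componentwise tightness, using the standard fact that a family of Borel probability laws on a finite product of Polish spaces is tight provided each coordinate family is tight (choose compact sets in each factor capturing all but $\varepsilon$ of the mass and take their product). Since $\mbf C([0,T];H_n)$, $\mbf C([0,T];\mcal H_n)$ and $\mbf C([0,T];H)$ are all Polish, it suffices to establish tightness of each of the nine marginal families indexed by $m$. Three of the components, namely $W^{Q_n}$, $\widetilde{W}^{Q_n}$ and $X$, do not depend on $m$: each is a single Borel probability measure on a Polish space and is therefore tight by Ulam's theorem. Hence the task reduces to the six $m$-dependent components $\widetilde{U}^{m,n},I_0^{m,n},\ldots,I_4^{m,n}$, all valued in the finite-dimensional space $H_n\cong\mbb R^n$.

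For these I would invoke the Kolmogorov--Arzel\`a--Ascoli tightness criterion in $\mbf C([0,T];H_n)$. As each process starts at $0$, it suffices to exhibit, for every such component $Y^m$, exponents $p\ge 1$ and $\alpha>0$ and a constant $K_n$ independent of $m$ with
\[
\sup_{m\ge 1}\mbf E\|Y^m(t)-Y^m(s)\|^p\le K_n|t-s|^{1+\alpha},\qquad s,t\in[0,T].
\]
The decisive simplification is that $A_n=AP_n$ is bounded on $H_n$ with $\|A_n\|_{\mcal L(H_n)}=\lambda_n$, so $E_n$ is norm-continuous and $\|E_n(h)-I_n\|_{\mcal L(H_n)}\le\lambda_n h$; this removes the singular-kernel difficulties of the infinite-dimensional setting flagged in the introduction. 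For the drift-type terms $I_0^{m,n},I_1^{m,n},I_2^{m,n},I_4^{m,n}$ (and hence for $\widetilde{U}^{m,n}=I_0^{m,n}-\sum_{i=1}^4 I_i^{m,n}$) I would split a time increment as $Y^m(t)-Y^m(s)=\int_s^tE_n(t-r)[\cdots]\ud r+(E_n(t-s)-I_n)Y^m(s)$, bounding the first summand by $|t-s|$ times the $\mbf L^2(\Omega;H_n)$-supremum of the (uniformly $m$-bounded) integrand and the second by $\lambda_n|t-s|\,\|Y^m(s)\|_{\mbf L^2(\Omega;H_n)}$. The uniform $m$-bounds on the integrands are precisely those assembled in the proofs of Lemmas \ref{Utildemnbounded} and \ref{Imnconverge}, where the factor $m$ is absorbed through $s-\kappa_m(s)\le T/m$ together with \eqref{semigroup2} and \eqref{exptAn}. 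This yields the Lipschitz-in-time bound $\|Y^m(t)-Y^m(s)\|_{\mbf L^2(\Omega;H_n)}\le K_n|t-s|$, i.e.\ the Kolmogorov estimate with $p=2$, $\alpha=1$.

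For the martingale term I would use the representation $I_3^{m,n}=\mcal K_n(\widetilde{V}^m)$ from Proposition \ref{propA2} and estimate the increment by the same splitting combined with the Burkholder--Davis--Gundy inequality; the uniform control of the quadratic variation of $\widetilde{V}^m$ obtained in Step~3 of the proof of Lemma \ref{Imnconverge} then gives $\mbf E\|I_3^{m,n}(t)-I_3^{m,n}(s)\|^4\le K_n|t-s|^2$, i.e.\ $p=4$, $\alpha=1$. I expect the only real work to be the bookkeeping of these $m$-uniform time-increment bounds, chiefly the absorption of the factor $m$ for the drift terms and the control of the stochastic integral $I_3^{m,n}$ through BDG and the variation estimate; no new estimate is needed, since all ingredients already reside in Lemmas \ref{Utildemnbounded}, \ref{Imnconverge} and \ref{Xm-regularity}. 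Collecting the Kolmogorov bounds for the six $m$-dependent components with the trivial tightness of $W^{Q_n}$, $\widetilde{W}^{Q_n}$ and $X$, each marginal family is tight, whence the joint family is tight in $\mbf C([0,T];H_n)^{\otimes 6}\times\mbf C([0,T];\mcal H_n)^{\otimes 2}\times\mbf C([0,T];H)$.
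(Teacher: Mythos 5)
Your proposal is correct and, for the six $m$-dependent components, follows essentially the same route as the paper: reduce joint tightness to componentwise tightness, and for $I_0^{m,n},I_1^{m,n},I_2^{m,n},I_4^{m,n}$ obtain the uniform Lipschitz-in-time bound $\|Y^m(t)-Y^m(s)\|_{\mbf L^2(\Omega;H_n)}\le K_n|t-s|$ by the same splitting, exploiting $\|E_n(h)-I_n\|_{\mcal L(H_n)}\le \lambda_n h$ and absorbing the factor $m$ through $s-\kappa_m(s)\le T/m$; the paper then passes to tightness via the Kolmogorov continuity theorem, Arzel\`a--Ascoli and the Markov inequality, which is just the proof of the Kolmogorov tightness criterion you invoke. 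You diverge in two places. For $I_3^{m,n}$ the paper does no new estimate at all: it cites the convergence $I_3^{m,n}\overset{d}{\Rightarrow}I_3^{\infty,n}$ in $\mbf C([0,T];H_n)$ already established in Lemma \ref{Imnconverge} and concludes tightness from Prokhorov's theorem, whereas you propose a self-contained fourth-moment Kolmogorov bound through $I_3^{m,n}=\mcal K_n(\widetilde V^m)$ and BDG; this works (the quadratic variation density of $\widetilde V^m$ is uniformly bounded by $K_n$, giving $\mbf E\|\widetilde V^m(t)-\widetilde V^m(s)\|^4\le K_n|t-s|^2$, and the drift part of $\mcal K_n$ is Lipschitz), and you are right that the naive Lebesgue-integral estimate would only give $m^{1/2}|t-s|$ here, so the martingale structure must be used. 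For the $m$-independent components the paper constructs explicit compact sets (for $X$ this requires the compact embedding $\dot H^{\beta/2}\hookrightarrow H$ and a generalized Arzel\`a--Ascoli argument, since H\"older balls in $\mbf C([0,T];H)$ are not compact when $H$ is infinite-dimensional), while you simply invoke Ulam's theorem for a single Borel measure on a Polish space; that is a legitimate and genuinely shorter argument. One small imprecision: you group $\widetilde U^{m,n}$ with the drift-type terms and suggest it inherits the Lipschitz bound $p=2$, $\alpha=1$ ``hence'' from $I_0^{m,n},\dots,I_4^{m,n}$ with $i\ne 3$; since $\widetilde U^{m,n}$ also contains $I_3^{m,n}$, which is only H\"older-$\tfrac12$ in time, its tightness should instead be deduced from the tightness of all five summands (continuity of the summation map), or from combining the two Kolmogorov estimates with the smaller exponent. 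This does not affect the conclusion.
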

\begin{proof}
	It suffices to show that every component of $\big\{\big(\widetilde{U}^{m,n},I_0^{m,n},I_1^{m,n},I_2^{m,n},I_3^{m,n},I_4^{m,n},W^{Q_n},\widetilde{W}^{Q_n},X\big)\big\}_{m\ge1}$  is tight.	It follows from \eqref{Fgrow},  Lemma \ref{Utildemnbounded} and $\|E_n(t)-I_n\|_{\mcal L(H_n)}\le \lambda_nt$, $t\ge 0$ that
	\begin{align*}
		\|I_0^{m,n}(t)-I_0^{m,n}(s)\|_{\mbf L^2(\Omega;H_n)}\le&\; \Big\|\int_s^tE_n(t-r)P_n\mcal D F(X(r))\widetilde{U}^{m,n}(r)\ud r\Big\|_{\mbf L^2(\Omega;H_n)}\\
		&\;+\Big\|\int_0^s(E_n(t-s)-I_n)E_n(s-r)P_n\mcal D F(X(r))\widetilde{U}^{m,n}(r)\ud r\Big\|_{\mbf L^2(\Omega;H_n)}\\
		\le&\; K_n(t-s),~t>s.
	\end{align*}
	Then applying the Kolmogorov continuity theorem (cf. \cite[Theorem 2]{KolContinuity2018}) gives 
	\begin{align*}
		\sup_{m\ge 1}\bigg\|\sup_{t\neq s,t,s\in[0,T]}\frac{\|I^{m,n}_0(t)-I^{m,n}(s)\|}{|t-s|^{1/4}}\bigg\|_{\mbf L^2(\Omega)}\le K_{n,T},
	\end{align*}
	which together with $I_0^{m,n}(0)=0$ yields
	$\sup\limits_{m\ge1}\big\|\|I^{m,n}_0\|_{\mbf C([0,T];H_n)}\big\|_{\mbf L^2(\Omega)}\le K_{n,T}$. The above two formulas give
	\begin{align}\label{sec3eq9}
		\sup_{m\ge 1}\mbf E\|I^{m,n}_0\|_{\mbf C^{1/4}([0,T];H_n)}\le K_{n,T}.
	\end{align}
	Denote $C_R=\{f\in\mbf C([0,T];H_n):\|f\|_{\mbf C^{1/4}([0,T];H_n)}\le R\}$, $R>0$. 
	By the Arzel\'a--Ascoli theorem, for any $R>0$, $\bar{C}_R$, the closure of $C_R$, is a compact subset of $\mbf C([0,T];H_n)$. Using \eqref{sec3eq9} and the Markov inequality, we have that for any $m\ge 1$,
	\begin{align*}
		\mbf P(I_0^{m,n}\in (\bar{C}_R)^c)\le 	\mbf P(I_0^{m,n}\in (C_R)^c)\le  \frac{1}{R}\mbf E\|I_0^{m,n}\|_{\mbf C^{1/4}([0,T];H_n)}\le \frac{K_{n,T}}{R}\to 0~\text{as}~R\to\infty.
	\end{align*}
	This indicates that (the law of) $\{I_0^{m,n}\}_{m\ge 1}$ is tight in $\mbf C([0,T];H_n)$. Similarly, one can prove the tightness of $\{I_i^{m,n}\}_{m\ge 1}$  in $\mbf C([0,T];H_n)$  by verifying $\|I^{m,n}_i(t)-I^{m,n}_i(s)\|_{\mbf L^2(\Omega;H_n)}\le K_n|t-s|$ for $i=1,2,4$. By Lemma \ref{Imnconverge}, $I_3^{m,n}\overset{d}{\Rightarrow}I_3^{\infty,n}$ in $\mbf C([0,T];H_n)$ as $m\to\infty$, which yields the tightness of $\{I_3^{m,n}\}_{m\ge 1}$ in  $\mbf C([0,T];H_n)$ due to Prokhorov's theorem.
	
	By the BDG inequality, for any $p\ge1$, 
	\begin{align*}
		\|W^{Q_n}(t)-W^{Q_n}(s)\|_{\mbf L^p(\Omega;\mcal H_n)}+	\|\widetilde{W}^{Q_n}(t)-\widetilde{W}^{Q_n}(s)\|_{\mbf L^p(\Omega;\mcal H_n)}\le K_n|t-s|^{\frac{1}{2}}.
	\end{align*} 
	Following the argument for the tightness of $\{I_0^{m,n}\}_{m\ge1}$, we conclude that for any $n\ge1$, $W^{Q_n}$ and $\widetilde{W}^{Q_n}$ are tight in $\mbf C([0,T];\mcal H_n)$, viewed a family of constant random variables with $m$ as the parameter.
	
	Finally, we show the tightness of $X$ in $\mbf C([0,T];H)$. Let   $0<\alpha<\min(\frac{1}{2},\frac{\beta}{4})$. It follows from the Lemma \ref{Xregularity} and the Kolmogorov continuity theorem (cf.\ \cite[Theorem 2]{KolContinuity2018}) that $\mbf E[X]^p_{\mbf C^{\alpha}([0,T];\dot{H}^{\beta/2})}\le K$ for  $p\gg 1$.  This along with $\|X_0\|_{\beta}<\infty$ gives
	$\mbf E\|X\|^p_{\mbf C([0,T];\dot{H}^{\beta/2})}\le K$. In addition, we also get  $\mbf E[X]^p_{\mbf C^{\alpha}([0,T];H)}\le K$ due to $\|\cdot\|\le K_\beta \|\cdot\|_{\beta/2}$. 
	Denote $\widehat{C}_R:=\{f\in\mbf C([0,T];H):\|f\|_{\mbf C([0,T];\dot{H}^{\beta/2})}+[f]_{\mbf C^{\alpha}([0,T];H)}\le R\}$, $R>0$. By the generalized Arzel\'a--Ascoli theorem (cf.\ \cite[Theorem 47.1]{AAtheorem}) and the compact embedding $\dot{H}^{\beta/2}\hookrightarrow H$, we obtain that for any $R>0$, $\widehat{C}_R$ is a pre-compact subset of $\mbf C([0,T];H)$. Further, the Markov inequality yields
	\begin{align*}
		\mbf P(X\in \overline{\widehat{C}_R}^c)&\le \mbf P(X\in (\widehat{C}_R)^c)\le \mbf P(\|X\|_{\mbf C([0,T];\dot{H}^{\beta/2})}>R/2)+ \mbf P(\|X\|_{\mbf C^\alpha([0,T];H)}>R/2)\\
		&\le \frac{K}{R}\to 0~\text{as}~R\to\infty.
	\end{align*}
	This justifies the tightness of $X$ in $\mbf C([0,T];H)$. Thus the proof is complete. 
\end{proof}

\vspace{5mm}
\textit{Proof of Lemma \ref{Utildemn-converge}.}
 	Let $n$ be fixed in this proof. By Prokhorov's theorem and Lemma \ref{Imntight}, for any subsequence $\big\{\big(\widetilde{U}^{m',n},I_0^{m',n},I_1^{m',n},I_2^{m',n},I_3^{m',n},I_4^{m',n},W^{Q_n},\widetilde{W}^{Q_n},X\big)\big\}_{m'\ge1}$ 
	of 
	$\big\{\big(\widetilde{U}^{m,n},I_0^{m,n},I_1^{m,n},I_2^{m,n},$\\$I_3^{m,n},I_4^{m,n},W^{Q_n},\widetilde{W}^{Q_n},X\big)\big\}_{m\ge1}$, there exists a further subsequence 
 	$\big\{\big(\widetilde{U}^{m_k',n},I_0^{m_k',n},I_1^{m_k',n},I_2^{m'_k,n},I_3^{m'_k,n},$\\$I_4^{m'_k,n},W^{Q_n},\widetilde{W}^{Q_n},X\big)\big\}_{k\ge1}$ converging in distribution to some
 	$\big(\bar{U}^{\infty,n},\bar I_0^{\infty,n},\bar I_1^{\infty,n},\bar I_2^{\infty,n},\bar I_3^{\infty,n},
 	\bar I_4^{\infty,n},B^{Q_n},$\\$\widetilde{B}^{Q_n},\bar X\big)$ in $\mbf C([0,T];H_n)^{\otimes 6}\times \mbf C([0,T];\mcal H_n)^{\otimes 2}\times \mbf C([0,T];H)$, maybe in some new probability space $(\bar{\Omega},\bar{\mcal F},\bar{\mbf P})$, where $(B^{Q_n},\widetilde{B}^{Q_n},\bar X)\overset{d}{=}(W^{Q_n},\widetilde{W}^{Q_n},X)$ in $\mbf C([0,T];\mcal H_n)^{\otimes 2}\times\mbf C([0,T];H)$.
 	This combined with  $\widetilde{U}^{m_k',n}(t)-I_0^{m_k',n}(t)+\sum_{i=1}^4 I_i^{m'_k,n}(t)=0$ yields 
 	\begin{align}\label{sec3eq11}
 		\bar{U}^{\infty,n}(t)-\bar{I}_0^{\infty,n}(t)+\sum_{i=1}^4 \bar I_i^{\infty,n}(t)=0,\quad\forall~t\in[0,T].
 	\end{align}

 	Define the operator $J:\mbf C([0,T];H)\times\mbf C([0,T];H_n)\to \mbf C([0,T];H)$ by 
	\[
	J(f,g)(t):=\int_0^tE_n(t-s)P_n\mcal D F(f(s))g(s)\ud s,\quad t\in[0,T].
	\] 
	It is straightforward to verify that $J$ is a continuous operator. Thus the continuous mapping theorem and $( X,\widetilde{U}^{m_k',n})\overset{d}{\Rightarrow}(\bar X,\bar{U}^{\infty,n})$ in $\mbf C([0,T];H)\times\mbf C([0,T];H_n)$ as $k\to\infty$ give that
 	$I_0^{m_k',n}=J(X,\widetilde{U}^{m_k',n})\overset{d}{\Rightarrow} J(\bar{X},\bar{U}^{\infty,n})$ in $\mbf C([0,T];H_n)$. Accordingly, one has
 	\begin{align*}
 		\bar{I}_0^{\infty,n}(t)=\int_0^tE_n(t-s)P_n\mcal D F(\bar{X}(s))\bar{U}^{\infty,n}(s)\ud s,~t\in[0,T].
 	\end{align*}
 	Further, by Lemma \ref{Imnconverge}, it holds that
 	$I_1^{m'_k,n}(t)\overset{d}{\Rightarrow}\frac{T}{2}\int_0^tE_n(t-s)P_n\mcal D F(X(s))A_nX(s)\ud s$, which together with 
 	$I_1^{m'_k,n}\overset{d}{\Rightarrow}\bar I_1^{\infty,n}$ in $\mbf C([0,T];H_n)$ and $X\overset{d}{=}\bar{X}$ in $\mbf C([0,T];H_n)$ yields
 	\begin{align}\label{sec3eq10}
 		\bar{I}_1^{\infty,n}(t)=\frac{T}{2}\int_0^tE_n(t-s)P_n\mcal D F(\bar X(s))A_n\bar X(s)\ud s,~t\in[0,T].
 	\end{align}
 	Similarly to the proof of \eqref{sec3eq10}, using  $(B^{Q_n},\widetilde{B}^{Q_n},\bar X)\overset{d}{=}(W^{Q_n},\widetilde{W}^{Q_n},X)$ in $\mbf C([0,T];\mcal H_n)^{\otimes 2}\times\mbf C([0,T];H)$, we obtain that for $t\in[0,T]$,
 	\begin{align*}
 		\bar I_2^{\infty,n}(t) &=\frac{T}{2} \int_0^tE_n(t-s)P_n\mcal D F(\bar X(s))P_nF(\bar X(s))\ud s,  \\
 		\bar I_3^{\infty,n}(t) &=\frac{T}{2}\int_0^tE_n(t-s) P_n\mcal D F(\bar X(s))P_n\ud  B^{Q_n}(s) +\frac{\sqrt{3}T}{6}\int_0^tE_n(t-s)P_n\mcal D F(\bar X(s))P_n\ud \widetilde{B}^{Q_n}(s), \\
 		\bar I_4^{\infty,n}(t) &=\frac{T}{4}\int_0^tE_n(t-s) \sum_{k=1}^{n}P_n\mcal D^2 F(\bar{X}(s))(P_nQ^{\frac{1}{2}}h_k, P_nQ^{\frac{1}{2}}h_k)\ud s.
 	\end{align*}
 	Plugging the expressions of $I_i^{\infty,n}$, $i=0,1,2,3,4$, into \eqref{sec3eq11} and using the fact 
 	$(B^{Q_n},\widetilde{B}^{Q_n},\bar X)\overset{d}{=}(W^{Q_n},\widetilde{W}^{Q_n},X)$ in $\mbf C([0,T];\mcal H_n)^{\otimes 2}\times\mbf C([0,T];H)$, we arrive at $\bar{U}^{\infty,n}(t)\overset{d}{=} \widetilde{U}^{\infty,n}(t)$ in $H_n$ due to the uniqueness of weak solution of \eqref{Utildeinftyn}, and thus have
 	$\widetilde{U}^{m'_k,n}(t)\overset{d}{\Rightarrow}\widetilde{U}^{\infty,n}(t)$ in $H_n$.  Since any subsequence of $\{\widetilde{U}^{m,n}(t)\}_{m\ge 1}$ contains further some  subsequence converging to $\widetilde{U}^{\infty,n}(t)$ in distribution in $H_n$,  $\widetilde{U}^{m,n}(t)\overset{d}{\Rightarrow}\widetilde{U}^{\infty,n}(t)$ in $H_n$ as $m\to\infty$. This finishes the proof.  \hfill $\square$

\subsection{Proof of Lemma \ref{Utildeinftyn-converge}}
Note that the $\|\cdot\|$-norm of the last term in \eqref{U} is finite almost surely. In fact, it follows from \eqref{semigroup1}, \eqref{F''bound} and \eqref{trQfinite} that
	\begin{align*}
		&\;\Big\|\int_{0}^{t}E(t-s)\sum_{k=1}^{\infty}\mcal D^2 F(X(s))(Q^{\frac{1}{2}}h_k, Q^{\frac{1}{2}}h_k)\ud s\Big\|\\
		\le&\; L\int_0^t\|(-A)^{-\frac{\eta}{2}}E(t-s)\|_{\mcal L(H)}\sum_{k=1}^\infty\|Q^{\frac{1}{2}}h_k\|^2\ud s\le K_{\eta,T}\|Q^{\frac{1}{2}}\|_{\mcal L_2(H)}^2.
	\end{align*}
	By a standard argument, we have $\|U(t)\|_{\mbf L^2(\Omega;H)}\le K_T$, $t\in[0,T]$. Further, using $E_n(\cdot)P_n=E(\cdot)P_n$, we decompose $\widetilde{U}^{\infty,n}(t)-U(t)$ into
\begin{align}\label{sec3eq12}
	\widetilde{U}^{\infty,n}(t)-U(t)=&\;\int_0^t E(t-s)P_n\mcal D F(X(s))(\widetilde{U}^{\infty,n}(s)-U(s))\ud s\notag\\
	&\;+T_0^n(t)-\frac{T}{2}\sum_{i=1}^{3}T_i^n(t)-\frac{\sqrt{3}T}{6}T_4^n(t)-\frac{T}{4}T_5^n(t),~t\in[0,T],
\end{align}
	where
	\begin{align*}
		T_0^n(t)=&\;\int_{0}^{t}E(t-s)(P_n-I)\mcal D F(X(s))U(s)\ud s, \\
		T_1^n(t)=&\;\int_0^t(P_n-I)E(t-s)\mcal D F(X(s))A_nX(s)\ud s+\int_0^tE(t-s)\mcal D F(X(s))(P_n-I)AX(s)\ud s, \\
		T_2^n(t)=&\;\int_0^t(P_n-I)E(t-s)\mcal D F(X(s))P_nF(X(s))\ud s+\int_0^tE(t-s)\mcal D F(X(s))(P_n-I)F(X(s))\ud s,\\
		T_3^n(t)=&\;\int_0^t(P_n-I)E(t-s)\mcal D F(X(s))P_n\ud W^{Q_n}(s)+\int_0^tE(t-s)\mcal D F(X(s))(P_n-I)\ud W^{Q_n}(s)\\
		&\;-\int_0^tE(t-s)\mcal D F(X(s))\ud W^{Q-Q_n}(s)\\
		=:&\;T_{3,1}^n(t)+T_{3,2}^n(t)+T_{3,3}^n(t),\\
		T_4^n(t)=&\;\int_0^t(P_n-I)E(t-s)\mcal D F(X(s))P_n\ud \widetilde W^{Q_n}(s)+\int_0^tE(t-s)\mcal D F(X(s))(P_n-I)\ud \widetilde W^{Q_n}(s)\\
		&\;-\int_0^tE(t-s)\mcal D F(X(s))\ud \widetilde W^{Q-Q_n}(s),
		\\	
		T_5^n(t)=&\; \int_0^t(P_n-I)E(t-s)\sum_{k=1}^n\mcal D^2 F(X(s))(P_nQ^{\frac{1}{2}}h_k,P_nQ^{\frac{1}{2}}h_k)\ud s\\
		&\;-\int_0^tE(t-s)\sum_{k=n+1}^\infty \mcal D^2 F(X(s))(P_nQ^{\frac{1}{2}}h_k,P_nQ^{\frac{1}{2}}h_k)\ud s\\
		&\;+\int_0^tE(t-s)\sum_{k=1}^\infty\big(\mcal D^2 F(X(s))(P_nQ^{\frac{1}{2}}h_k,P_nQ^{\frac{1}{2}}h_k)-\mcal D^2 F(X(s))(Q^{\frac{1}{2}}h_k,Q^{\frac{1}{2}}h_k)\big)\ud s\\
		=:&\;T_{5,1}^n(t)+T_{5,2}^n(t)+T_{5,3}^n(t).
	\end{align*}
	
	It follows from \eqref{semigroup1}, \eqref{Fgrow}, $\|(-A)^{-\rho}(P_n-I)\|_{\mcal L(H)}\le \lambda_{n+1}^{-\rho}$, $\rho\ge 0$ and $\|U(t)\|_{\mbf L^2(\Omega;H)}\le K$ that
	\begin{align*}
		\|T_0^n(t)\|_{\mbf L^2(\Omega;H)}&\le L\int_0^t\|(-A)^{-\rho}(P_n-I)\|_{\mcal L(H)}\|(-A)^\rho E(t-s)\|_{\mcal L(H)}\|U(t)\|_{\mbf L^2(\Omega;H)}\ud s\\
		&\le K\lambda_{n+1}^{-\rho},~\forall~\rho\in(0,1).
	\end{align*}
	
	By \eqref{semigroup1} and \eqref{F'}, for $\rho\in(0,1-\frac{\delta}{2})$, 
	\begin{align*}
		\|T_1^n(t)\|\le&\; L\int_0^t\|(-A)^{-\rho}(P_n-I)\|_{\mcal L(H)}\|(-A)^{\rho+\frac{\delta}{2}} E(t-s)\|_{\mcal L(H)}(1+\|X(s)\|_1)\|X(s)\|_1\ud s\\
		&\;+L\int_0^t\|(-A)^{\frac{\delta}{2}} E(t-s)\|_{\mcal L(H)}(1+\|X(s)\|_1)\|(-A)^{-\frac{\beta-1}{2}}(P_n-I)\|_{\mcal L(H)}\|X(s)\|_{\beta}\ud s\\
		\le &\; K\lambda_{n+1}^{-\rho}\int_0^t(t-s)^{-(\rho+\frac{\delta}{2})}(1+\|X(s)\|_1^2)\ud s+K\lambda_{n+1}^{-\frac{\beta-1}{2}}\int_0^t(t-s)^{-\frac{\delta}{2}}(1+\|X(s)\|_\beta^2)\ud s.
	\end{align*}
	Then Lemma \ref{Xregularity} yields
	\begin{align*}
		\sup_{t\in[0,T]}\|T_1^n(t)\|_{\mbf L^2(\Omega;H)}\le K_\rho(\lambda_{n+1}^{-\rho}+\lambda_{n+1}^{-\frac{\beta-1}{2}}),~\rho\in(0,1-\frac{\delta}{2}).
	\end{align*}
	Similarly, one has
	\begin{align*}
		\|T_2^n(t)\|\le&\; K\int_0^t\|(-A)^{-\frac{1}{2}}(P_n-I)\|_{\mcal L(H)}\|(-A)^{\frac{1}{2}} E(t-s)\|_{\mcal L(H)}(1+\|X(s)\|)\ud s\\
		&\;+K\int_0^t\|(-A)^{\frac{\delta}{2}} E(t-s)\|_{\mcal L(H)}(1+\|X(s)\|_1)\|(-A)^{-\frac{1}{2}}(P_n-I)\|_{\mcal L(H)}(1+\|X(s)\|)\ud s\\
		\le &\; K\lambda_{n+1}^{-\frac{1}{2}}\int_0^t(t-s)^{-\frac{1}{2}}(1+\|X(s)\|)\ud s+K\lambda_{n+1}^{-\frac{1}{2}}\int_0^t(t-s)^{-\frac{\delta}{2}}(1+\|X(s)\|_1^2)\ud s,
	\end{align*}
	which combined with Lemma \ref{Xregularity} leads to $\sup\limits_{t\in[0,T]}\|T_2^n(t)\|_{\mbf L^2(\Omega;H)}\le K \lambda_{n+1}^{-\frac{1}{2}}$.
	
	Applying the It\^o isometry, \eqref{semigroup1}, \eqref{Fgrow} and \eqref{trQfinite}, we get
	\begin{align*}
		\mbf E\|T_{3,1}^n(t)\|^2=&\;\int_0^t\|(P_n-I)E(t-s)\mcal D F(X(s))P_nQ_n^{\frac{1}{2}}\|_{\mcal L_2(H)}^2\ud s\\
		\le &\; K\int_0^t\|(-A)^{-\rho}(P_n-I)\|_{\mcal L(H)}^2\|(-A)^\rho E(t-s)\|_{\mcal L(H)}^2\|Q_n^{\frac{1}{2}}\|_{\mcal L_2(H)}^2\ud s
		\le  K\lambda_{n+1}^{-2\rho},~\rho\in(0,\frac{1}{2}),\\
		\mbf E\|T_{3,2}^n(t)\|^2=&\;\int_0^t\|E(t-s)\mcal D F(X(s))(P_n-I)Q_n^{\frac{1}{2}}\|_{\mcal L_2(H)}^2\ud s\\
		\le &\; K \|(-A)^{-\frac{\beta-1}{2}}(P_n-I)\|_{\mcal L(H)}^2\|(-A)^{\frac{\beta-1}{2}}Q_{n}^{\frac{1}{2}}\|_{\mcal L_2(H)}^2 \le K\lambda_{n+1}^{-(\beta-1)}\|(-A)^{\frac{\beta-1}{2}}Q^{\frac{1}{2}}\|_{\mcal L_2(H)}^2,\\
		\mbf E\|T_{3,3}^n(t)\|^2=&\;\int_0^t\|E(t-s)\mcal D F(X(s))(Q-Q_n)^{\frac{1}{2}}\|_{\mcal L_2(H)}^2\ud s\le K\sum_{k=n+1}^{\infty}\|Q^{\frac{1}{2}}h_k\|^2\to 0~\text{as}~n\to\infty.
	\end{align*}
	In this way, it holds that $\lim\limits_{n\to\infty}\sup\limits_{t\in[0,T]}\|T_3^n(t)\|_{\mbf L^2(\Omega;H)}=0$ and similarly $\lim\limits_{n\to\infty}\sup\limits_{t\in[0,T]}\|T_4^n(t)\|_{\mbf L^2(\Omega;H)}=0$.
	
	From \eqref{semigroup1}, \eqref{F''bound} and \eqref{trQfinite}, we derive that for any $\rho\in(0,1-\frac{\eta}{2})$,
	\begin{align*}
		\|T_{5,1}^n(t)\|\le K\int_0^t\|(-A)^{-\rho}(P_n-I)\|_{\mcal L(H)}\|(-A)^{\rho+\frac{\eta}{2}}E(t-s)\|_{\mcal L(H)}\sum_{k=1}^n\|Q^{\frac{1}{2}}h_k\|^2\ud s\le K\lambda_{n+1}^{-\rho}
	\end{align*} 
	and
	\begin{align*}
		\|T_{5,2}^n(t)\|\le K\int_0^t\|(-A)^{\frac{\eta}{2}}E(t-s)\|_{\mcal L(H)}\sum_{k=n+1}^\infty\|Q^{\frac{1}{2}}h_k\|^2\ud s\le K\sum_{k=n+1}^\infty\|Q^{\frac{1}{2}}h_k\|^2.
	\end{align*}
	Combining the bilinearity of $\mcal D^2 F(X(s))$, \eqref{semigroup1} and Assumption \ref{assum1},  we obtain
	\begin{align*}
		\|T_{5,3}^n(t)\|=&\;\Big\|\int_0^tE(t-s)\sum_{k=1}^\infty \mcal D^2 F(X(s))((P_n+I)Q^{\frac{1}{2}}h_k,(P_n-I)Q^{\frac{1}{2}}h_k)\ud s\Big\|\\
		\le &\;K\int_0^t\|(-A)^{\frac{\eta}{2}}E(t-s)\|_{\mcal L(H)}\sum_{k=1}^\infty \|Q^{\frac{1}{2}}h_k\|\|(P_n-I)Q^{\frac{1}{2}}h_k\|\ud s\\
		\le &\;K\int_0^t(t-s)^{-\frac{\eta}{2}}\|(-A)^{-\frac{\beta-1}{2}}(P_n-I)\|_{\mcal L(H)}\sum_{k=1}^\infty\|(-A)^{\frac{\beta-1}{2}}Q^{\frac{1}{2}}h_k\|^2\ud s\\
		\le&\; K\lambda_{n+1}^{-\frac{\beta-1}{2}}\|(-A)^{\frac{\beta-1}{2}}Q^{\frac{1}{2}}\|_{\mcal L_2(H)}^2.
	\end{align*}
	Consequently, $\lim\limits_{n\to\infty}\sup\limits_{t\in[0,T]}\|T_5^n(t)\|_{\mbf L^2(\Omega;H)}=0$.
	
	By \eqref{sec3eq12} and \eqref{Fgrow},
	\begin{align*}
		\|\widetilde{U}^{\infty,n}(t)-U(t)\|_{\mbf L^2(\Omega;H)}\le K\int_0^t 	\|\widetilde{U}^{\infty,n}(s)-U(s)\|_{\mbf L^2(\Omega;H)}\ud s+K\sum_{i=0}^5\sup_{t\in[0,T]}\|T_i^n(t)\|_{\mbf L^2(\Omega;H)},
	\end{align*}
	which yields $\lim\limits_{n\to\infty}\sup\limits_{t\in[0,T]}	\|\widetilde{U}^{\infty,n}(t)-U(t)\|_{\mbf L^2(\Omega;H)}=0$ due to the Gronwall inequality and previous estimates for $T_i^n$, $i=0,\ldots,5$. This implies the conclusion of Lemma \ref{Utildeinftyn-converge}. \hfill $\square$

\section{Applications of main result}
\label{Sec5}

In this section, we present two applications of Theorem \ref{maintheorem}.

\subsection{Asymptotic error distribution of a fully discrete AEE method}

Let $X^{m,n}$ be the fully discrete numerical solution based on the temporal AEE method and spatial spectral Galerkin method, i.e., $X^{m,n}$ is defined by
\begin{align*}
X^{m,n}(t)=E_n(t)P_nX_0+\int_0^tE_n(t-s)P_nF(X^{m,n}(\kappa_m(s)))\ud s+\int_0^tE_n(t-s)P_n\ud W^Q(s),~t\in[0,T]. 
\end{align*}
Then we immediately obtain the asymptotic error distribution of the above fully discrete method from Theorem \ref{maintheorem}.

\begin{cor}\label{Cor1}
Let Assumptions \ref{assum1} and \ref{assum2} hold. Moreover, assume that $\lambda_{n}\sim n^\alpha~(n\to\infty)$ for some $\alpha>0$. Then for any $\iota>\frac{2}{\alpha\beta}$ and $t\in[0,T]$, $m(X^{m,\lfloor m^\iota\rfloor}(t)-X(t))\overset{d}{\Rightarrow}U(t)$ in $H$ as $m\to\infty$.
\end{cor}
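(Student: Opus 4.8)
The plan is to reduce the fully discrete error to the purely temporal error already handled by Theorem \ref{maintheorem}, by showing that the extra spatial discretization error is negligible after rescaling by $m$, and then to conclude with Slutzky's theorem exactly as in the proof of Theorem \ref{maintheorem}. Writing $n=\lfloor m^\iota\rfloor$ throughout, I would split
\begin{align*}
m\big(X^{m,n}(t)-X(t)\big)=U^m(t)+m\big(X^{m,n}(t)-X^m(t)\big),
\end{align*}
where $U^m(t)=m(X^m(t)-X(t))\overset{d}{\Rightarrow}U(t)$ by Theorem \ref{maintheorem}. Hence it suffices to prove $m(X^{m,\lfloor m^\iota\rfloor}(t)-X^m(t))\to 0$ in probability, and I will in fact establish the stronger convergence in $\mbf L^2(\Omega;H)$.

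The core of the argument is a uniform-in-$m$ spatial error estimate for $e^{m,n}(t):=X^{m,n}(t)-X^m(t)$. Using $E_n(t)P_n=E(t)P_n=P_nE(t)$ and subtracting \eqref{AEE} from the definition of $X^{m,n}$, I would write
\begin{align*}
e^{m,n}(t)=&\;(P_n-I)E(t)X_0+(P_n-I)\int_0^tE(t-s)\,\ud W^Q(s)\\
&\;+\int_0^tE(t-s)P_n\big(F(X^{m,n}(\kappa_m(s)))-F(X^m(\kappa_m(s)))\big)\,\ud s\\
&\;+\int_0^t(P_n-I)E(t-s)F(X^m(\kappa_m(s)))\,\ud s.
\end{align*}
The first (deterministic) term is bounded by $\lambda_{n+1}^{-\beta/2}\|X_0\|_{\dot H^\beta}$ via $\|(-A)^{-\beta/2}(P_n-I)\|_{\mcal L(H)}\le\lambda_{n+1}^{-\beta/2}$ and contractivity of $E$. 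The stochastic term is $(P_n-I)$ applied to the stochastic convolution, so its $\mbf L^2(\Omega;H)$-norm is at most $\lambda_{n+1}^{-\beta/2}$ times its $\mbf L^2(\Omega;\dot H^\beta)$-norm, which is bounded by Lemma \ref{WQ} with $\gamma=\beta$. The Lipschitz term is controlled by $L\int_0^t\|e^{m,n}(\kappa_m(s))\|_{\mbf L^2(\Omega;H)}\,\ud s\le L\int_0^t\sup_{r\le s}\|e^{m,n}(r)\|_{\mbf L^2(\Omega;H)}\,\ud s$ using \eqref{Fgrow} and $\|E(t-s)P_n\|_{\mcal L(H)}\le1$. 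Taking the supremum over $[0,t]$ and applying the Gronwall inequality then yields, uniformly in $m$,
\begin{align*}
\sup_{t\in[0,T]}\|X^{m,n}(t)-X^m(t)\|_{\mbf L^2(\Omega;H)}\le K\lambda_{n+1}^{-\theta},
\end{align*}
where $\theta$ may be taken equal to $\beta/2$ when $\beta<2$ and any $\theta<1=\beta/2$ when $\beta=2$.

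The delicate term—and what I expect to be the main obstacle—is the drift projection $\int_0^t(P_n-I)E(t-s)F(X^m(\kappa_m(s)))\,\ud s$: since \eqref{Fgrow} only guarantees $F:H\to H$, there is no spatial smoothness of $F(X^m)$ to exploit directly. I would instead factor $(P_n-I)E(t-s)=[(-A)^{-\rho}(P_n-I)]\,[(-A)^{\rho}E(t-s)]$ and bound it, via \eqref{semigroup1}, by $\lambda_{n+1}^{-\rho}(t-s)^{-\rho}\|F(X^m(\kappa_m(s)))\|$, which is integrable and $\mbf L^2$-bounded (by \eqref{Fgrow} and Lemma \ref{Xm-regularity}(i)) precisely for $\rho\in(0,1)$. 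This $\rho<1$ integrability constraint is exactly why the attainable rate caps at $\beta/2$ and why the endpoint $\beta=2$ requires the strict inequality. Finally, since $\lambda_n\sim n^\alpha$ and $n=\lfloor m^\iota\rfloor$, one has $\lambda_{n+1}^{-\theta}\sim m^{-\iota\alpha\theta}$, so $m\|e^{m,\lfloor m^\iota\rfloor}(t)\|_{\mbf L^2(\Omega;H)}\le Km^{\,1-\iota\alpha\theta}$; the hypothesis $\iota>\tfrac{2}{\alpha\beta}$ gives $\iota\alpha\beta/2>1$, so I can choose an admissible $\theta$ with $\iota\alpha\theta>1$ (namely $\theta=\beta/2$ if $\beta<2$, and some $\theta<1$ close to $1$ if $\beta=2$), forcing the bound to $0$. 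Thus $m(X^{m,\lfloor m^\iota\rfloor}(t)-X^m(t))\to0$ in $\mbf L^2(\Omega;H)$, hence in probability, and Slutzky's theorem (cf.\ \cite[Theorem 13.18]{Klenke}) combined with Theorem \ref{maintheorem} yields $m(X^{m,\lfloor m^\iota\rfloor}(t)-X(t))\overset{d}{\Rightarrow}U(t)$ in $H$.
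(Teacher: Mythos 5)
Your proposal is correct and follows essentially the same route as the paper: establish the uniform-in-$m$ spatial error bound $\sup_{t}\|X^{m,n}(t)-X^m(t)\|_{\mbf L^2(\Omega;H)}\le K\lambda_{n+1}^{-\min(\beta/2,\,1-\epsilon)}$, convert it via $\lambda_n\sim n^\alpha$ and $n=\lfloor m^\iota\rfloor$ into a rate $m^{1-\iota\alpha\theta}$ with $\iota\alpha\theta>1$, and conclude with Theorem \ref{maintheorem} and Slutzky's theorem. The only difference is that you spell out the decomposition and the $\rho<1$ integrability constraint behind the $\epsilon$-loss at $\beta=2$, which the paper leaves as "one can show"; your details are consistent with its stated bound.
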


\begin{proof}
Using Lemmas \ref{WQ} and \ref{Xm-regularity}, one can show that for $\epsilon\ll 1$,
$$\sup_{t\in[0,T]}\sup_{m\ge 1}\|X^{m,n}(t)-X^{m}(t)\|_{\mbf L^2(\Omega;H)}\le K\big(\lambda_{n+1}^{-\frac{\beta}{2}}+\lambda_{n+1}^{-(1-\epsilon)}\big).$$
Then  $\lambda_{n}\sim n^\alpha~(n\to\infty)$ gives
\begin{align}\label{sec5eq1}
	\sup_{t\in[0,T]}m\|X^{m,\lfloor m^\iota\rfloor}(t)-X^{m}(t)\|_{\mbf L^2(\Omega;H)}\le Km^{1-\alpha\iota\min(\frac{\beta}{2},1-\epsilon)},\quad m\gg 1.
\end{align}
For both $\beta\in(1,2)$ and $\beta=2$, there is always $\epsilon\ll 1$ such that $\alpha\iota\min(\frac{\beta}{2},1-\epsilon)>1$ in view of $\iota>\frac{2}{\alpha\beta}$.  Thus, for any $t\in[0,T]$, $\|m(X^{m,\lfloor m^\iota\rfloor}(t)-X^{m}(t))\|$ converges to $0$ in probability, which combined with Theorem \ref{maintheorem} and Slutzky's theorem yields the desired conclusion. 
\end{proof}

\subsection{Asymptotic error distribution of  AEE method for SODE with additive noise}
Another direct application comes from   the finite-dimensional  version of Theorem \ref{maintheorem}.
Consider the following SODE
\begin{align}\label{SODE}
	\begin{cases}
	\ud Y(t)=C Y(t)\ud t+b(Y(t))\ud t+\ud W(t),~t\in[0,T] ,\\
	Y(0)=Y_0\in\mbb R^d,
	\end{cases}
\end{align} 
where $C\in\mbb R^{d\times d}$ is a negative definite matrix, $W$ is a $d$-dimensional standard Brownian motion, and $b:\mbb R^d\to \mbb R^d$ is Lipschitz continuous.  The AEE method for \eqref{SODE} reads 
\begin{align*}
	Y^m(t)=e^{Ct}Y_0+\int_0^te^{(t-s)C}b(Y^m(\kappa_m(s)))\ud s+\int_0^te^{(t-s)C}\ud W(s),~t\in[0,T].
\end{align*}
Note that \eqref{SODE} can be viewed a degenerated SPDE in the form of \eqref{SPDE} and $X^m$ is degenerated as $Y^m$, by letting $H=\mbb R^d$ and $Q\in\mcal L(\mbb R^d)$ be the identity matrix. Thus, as a direct application of Theorem \ref{maintheorem} in the finite-dimensional setting, we have the following result. 
\begin{cor}\label{Cor2}
	Let $C$ be a negative definite matrix and $b$ be Lipschitz  continuous.
If $b$ is three times continuously differentiable with $b^{(i)}$ being bounded for $i=1,2,3$, then $m(Y^m(t)-Y(t))\overset{d}{\Rightarrow} M(t)$ in $\mbb R^d$ as $m\to\infty$. Here, $M$ is the solution of the following SODE
\begin{align*}
	M(t)=&\;\int_0^te^{(t-s)C}b'(Y(s))M(s)\ud s-\frac{T}{2}\int_0^te^{(t-s)C}b'(Y(s))CY(s)\ud s\\
	&\;-\frac{T}{2}\int_0^te^{(t-s)C}b'(Y(s))b(Y(s))\ud s-\frac{T}{2}\int_0^te^{(t-s)C}b'(Y(s))\ud W(s)\\
	&\;-\frac{\sqrt{3}T}{6}\int_0^te^{(t-s)C}b'(Y(s))\ud \widetilde{W}(s)-\frac{T}{4}\int_0^te^{(t-s)C}\sum_{i=1}^{d}\frac{\PD^2}{\PD x_i^2}b(Y(s))\ud s,~t\in[0,T],
\end{align*}
where $\widetilde{W}$ is a $d$-dimensional standard Brownian motion independent of $W$.
\end{cor}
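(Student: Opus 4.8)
The plan is to recognize \eqref{SODE} as the degenerate instance of \eqref{SPDE} obtained by taking $H=\mbb R^d$, $A=C$, $Q=I_d$ the identity matrix, and $F=b$, so that the $Q$-Wiener process $W^Q$ reduces to the standard $d$-dimensional Brownian motion $W$ and $X^m$ reduces to $Y^m$. Once Assumptions \ref{assum1} and \ref{assum2} are checked in this setting, Theorem \ref{maintheorem} applies directly and yields $m(Y^m(t)-Y(t))\overset{d}{\Rightarrow}M(t)$; it then only remains to translate the abstract limit equation \eqref{U} into the stated finite-dimensional form.

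The verification of the assumptions is where the finite-dimensional structure does all the work. Since $-A=-C$ is positive definite on $\mbb R^d$, its eigenvalues $0<\lambda_1\le\cdots\le\lambda_d$ are finite in number, and every fractional space $\dot H^r=Dom((-C)^{r/2})$ coincides with $\mbb R^d$ equipped with a norm $\|\cdot\|_r$ equivalent to the Euclidean norm. Consequently $X_0=Y_0\in\dot H^\beta$ holds trivially for, say, $\beta=2$, and $\|(-C)^{(\beta-1)/2}Q^{1/2}\|_{\mcal L_2(H)}<\infty$ since every linear map on $\mbb R^d$ is Hilbert--Schmidt and $\tr(Q)=d<\infty$; hence Assumption \ref{assum1} is satisfied. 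For Assumption \ref{assum2} I would take $\eta=1$, $\delta=1$, and $\sigma=0$: the boundedness of $b'$ gives both the linear-growth and derivative bounds in \eqref{Fgrow}; \eqref{F'} follows from $\|b'(u)v\|\le L\|v\|$ together with norm equivalence and $(1+\|u\|_1)\ge 1$; the boundedness of $b''$ yields the bilinear bound \eqref{F''bound}; and the boundedness of $b'''$ makes $b''$ globally Lipschitz, giving \eqref{F''continuity}. All of these reduce to elementary estimates precisely because the $\dot H^r$-norms are mutually equivalent, so that $b$ being three times continuously differentiable with bounded derivatives suffices.

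The final step is to read off the limit. Each of the first five terms of \eqref{U} specializes verbatim, with $E(t-s)=e^{(t-s)C}$, $\mcal DF=b'$, $A=C$, $F=b$, and $W^Q,\widetilde W^Q$ replaced by $W,\widetilde W$. The only term requiring care is the second-order correction $-\frac{T}{4}\int_0^tE(t-s)\sum_{k=1}^\infty\mcal D^2F(X(s))(Q^{1/2}h_k,Q^{1/2}h_k)\ud s$: here $Q=I_d$ forces $Q^{1/2}h_k=h_k$ with $\{h_k\}_{k=1}^d$ the standard basis of $\mbb R^d$ and the sum finite, so that $\mcal D^2 b(Y(s))(h_k,h_k)=\frac{\PD^2}{\PD x_k^2}b(Y(s))$ and the term becomes $-\frac{T}{4}\int_0^te^{(t-s)C}\sum_{i=1}^d\frac{\PD^2}{\PD x_i^2}b(Y(s))\ud s$. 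I expect the assumption-checking to be routine and the correct identification of this correction term, together with confirming that $\widetilde W^Q$ collapses to an independent $d$-dimensional Brownian motion $\widetilde W$, to be the only places demanding attention; assembling these reproduces exactly the SODE for $M$, which completes the proof.
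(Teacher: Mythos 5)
Your proposal is correct and takes the same route as the paper, which simply observes that \eqref{SODE} is the degenerate case of \eqref{SPDE} with $H=\mbb R^d$, $A=C$, $Q=I_d$, $F=b$ and invokes Theorem \ref{maintheorem} directly. Your explicit verification of Assumptions \ref{assum1} and \ref{assum2} (with $\beta=2$, $\eta=\delta=1$, $\sigma=0$, using the equivalence of all $\dot H^r$-norms in finite dimensions) and the identification $\mcal D^2 b(Y(s))(h_k,h_k)=\frac{\PD^2}{\PD x_k^2}b(Y(s))$ are exactly the details the paper leaves implicit.
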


\begin{rem}
In Corollary \ref{Cor2}, we require $C$ to be a negative definite matrix. This corresponds to the setting in Section \ref{Sec2} where $A$ is a negative definite operator. We believe that Corollary \ref{Cor2} still holds, when $C$ is relaxed to any constant matrix.  This may be done by directly computing the limit distribution of $m(Y^m(t)-Y(t))$, instead of applying Theorem \ref{maintheorem}. Especially, we guess that when $C=0$, $M(t)$ is nothing but the asymptotic error distribution  of the Euler--Maruyama method for  \eqref{SODE}.
\end{rem}

\section{Concluding remarks}\label{Sec6}
In this paper, we establish the asymptotic error distribution of the temporal AEE method applied to  parabolic SPDEs with additive noise. To deal with the infinite-dimensional problem, we provide a uniform approximation theorem for convergence in distribution, and obtain the limit distribution of the normalized error process by studying that of its finite-dimensional approximation process. It turns out that the limit  process satisfies a linear SPDE. Our approach to deriving the asymptotic error distribution of numerical methods based on the approximation argument could potentially extend to other numerical methods for SPDEs driven by $Q$-Wiener processes. 
We finally conclude the paper with some future aspects of interest.
\begin{itemize}
\item In the case of SODEs, the normalized error process of many numerical methods can be shown to converge in distribution in $\mbf C([0,T];\mbb R^d)$. 
In the present paper, the asymptotic error distribution of the temporal AEE method is established for given $t\in[0,T]$. 
To further improve the result in $\mbf C([0,T];H)$ using Theorem \ref{uniform approximation} based on the current approximation $\widetilde{U}^{m,n}$, the result in Lemma \ref{Utildemn-Utildem} needs to be enhanced to
\[
\lim\limits_{n\to\infty}\sup_{m\ge 1}\mbf E\|\widetilde{U}^{m,n}-\widetilde{U}^m\|^2_{\mbf C([0,T];H)}=0,
\]
which is not a trivial extension.

\item The current result is given for the case of trace class noise with $\beta>1$ in Assumption \ref{assum1}. When $\beta\in(0,1]$, under the assumptions given in \cite{WangQi2015}, the temporal AEE method admits the mean-square order $\beta$. 
It implies that the normalized error process $U^m$ should be replaced by $m^{\beta}(X^m(t)-X(t))$, whose limit distribution is still unclear.
The main challenge through the current approach lies in the verification of Lemma \ref{euqiv} for $\beta\in(0,1]$.
 
 \item Corollary \ref{Cor1} gives the asymptotic error distribution of the fully discrete AEE method, where we set $n=\lfloor m^\iota\rfloor$ and require $\iota>\frac{2}{\alpha\beta}$ to ensure that the right-hand side of \eqref{sec5eq1} converges to $0$. When $\iota\le\frac{2}{\alpha\beta}$, Corollary \ref{Cor1} would be invalid and its study is more complicated due to the interaction of the spatial and temporal discretizations.
\end{itemize}

\bibliographystyle{plain}

\end{document}